\theoremstyle{plain}
\newtheorem{thm}{Theorem}[section]
\newtheorem{lem}[thm]{Lemma}
\newtheorem{cor}[thm]{Corollary}
\newtheorem{prop}[thm]{Proposition}
\newtheorem{conj}{Conjecture}
\theoremstyle{definition} \theoremstyle{definition}
\newtheorem{rem}[thm]{Remark}           %\renewcommand{\theremark}{}
\theoremstyle{remark}
\newtheorem*{claim}{Claim}
\newcommand{\A}{\mathbb{A}}
\newcommand{\Q}{\mathbb{Q}}
\newcommand{\U}{\mathcal{U}}
\newcommand{\Z}{\mathbb{Z}}
\newcommand{\R}{\mathbb{R}}
\newcommand{\C}{\mathbb{C}}
\newcommand{\Hom}{{\rm Hom}\,}
\def\M{{\rm M}}
\def\G{{\rm G}}
\def\SL{{\rm SL}}
\def\GSp{{\rm GSp}}
\def\PGSp{{\rm PGSp}}
\def\Sp{{\rm Sp}}
\def\GU{{\rm GU}}
\def\U{{\rm U}}
\def\GO{{\rm GO}}
\def\GL{{\rm GL}}
\def\PGL{{\rm PGL}}
\def\GSO{{\rm GSO}}
\def\SO{{\rm SO}}
\def\O{{\rm O}}
\def\tr{{\rm tr\,}}
\begin{document}

\title{Bessel Models for \GSp(4)}
\author{Dipendra Prasad and Ramin Takloo-Bighash}

\vspace{2mm}

\dedicatory{\hfill To Steve Gelbart}
\address{D. P.: School of Mathematics, Tata Institute of Fundamental
Research, Colaba, Mumbai-400005, INDIA}
\email{dprasad@math.tifr.res.in}
\address{R. T.-B.: Department of Mathematics, Statistics, and Computer Science, University of Illinois at Chicago,
851 S. Morgan St, Chicago, IL 60607} \email{rtakloo@math.uic.edu}
\thanks{
The first author thanks the Institute for Advanced Study, as well as the
University of California at San Diego, where
this work was done, and gratefully acknowledges receiving support through
 grants to the Institute by the Friends of
the Institute, and the von Neumann Fund. The second author is
partially supported by a Young Investigator's Award from the NSA and
a grant from the NSF. We are grateful to W.T. Gan for many
suggestions; in fact, his help has been invaluable. Both of the
authors have received warm encouragement from Steve Gelbart at
various stages and would like to dedicate this work to him.}
\keywords{Bessel models, Weil representation, Theta correspondence, 
local epsilon factors, central critical $L$-value}
\maketitle
\begin{abstract}
Methods of theta correspondence are used to analyse local and global 
Bessel models  for $\GSp_4$ proving a conjecture of Gross and Prasad 
which describes these models in terms of  local epsilon factors in the 
local case, and  the non-vanishing 
of central critical $L$-value in the global case. 
\end{abstract}

\tableofcontents

\section{Introduction}
In this paper we will use the methods of theta correspondence to
prove certain  local and global conjectures of Gross-Prasad for the
pair $(\SO(2), \SO(5)) $ in some cases which amounts to the
understanding of Bessel models for representations of $\SO(5)$ by
reducing this question to one about branching laws  involving
simpler pairs for which the analogous question is known. These 
conjectures  relate the 
existence of Bessel models (which are certain Fourier coefficients) 
to certain local epsilon factors in the local case, and to the non-vanishing 
of certain central critical $L$-value in the global case. 
Instead of
$\SO(5)$ we will consider the related group $\GSp_4$ in this paper.
This gives first nontrivial evidence to the conjectures of
Gross-Prasad in which the subgroup considered is neither reductive,
nor unipotent.

Among the earliest manifestations of the methods that we follow in
this paper is the work of Waldspurger on Shimura correspondence in
the late 70's relating period integral of automorphic forms on
$\PGL_2$ over tori to Fourier coefficients of automorphic forms on
the metaplectic $\SL_2$-- both being related to twisted $L$-values
at $1/2$.

Let $W$ be a four dimensional symplectic vector space over a local
field $k$. Let $W_1$ be a maximal isotropic subspace of $W$. Let
$G=\GSp(W)$ denote the symplectic similitude group of $W$, and $P$
the parabolic subgroup of $G$ consisting of elements of $G$ which
take $W_1$ into itself. The group $P$ is the so-called Siegel
parabolic which has the Levi decomposition $P=MN$, where $M \cong
\GL_2 \times {\Bbb G}_m $, and $N$ is an  abelian group which can be
identified to $2 \times 2$ symmetric matrices ${\rm Sym}_2(k)$ over
$k$. An element $(g,\lambda) \in  \GL_2 \times {\Bbb G}_m $ acts on
$n \in N$ by $\lambda gn{}^tg$.  It follows that the stabiliser in
$M$ of a nondegenerate symmetric matrix  in $N$ can be identified to
 the normalizer of a Cartan subgroup of $\GL_2$. Fix $\psi_{00}: k \rightarrow
{\Bbb C}^*$ to be a nontrivial additive character of $k$. Let
$\psi_0:N \cong {\rm Sym}_2(k) \rightarrow {\Bbb C}^*$  be
$\psi_0(n) = \psi_{00}({\rm trace}( n))$. Any character of $N$ is of
the form $\psi(n) = \psi_0(sn)$ for some $s \in {\rm Sym}_2(k)$, and
the corresponding  subgroup $T = T_s$ of $\GL_2(k)$ is
\begin{equation*}
T = \{ g \in \GL_2(k)\, \vert \, \,^t g s g = \det g. s \}.
\end{equation*}
We consider $T$ as a subgroup of $\GSp_4(k)$ via
\begin{equation*}
g \mapsto \begin{pmatrix} g \\ & \det g. \,^t g ^{-1}
\end{pmatrix}.
\end{equation*}

Let $\pi$ be an irreducible admissible representation of $\GSp(W)$.
Let $\pi_\psi$ denote the largest quotient of $\pi$ on which $N$
operates by $\psi$. Clearly $\pi_\psi$ is a representation space for
the subgroup $M^\psi$ of  $M$ which stabilizes $\psi$. We will
consider $\psi = \psi_0(gn)$ corresponding to a $g \in {\rm
Sym}_2(k)$ with $\det g \not = 0$. For such $\psi$, $M^\psi$ is
isomorphic to the normaliser $N(T)$ of a Cartan subgroup $T$ of
$\GL_2$. The question that we study in this paper is the structure
of $\pi_\psi$ as a module for $T$, called the Bessel model of $\pi$,
both locally as well as globally for certain representations $\pi$
of $\GSp(W)$ which will be the set of `most' irreducible
representations of $\GSp(W)$; in the global context, we will be able
to answer this question only for a small class of  automorphic
representations of $\GSp(W)$.

We will work simultaneously with the rank 1 form of $\GSp_4(k)$,  to
be denoted by $\GSp_D(4)$, and  defined using a quaternion division
algebra $D$ over $k$  as
$$\left \{ g \in \GL_2(D) | g
\left ( \begin{array}{cc} 0 & 1 \\ 1 & 0 \end{array} \right ){}^t\bar{g}
= \lambda \cdot  \left ( \begin{array}{cc} 0 & 1
\\ 1 & 0 \end{array} \right ) , \lambda \in k^* \right \}$$
where for $g = \left ( \begin{array}{cc} a & b \\ c & d \end{array} \right )
\in \GL_2(D), {}^t\bar{g} = \left ( \begin{array}{cc} \bar{a} & \bar{c} \\
\bar{b} & \bar{d} \end{array} \right )$, and where $a \rightarrow
\bar{a}$ denotes the standard involution on $D$. The group
$\GSp_D(4)$ contains the Siegel parabolic whose unipotent radical is
the group of matrices
$$\left ( \begin{array}{cc} 1 & n \\ 0 & 1 \end{array} \right )$$
where $n \in D$ with $n + \bar{n} = 0$, and the Levi subgroup is
isomorphic to $D^* \times k^*$ embedded in $\GSp_D(4)$ as
$$\left ( \begin{array}{cc} d & 0 \\ 0 & t d \end{array} \right )$$
for $d \in D^*,$ and $t \in k^*$.

We begin by recalling  the following multiplicity 1 theorem of
Novodvorsky extended in two ways. First we consider $\GSp_4(k)$
instead of his $\PGSp_4(k)$, and then we also consider rank 1 form
of $\GSp_4(k)$. Both these are standard extensions of the arguments
in Novodvorsky.

\begin{thm}\label{thm1.1}
Let $\pi$ be an irreducible admissible representation of either
$\GSp_4(k)$, or $\GSp_D(4)$ with Siegel parabolic $P=MN$. 
Let $K$ be a quadratic algebra over
$k$, and $\chi$ a character of $K^*$. Let $\psi: N(k) \rightarrow
{\Bbb C}^*$ be a nondegenerate character of $N(k)$ centralized by
$K^*$, so that one can construct a one dimensional representation of
$R= K^* N(k)$ which is $\chi $ on $K^*$, and $\psi$ on $N(k)$, which
will also be denoted by $\chi$ as $\psi$ will be kept fixed in this
paper. Then
$${\rm dim}{\rm Hom}_R(\pi, \chi) \leq 1.$$

\end{thm}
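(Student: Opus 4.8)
The plan is to follow the Gelfand–Kazhdan / Bernstein method: realize the desired Hom space as a space of distributions on the group and show that such distributions are invariant under a suitable anti-automorphism, which forces them to be at most one-dimensional.

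\medskip

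\textbf{Step 1: Reduce to a statement about distributions.} First I would note that $\mathrm{Hom}_R(\pi,\chi)$ embeds, by Frobenius reciprocity and duality, into a space of $(R,\chi)$-equivariant distributions on $G = \GSp_4(k)$ (or $\GSp_D(4)$), or more precisely into $\mathrm{Hom}_{R\times R}(C_c^\infty(G), \chi^{-1}\boxtimes\chi)$ after tensoring with the contragredient; the usual argument shows $\dim\mathrm{Hom}_R(\pi,\chi)\cdot\dim\mathrm{Hom}_R(\tilde\pi,\chi^{-1})$ is bounded by the dimension of the bi-equivariant distribution space. So it suffices to prove that every distribution $D$ on $G$ satisfying $D(\rho(r_1)\lambda(r_2)f) = \chi(r_1)\chi(r_2)^{-1} D(f)$ is invariant under the anti-involution $g\mapsto g^{\iota}$ for a carefully chosen $\iota$ (e.g.\ an outer twist of $g\mapsto g^{-1}$ preserving $M$, $N$, and normalizing $K^*$), because then the double-coset argument gives multiplicity $\le 1$.

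\medskip

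\textbf{Step 2: Analyze the $R\times R$ double cosets in $G$.} The heart of the argument is a geometric analysis of the orbits of $R\times R$ acting on $G$ by left and right translation, using the Bruhat decomposition of $G$ relative to the Siegel parabolic $P=MN$. Since $R = K^* N$ with $K^*\subset M$, the relevant combinatorics reduces to understanding $P\backslash G/P$ (a short list of Weyl cosets) and then, inside each piece, the orbits of $(K^*N)\times(K^*N)$. For each double coset $RgR$ one must check that either the stabilizer has no nonzero $(\chi,\chi^{-1})$-equivariant functional on the corresponding distributions (the coset contributes nothing), or the coset is stable under $g\mapsto g^\iota$ so that the anti-involution acts trivially on distributions supported there. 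The open coset is handled by an explicit matrix computation showing $\iota$ fixes it; the key point for the boundary strata is that a nondegenerate character $\psi$ of $N$ is involved, and the unipotent integration against $\psi$ kills contributions from cosets where $N$ (or a piece of it) acts by a nontrivial character — this is exactly where nondegeneracy of $\psi$ enters, as in Novodvorsky's original argument.

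\medskip

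\textbf{Step 3: Uniformity across the two forms and transversality.} Both $\GSp_4(k)$ and $\GSp_D(4)$ have the same Siegel parabolic structure ($M\cong \GL_2\times\mathbb{G}_m$ resp.\ $D^*\times k^*$, with $N$ a space of symmetric/anti-symmetric elements), and the Bruhat cell structure relative to the Siegel parabolic is parallel, so the same orbit bookkeeping works verbatim with $\GL_2$ replaced by $D^*$; I would simply remark this rather than redo it. One technical point requiring care is that the distribution-theoretic argument needs the orbit space to be "nice" — one must stratify $G$ into finitely many $R\times R$-stable locally closed pieces with the open one contributing the unique functional, and invoke the standard fact (Bernstein's localization principle) that equivariant distributions are controlled by their restrictions to the strata. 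The \emph{main obstacle} I anticipate is the combinatorial/geometric bookkeeping of the non-open $R\times R$ double cosets: unlike the classical Gelfand–Kazhdan setup where one subgroup is the full unipotent radical of a Borel, here $R$ is a mixed reductive-times-unipotent group ($K^*\ltimes N$), so the orbit decomposition is more delicate and one must verify, case by case, the vanishing or $\iota$-stability on each stratum, keeping track of how $\chi$ on $K^*$ and $\psi$ on $N$ interact. Since the paper says these are "standard extensions of the arguments in Novodvorsky," I expect the resolution to be a bookkeeping exercise following his template, with the rank-$1$ form adding only notational overhead.
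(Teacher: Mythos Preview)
The paper does not actually give a proof of this theorem: it simply states that it is the multiplicity one theorem of Novodvorsky, extended from $\PGSp_4(k)$ to $\GSp_4(k)$ and to the inner form $\GSp_D(4)$, and declares both extensions to be ``standard extensions of the arguments in Novodvorsky.'' No details are supplied.

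Your proposal is a correct outline of precisely the Novodvorsky/Gelfand--Kazhdan argument that the paper is gesturing at, so in that sense you are on the same track as what the authors have in mind. The reduction to bi-equivariant distributions, the choice of an anti-involution stabilizing $R$ and the character data, the stratification by $R\times R$-double cosets via the Bruhat decomposition relative to the Siegel parabolic, and the use of nondegeneracy of $\psi$ to kill boundary contributions are exactly the ingredients of Novodvorsky's original proof; your remark that the inner form adds only notational overhead (same Siegel parabolic structure, same cell combinatorics) is also the reason the paper is willing to call the extension ``standard.'' One small point worth tightening if you actually carry this out: the Gelfand--Kazhdan criterion in its usual form gives $\dim\mathrm{Hom}_R(\pi,\chi)\cdot\dim\mathrm{Hom}_R(\tilde\pi,\chi^{-1})\le 1$, so to conclude $\dim\mathrm{Hom}_R(\pi,\chi)\le 1$ you need to know that the anti-involution takes $\pi$ to $\tilde\pi$ and $\chi$ to $\chi^{-1}$ compatibly, or else argue directly that nonvanishing of one Hom implies nonvanishing of the other; this is routine here but should be said. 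Otherwise your sketch is sound and matches the intended (but unwritten) argument.
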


\begin{rem} If ${\rm Hom}_R(\pi, \chi) \neq 0,$ then the representation
$\pi$ is said to have Bessel model for the character $\chi$ of
$K^*$.
\end{rem}

Before we  state the Gross-Prasad conjecture in this context, we remind
ourselves that the Langlands parameter of a representation $\pi$
 of $\GSp_4(k)$
is a  representation
$$\sigma_{\pi}: W'_k\rightarrow \GSp_4({\Bbb C})$$ where $W'_k$
is the Weil-Deligne group of $k$ which we take to be $W'_k =W_k
\times \SL_2({\Bbb C})$. These  have been constructed in a recent
paper of Gan and Takeda who have also defined a notion of
$L$-packets for representations of $\GSp_4(k)$ (of size 1 or 2)
which is what we will use in this paper.

\vspace{2mm}

\begin{conj}\label{conj1}

Let $K$ be a quadratic algebra over $k$ such that $K^* \subset
\GL_2(k)$ is contained in the centralizer of a nondegenerate
character $\psi: N(k) \rightarrow {\Bbb C}^*$.  Let $\chi: K^*
\rightarrow {\Bbb C}^*$ be a character, and let
 $\chi$ also denote the
corresponding character of $R=K^*N(k)$ which is $\chi$ on $K^*$ and
$\psi$ on $N(k)$. Let $\{\pi\}$ be an irreducible, admissible
generic $L$-packet of representations of $\GSp_4(k)$ with Langlands
parameter  $\tau$. Assume that the central character of $\{ \pi \}$
is the same as $\chi|_{k^*}$. Let  $\GSp_D(4)$ be the rank 1 form of
$\GSp_4(k)$, and $\{\pi'\}$  an irreducible, admissible generic
$L$-packet of representations of $\GSp_D(4)$ with Langlands
parameter $\tau$. (So $\{\pi'\}$ might be an empty set.) Then there
is at most one  representation $\pi \in \{\pi\}$  with ${\rm
Hom}_R(\pi_{\psi}, \chi) \not = 0$, and there is one if and only if
$\epsilon( \tau \otimes {\rm { Ind}_K^k}(\chi^{-1})) = 1. $
Similarly, there is at most one  representation $\pi' \in \{\pi'\}$
with ${\rm Hom}_R(\pi'_{\psi}, \chi) \not = 0$, and there is one  if
and only if $\epsilon( \tau \otimes {\rm { Ind}_K^k}(\chi^{-1})) =
-1.$ Furthermore, if $\{\pi\}$ or $\{\pi'\}$ consisted of more than
one element, then the parameter $\tau$ with values in $\GSp_4({\Bbb
C})$ becomes a sum of representations $\tau = \tau_1 \oplus \tau_2$,
and one can make precise which element of the $L$-packet, $\{\pi\}$
or $\{\pi'\}$ carries the $\chi$-invariant form on the subgroup $R$.

\end{conj}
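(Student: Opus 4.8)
\section{Proof strategy for Conjecture \ref{conj1}}

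The plan is to prove the conjecture by transporting the Bessel branching problem for $\GSp_4$ (and its inner form $\GSp_D(4)$) to a branching problem for the pair $(\SO(2),\SO(5))$ via the exceptional isomorphism $\PGSp_4 \cong \SO(5)$, and from there to a branching problem for lower-rank classical groups where the Gross--Prasad conjecture is already a theorem. Concretely, let $\tau = \Ind_K^k(\chi^{-1})$ and observe that $\tau$ is a $2$-dimensional representation of $W_k'$, whose associated similitude orthogonal group $\GO(V_\tau)$ with $\dim V_\tau = 2$ contains the torus $K^*$. One then exploits the see-saw pair relating $\GSp_4 \times \GO_2$ and $\GO_4 \times \Sp_2$ (equivalently $\GL_2 \times \GL_2$ after splitting similitudes): theta lifting from $\GSp(W)$ to $\GO(V_\tau)$, restricted to $K^* N(k)$, converts the Hom-space $\Hom_R(\pi_\psi, \chi)$ into a Waldspurger-type toric period of the theta lift of $\pi$ (an automorphic or admissible form built from $\GL_2$-data). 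The first step is therefore to set up this see-saw carefully, tracking the nondegenerate character $\psi$ on $N(k)$: because $N \cong \Sym_2(k)$ and $\psi$ is pinned down by a nondegenerate $s \in \Sym_2(k)$, the Weil-representation Jacquet functor along $(N,\psi)$ computes the theta lift to a group determined by the quadratic space $(k^2, s)$, which is exactly the $2$-dimensional orthogonal space whose even Clifford algebra is $K$.

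The second step is to compute the theta lift $\Theta(\pi)$ explicitly in terms of the Langlands parameter $\tau$ of the $L$-packet $\{\pi\}$: using the Gan--Takeda parametrization quoted in the excerpt, and the known behaviour of theta correspondence for $(\GSp_4, \GO_n)$, one identifies which member of the $L$-packet lifts nontrivially to the split versus the nonsplit form of $\GO_4$, and what its parameter is. This is where the local epsilon factor enters: the classical dichotomy for theta lifts between $\Sp$ and the two forms of $\O$ (the Kudla--Rallis / Waldspurger sign), applied here in similitude form, says precisely that $\pi$ lifts to the form indexed by a sign $\varepsilon \in \{\pm 1\}$, and one shows this sign equals $\epsilon(\tau \otimes \Ind_K^k(\chi^{-1}))$ by a root-number computation — reducing via inductivity of epsilon factors to the $\GL_2 \times \GL_2$ Rankin--Selberg epsilon factor, for which the relevant formula (Tunnell--Saito, in its epsilon-factor form) is available. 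The third step is the toric period: having realized $\Hom_R(\pi_\psi,\chi)$ as $\Hom_{K^*}(\Theta(\pi), \chi)$, invoke Tunnell--Saito / Waldspurger for $(\GL_2, K^*)$, which gives nonvanishing iff a local epsilon factor equals $+1$ and picks out $\GL_2(k)$ versus $D^*$ — this is exactly the sought dichotomy between $\GSp_4(k)$ and $\GSp_D(4)$, and the comparison with the inner form of $\GL_2$ matches the two forms $\GSp_4$ and $\GSp_D(4)$ under the theta correspondence.

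The final step, identifying which element of a size-$2$ packet carries the functional when $\tau = \tau_1 \oplus \tau_2$ decomposes, follows by making the previous steps equivariant for the component group: the two members of the $L$-packet correspond to the two characters of the component group $(\Z/2\Z)$ of the centralizer of $\tau$ in $\GSp_4(\C)$, and the theta lift intertwines this with the corresponding dichotomy on the $\GL_2$ (resp.\ $D^*$) side, where Tunnell--Saito gives the answer character-by-character; one reads off that the distinguished member is the one whose label matches the product of local epsilon factors $\epsilon(\tau_1 \otimes \Ind_K^k(\chi^{-1}))$ (and similarly for $\tau_2$). The main obstacle I expect is Step 2: controlling the theta correspondence for $\GSp_4$ precisely on the nose at the level of $L$-packets and inner forms — in particular proving the nonvanishing of the relevant theta lift (so that the Hom-space is not killed for trivial reasons) and matching the Kudla--Rallis sign with the arithmetic epsilon factor — since this requires the full strength of the Gan--Takeda classification together with a careful similitude version of the conservation/dichotomy theorems, and the global case additionally needs a Rallis-type inner product formula relating the global theta lift's Petersson norm to the central $L$-value $L(1/2, \tau \otimes \Ind_K^k(\chi^{-1}))$.
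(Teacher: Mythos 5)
Your big-picture intuition---reduce via theta correspondence to Tunnell--Saito dichotomy for $\GL_2$ and its inner form---is philosophically what the paper does, but the specific mechanism you propose doesn't match, and more seriously you have missed roughly half the argument.

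\textbf{The theta mechanism is misidentified.} You propose theta-lifting $\pi$ \emph{from} $\GSp_4$ \emph{to} $\GO(V_\tau)$ with $\dim V_\tau = 2$, and invoking a see-saw between $(\GSp_4,\GO_2)$ and $(\GO_4,\Sp_2)$. The see-saw these dual pairs produce is along the inclusions $\Sp_2\subset\Sp_4$ and $\O_2\subset\O_4$, which does not encode the Bessel subgroup $R=K^*N$: the unipotent $N$ and the torus $K^*$ do not sit inside any of the see-saw partners in the right way. Moreover, for a generic $L$-packet of $\GSp_4$ the lift to $\GO_2$ will almost always vanish (this is far below the first occurrence in the Witt tower), so the hom-space would be killed for trivial reasons. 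The paper's actual mechanism is different and crucial: construct $\pi = \Theta(\tau)$ as a theta lift \emph{from} $\GSO(V)$ with $\dim V = 4$, and then compute the twisted Jacquet module $\omega_\psi$ of the big Weil representation directly along the nondegenerate character $\psi$ of $N$. This yields (Theorem \ref{thm4} and Corollaries \ref{corollary5}--\ref{corollary6}) an explicit description of $\pi_\psi$ as a module for $\GSO(W_1)\cong K^*$ in terms of the restriction of $\tau$ to $\G[\SO(W_1)\times\SO(W_1^\perp)]$, after which Frobenius reciprocity and Tunnell--Saito apply. You never arrive at this twisted-Jacquet computation, which is the heart of the theta portion of the argument.

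\textbf{You have omitted the entire principal-series branch of the proof.} Theta correspondence from $\GO(4)$ reaches only (essentially) supercuspidal representations --- and even there $\Theta(\tau)=\theta(\tau)$ needs to be guaranteed, which holds for supercuspidals but not in general. The conjecture, however, concerns all generic $L$-packets. For everything arising as a subquotient of a principal series the paper uses Mackey theory (double cosets $P\backslash G/P$ and $P\backslash G/Q$ for the Siegel and Klingen parabolics, Propositions \ref{prop2.1}--\ref{prop2.4}), combined with a separate root-number computation and the Standard Modules Theorem to reduce to tempered packets. None of this appears in your sketch, and it cannot be absorbed into theta correspondence: e.g.\ irreducible generic principal series from the Klingen parabolic are never theta lifts from $\GO(4)$. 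You should also be aware that the paper does \emph{not} prove the conjecture in full --- it explicitly cannot handle representations with Langlands parameter $\sigma\otimes\mathrm{St}_2$ for $\sigma$ a two-dimensional parameter with nontrivial self-twist; a plan claiming a complete proof needs to either solve this or note the limitation.

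\textbf{Minor points.} The multiplicity-one assertion in the conjecture does not follow from the epsilon-factor dichotomy; the paper invokes Novodvorsky's theorem (Theorem \ref{thm1.1}) as a prerequisite. Your final step on component groups is close in spirit to what happens (the $L$-packet is indexed by $\tau = \tau_1\oplus\tau_2$ and the member selected corresponds to the pair of Tunnell--Saito signs for $\tau_1,\tau_2$), but this too is deduced from the explicit twisted-Jacquet calculation, not from abstract equivariance for the component group. The Rallis inner product formula is not used for the local statement; it belongs to the global discussion.
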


The proof of this conjecture will be via the methods of theta
correspondence with $\GO(4)$ for which the conjectures of
Gross-Prasad (involving only reductive groups) are known. What makes 
this possible is the fact 
that in odd residue characteristic, every irreducible supercuspidal 
representation
of $\GSp_4(k)$ arises as a theta lift of a representation of
$\GO_4(k)$ for some four dimensional quadratic space. Having proved the 
conjecture for
$\GSp_4(k)$, one can actually 
bootstrap to deduce something for $\GL(4)$,
which we state now. (This theorem is proved in this paper for all
representations of $\GL_4(k)$ whose Langlands parameter is not of
the form $\sigma \otimes {\rm St}_2$ where $\sigma$ is a parameter
of a supercuspidal representation of $\GL_2(k)$ with a nontrivial
self-twist, and ${\rm St}_2$ is the parameter of the Steinberg
representation of $\PGL_2(k)$.)

\begin{thm}\label{thm1.3}
Let $D$ be a quaternion division algebra over a local field $k$, $K$
a quadratic separable algebra over $k$. Let $\pi$ be an irreducible,
admissible, generic representation of $\GL_4(k)$ with central
character $\omega_\pi$. Let $\chi$ be a character of $K^*$, also
considered as a character of $\GL_2(K)$ via the determinant map
$\det: \GL_2(K)  \rightarrow K^*$, such that $\chi^2|_{k^*} =
\omega_{\pi}$. Then the character $\chi$ of $\GL_2(K)$ appears as a
quotient in $\pi$ restricted to $\GL_2(K)$  if and only if

\begin{enumerate}

\item The Langlands parameter of $\pi$ takes values in $\GSp_4({\Bbb C})$ with similitude factor $\chi|_{k^*}$.

\item The epsilon factor
$\epsilon( \pi \otimes {\rm { Ind}_K^k}(\chi^{-1})) = 1. $
\end{enumerate}

Similarly, assuming that $\pi$ can be transferred to a
representation $\pi'$ of $\GL_2(D)$, and that $K$ is a quadratic
field extension of $k$ so that $\GL_2(K)$ embeds into $\GL_2(D)$,
the character $\chi$ of $\GL_2(K)$ appears in $\pi'$ restricted to
$\GL_2(K)$ as a quotient if and only if

\begin{enumerate}

\item The Langlands parameter of $\pi$ takes values in $\GSp_4({\Bbb C})$ with
similitude factor $\chi|_{k^*}$.

\item The epsilon factor
$\epsilon( \pi \otimes {\rm { Ind}_K^k}(\chi^{-1})) = -1. $
\end{enumerate}

\end{thm}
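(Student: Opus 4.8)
The plan is to deduce Theorem \ref{thm1.3} from Conjecture \ref{conj1} (proved in the body of the paper for $\GSp_4$) by the ``bootstrapping'' indicated in the introduction: pass from $\GL_4(k)$ to $\GSp_4(k)$ by means of the Siegel parabolic of $\GSp_4$ and the fact that $\GL_2(K)$ sits inside $\GL_4(k)$ as (a twist of) the group $K^* N(k)$ appearing in the Bessel problem. The first step is the linear-algebra reconciliation: if $K$ is a quadratic separable algebra over $k$, then $\GL_2(K)$ embedded in $\GL_4(k)$ decomposes, with respect to a suitable symplectic form, as $K^*$ times a unipotent piece, and I would check that this unipotent piece is exactly the unipotent radical $N$ of the Siegel parabolic of some inner form of $\GSp_4$ and that the character $\chi$ of $\GL_2(K)$, pulled back via $\det$, restricts to a nondegenerate character $\psi$ of $N$ centralized by $K^*$, with $\chi|_{K^*}$ the Bessel character. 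Concretely, $\GL_2(K)$ preserves a symplectic form on $K^2 \cong k^4$ up to the similitude $N_{K/k}$, so a character $\chi$ with $\chi^2|_{k^*} = \omega_\pi$ forces the similitude factor to be $\chi|_{k^*}$; this is condition (1), and it is essentially forced by the requirement that the $\GL_2(K)$-period even make sense symplectically.

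Granting this dictionary, the key reduction step is: $\Hom_{\GL_2(K)}(\pi, \chi) \neq 0$ if and only if some constituent of $\pi|_{\GSp_4(k)}$ (after fixing the similitude/central-character compatibility) has a $(K^*, \psi, \chi)$-Bessel model, i.e.\ $\Hom_R(\sigma_\psi, \chi) \neq 0$ for some $\sigma$ in the restriction of $\pi$ to $\GSp_4(k)$. Here one uses that $\GL_4(k)$ and $\GSp_4(k)$ share the group $R = K^* N(k)$, and that the restriction of a generic representation of $\GL_4$ to $\GSp_4$ is a sum (or integral) over the $L$-packet attached to the parameter $\tau$ viewed in $\GSp_4(\C)$ — this is where condition (1) re-enters, since $\Hom_{\GL_2(K)}(\pi,\chi)$ being nonzero and the symplectic constraint forces $\tau$ into $\GSp_4(\C)$. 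Then Conjecture \ref{conj1} (now a theorem) says exactly that among the members of the two generic $L$-packets on $\GSp_4(k)$ and on $\GSp_D(4)$ sharing parameter $\tau$, exactly one carries the Bessel functional, on the split form iff $\epsilon(\tau \otimes \Ind_K^k(\chi^{-1})) = 1$ and on the quaternionic form iff this sign is $-1$; translating $\tau$ back to the parameter of $\pi$ (they are the same representation of $W'_k$, now landing in $\GSp_4(\C) \subset \GL_4(\C)$) turns this into the stated epsilon-factor dichotomy, and the $\GL_2(D)$ statement comes from the same argument applied to the inner form $\GSp_D(4)$ with $\GL_2(K) \hookrightarrow \GL_2(D)$ for $K$ a field.

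I would organize the write-up as: (i) the symplectic embedding of $\GL_2(K)$ and identification of $R$ inside both $\GL_4(k)$ and the relevant $\GSp$; (ii) a Frobenius-reciprocity/exactness argument reducing the $\GL_2(K)$-branching to $\GSp_4$-Bessel models summed over the $L$-packet, including the verification that condition (1) is necessary; (iii) invocation of the main theorem (Conjecture \ref{conj1}) to extract the epsilon sign, plus the standard fact that $\Ind_{\GL_2(K)}^{\GL_4}$-type periods are governed by $\tau \otimes \Ind_K^k(\chi^{-1})$; (iv) the parallel treatment for $\GL_2(D)$ via $\GSp_D(4)$; and (v) isolation of the excluded parameters $\sigma \otimes {\rm St}_2$ with $\sigma$ having a nontrivial self-twist, for which the $L$-packet structure on $\GSp_4$ is not covered by the earlier results.

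The main obstacle I expect is step (ii): making precise the claim that the $\GL_2(K)$-distinguishedness of $\pi$ is equivalent to Bessel-distinguishedness of a member of the $\GSp_4$-packet, because this requires controlling the restriction of $\pi|_{\GSp_4(k)}$ — which member of the $L$-packet appears, and with what multiplicity — together with the transfer of $\pi$ to $\GL_2(D)$, which only exists under a hypothesis (itself a Jacquet–Langlands-type statement for $\GL_4$ vs $\GL_2(D)$). The excluded case, where the parameter is $\sigma \otimes {\rm St}_2$ with $\sigma$ carrying a nontrivial self-twist, is precisely where the Gan–Takeda $L$-packet on $\GSp_4$ and its lift to $\GL_4$ behave irregularly (the packet can be non-generic or the correspondence with $\GO_4$ degenerates), so the clean epsilon dichotomy would need a separate, and currently unavailable, analysis; hence the hypothesis in the theorem statement.
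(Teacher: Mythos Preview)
Your proposal rests on a linear-algebra identification that is false, and this derails the whole argument. You claim that $\GL_2(K)$, embedded in $\GL_4(k)$, ``decomposes, with respect to a suitable symplectic form, as $K^*$ times a unipotent piece'' identified with the unipotent radical $N$ of the Siegel parabolic of $\GSp_4$. But $\GL_2(K)$ is reductive of $k$-dimension $8$, while the Bessel subgroup $R = K^* \cdot N$ has $k$-dimension $2+3=5$; they cannot coincide. More fundamentally, $\GL_2(K)$ does not embed in $\GSp_4(k)$ at all: when $K = k \oplus k$ one has $\GL_2(K) \cong \GL_2(k)\times \GL_2(k)$, a group of rank $4$, whereas $\GSp_4(k)$ has rank $3$. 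So the chain $\GL_2(K) \subset \GSp_4(k) \subset \GL_4(k)$ on which your step (ii) is built does not exist, and there is no way to read the $\GL_2(K)$-period on $\GL_4$ directly as a Bessel period by restriction to $\GSp_4$.

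The paper's route is genuinely different: the bridge between $\GL_4$ and $\GSp_4$ is the \emph{theta correspondence} for the dual pair $(\GSp_4, \GO(6))$, using the exceptional isomorphism $\GSO(6) \cong [\GL_4(k)\times k^*]/\{(z,z^{-2})\}$. One embeds the $2$-dimensional quadratic space $K$ into a $6$-dimensional space $K \oplus K \oplus \mathbb{H}$ and applies Theorem~\ref{thm4} (the Bessel model of the Weil representation). The subgroup $\G[\SO(K)\times \SO(K\oplus \mathbb{H})]$ of $\GSO(6)$, after the identification $\GSO(K\oplus \mathbb{H}) \cong [\GL_2(K)\times k^*]/K^*$, becomes exactly $[\GL_2(K)\times k^*]/k^*$ sitting in $\GL_4(k)$ in the standard way, with projection to $\GSO(K)=K^*$ given by $(X,t)\mapsto t^{-1}\det X$. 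Thus the $\chi$-period of $\pi$ on $\GL_2(K)$ is detected, via Theorem~\ref{thm4}, by the $\chi$-Bessel model of the theta-source on $\GSp_4$, and Gan--Takeda guarantees that $\pi$ arises as such a theta lift precisely when its parameter lands in $\GSp_4(\mathbb{C})$. The epsilon condition then comes from Conjecture~\ref{conj1} on the $\GSp_4$ side. For non-supercuspidal $\pi$ one supplements this with Mackey theory for the $(2,2)$-parabolic, and the excluded parameters $\sigma \otimes {\rm St}_2$ with a nontrivial self-twist are exactly those where the $\GSp_4$ analysis is incomplete. Your instinct that the result is a bootstrap from Conjecture~\ref{conj1} is correct, but the mechanism is theta lifting to $\GO(6)$, not restriction to $\GSp_4$.
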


In this paper, we will be considering representations of $\GSp(W)$
which arise by theta lifting from representations of $\GSO(V)$,
where $V$ is a quadratic space of dimension 4. Recall that $\GO(V)$
contains a subgroup that we denote by $\GSO(V)$, of index 2, which
is the connected component of the identity of $\GO(V)$. It is
well-known that $\GSO(V)$ has the structure of one of the following
groups:

\begin{enumerate}
\item $\GSO(V^s) \cong (\GL_2(k) \times \GL_2(k) )/\Delta k^*.$

\item  $\GSO(V^a) \cong (D^* \times D^*)/\Delta k^*. $

\item $\GSO(V^d) \cong [\GL_2(E) \times k^*]/\Delta E^*,$
\end{enumerate}
where $\Delta k^* = k^*$ sits as $(t,t^{-1})$, and
$\Delta E^* = E^*$ sits inside $\GL_2(E) \times k^*$ via its natural
embedding in $\GL_2(E)$, and in $k^*$ by the norm mapping; further,
we have used $V^s$ to denote the unique four dimensional split
quadratic space, $V^a$ to denote the unique anisotropic quadratic
space of dimension 4, and $V^d$ is one of the two quadratic spaces
of rank 1 with discriminant algebra $E$, a quadratic field extension
of $k$.

Let $\tau$ be an irreducible representation of $(\GL_2(k) \times \GL_2(k)
)/ \Delta k^*$ of the form $\tau \cong \tau_1 \boxtimes \tau_2$
for irreducible admissible representations $\tau_1$ and $\tau_2$ of
$\GL_2(k)$ such that  their central characters are the same. 
An irreducible representation of $(D^* \times D^*)/\Delta
k^*$ will also be written as $\tau \cong \tau_1 \boxtimes
\tau_2$. For $\tau$ an irreducible representation of $\GSO(V^s)$,
let $\pi_1 = \Theta(\tau)$ be the theta lift of $\tau$. If $\tau$ is
a discrete series representation of $\GSO(V^s)$, we let $\tau'$
denote the representation of $\GSO(V^a)$ obtained by the
Jacquet-Langlands correspondence. Let $\pi_2 = \Theta(\tau')$ be the
theta lift of $\tau'$.

We  define $\GO_D(4)$ to be
$$\left \{ g \in \GL_2(D) | g
\left ( \begin{array}{cc} 0 & 1 \\ -1 & 0 \end{array} \right )
{}^t\bar{g}
= \lambda \cdot  \left ( \begin{array}{cc} 0 & 1
\\ -1 & 0 \end{array} \right ) \right \}.$$
It is clear that $\GO_D(4)$ contains the matrices
$$\left ( \begin{array}{cc} d & 0 \\ 0 & d
\end{array} \right )$$ for $d \in D^*$,
as well as $\GL_2(k)$. From this it is easy to see that in fact
$\GO_D(4) \cong [D^* \times \GL_2(k)]/ \Delta k^*$. One can
define $\GO_D(4)$ more generally by replacing the matrix $\left (
\begin{array}{cc} 0 & 1 \\ -1 & 0 \end{array} \right )$ by a general
skew-Hermitian matrix in $\GL_2(D)$.  By generality, it is known
that over a non-Archimedean  local field $k$, a skew-Hermitian form
in $\GL_2(D)$ is classified by its discriminant which is an element
of $k^*/k^{*2}$ which since we are classifying inner forms of
$\GSO(V^s)$, we take to be 1, hence the skew-Hermitian form $\left (
\begin{array}{cc} 0 & 1 \\ -1 & 0 \end{array} \right ) $ is the only
relevant one at this point. For construction of inner forms of
$\GSO(V^d)$, other discriminants will become relevant.

It is well-known that $(\GSp_4, \GSO(V^s))$, $(\GSp_4, \GSO(V^a))$,
as well as  $(\GSp_D(4), \GO_D(4))$ can be considered as dual
reductive pairs, and one can use theta liftings to obtain
representations of $\GSp_D(4)$ from, in particular,
 those of $\GSO(V^s)$, and
$\GSO(V^a)$. Define $\pi_3 = \Theta(\tau)$, and $\pi_4 =
\Theta(\tau')$; where given an irreducible representation $\tau_1
\boxtimes \tau_2$ of $\GL_2(k) \times \GL_2(k)$, there are two
representations of $\GO_D(4) \cong D^* \times \GL_2(k)$ 
by using the
Jacquet-Langlands correspondence either on $\tau_1$ or on $\tau_2$;
if one of $\tau_1$ or $\tau_2$ is not a discrete series
representation, then one or both of $\tau$ and $\tau'$ could be
taken to be 0. If $\tau_1 = \tau_2$, then the representation
$\tau_1 \boxtimes JL(\tau_1)$ of $\GO_D(4) \cong \GL_2(k)  \times D^*$ 
does not lift to $\GSp_D(4)$.

We now state  one of our local theorems which is for those
representations of $\GSp_4(k)$ which arises from theta lifting from
$\GSO(V^s)$, or $\GSO(V^a)$ and for which we simultaneously have to
consider representations of $\GSp_D(4)$ arising from $\GO_D(4)$. We
will not state here what we prove for representations of $\GSp_4(k)$
arising from the dual pair $\GSO(V^d)$ for which we refer to Section
\ref{sect:applications}. There are other representations which
cannot be handled by the method of theta correspondence which in the
odd residue characteristic are certain subquotients of principal
series representations for which we use a combination of the method
of theta correspondence and the Mackey theory for the full induced
representations, taken up later in the paper.

\begin{thm}\label{thm1.4}
Let $K$ be a quadratic algebra over $k$ such that $K^* \subset
\GL_2(k)$ is contained in the centralizer of a nondegenerate
character $\psi: N(k) \rightarrow {\Bbb C}^*$.  Let $\chi: K^*
\rightarrow {\Bbb C}^*$ be a character, and let
 $\chi$ also denote the
corresponding character of $R=K^*N(k)$. Let $\tau_1$ and $\tau_2$ be
irreducible, admissible representations of $\GL_2(k)$ with the same
central characters which is $\chi|_{k^*}$, and $\pi_1$ the theta
lift to $\GSp_4(k)$ of the representation $\tau_1 \boxtimes \tau_2$ of
$\GSO(V^s)$. Let $\pi_2$ (resp. $\pi_3,\pi_4$) be representations of
$\GSp_4(k)$  (resp.  $\GSp_D(4)$) as defined earlier via theta lifts
from $\GSO(V^a)$ (resp. $\GO_D(4)$). (Observe that if $\tau_1$ or
$\tau_2$ is not a discrete series representation of $\GL_2(k)$, then
some of the representations $\pi_2, \pi_3,\pi_4$ may not be
defined; if $\tau_1 = \tau_2$, then $\pi_3=\pi_4=0$.) 
For the character $\chi$ of $K^*$, let ${\rm
Ind}_K^k(\chi)$ denote the two dimensional representation of the Weil
group of $k$ obtained by inducing the character $\chi$ of $K^*$.
With this  notation, we have
\begin{enumerate}
\item ${\rm Hom}_R(\pi_{1,\psi}, \chi) \not = 0$ if and only if
$$\epsilon( \tau_1 \otimes {\rm { Ind}_K^k}(\chi^{-1})) = \epsilon(\tau_2\otimes
{\rm { Ind}_K^k}(\chi^{-1})) = \omega_{K/k}(-1)\chi(-1).$$

\item ${\rm Hom}_R(\pi_{2,\psi}, \chi) \not = 0$ if and only if
$$\epsilon( \tau_1 \otimes {\rm { Ind}_K^k}(\chi^{-1})) = \epsilon(\tau_2\otimes
{\rm { Ind}_K^k}(\chi^{-1})) = -\omega_{K/k}(-1)\chi(-1).$$

\item ${\rm Hom}_R(\pi_{3,\psi}, \chi) \not = 0$ if and only if
$$\epsilon( \tau_1 \otimes {\rm { Ind}_K^k}(\chi^{-1})) = -\epsilon(\tau_2\otimes
{\rm { Ind}_K^k}(\chi^{-1})) = \omega_{K/k}(-1)\chi(-1).$$

\item ${\rm Hom}_R(\pi_{4,\psi}, \chi) \not = 0$ if and only if
$$\epsilon( \tau_1 \otimes {\rm { Ind}_K^k}(\chi^{-1})) = -\epsilon(\tau_2\otimes
{\rm { Ind}_K^k}(\chi^{-1})) = -\omega_{K/k}(-1)\chi(-1).$$

\end{enumerate}

\end{thm}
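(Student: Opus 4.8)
The plan is to reduce the computation of the Bessel model of $\pi_i$ over $R = K^* N(k)$ to a branching problem for the dual pair $\GO(V) \times \GSp(W)$, where the relevant restriction is now to a \emph{reductive} subgroup and hence is governed by the already-known Gross--Prasad conjecture for orthogonal groups in low rank (equivalently, for the pairs $(\SO(2),\SO(3))$ and $(\SO(2),\SO(4))$, i.e. Waldspurger's theorem and the Tunnell--Saito-type results, together with their $\GO$/$\GSO$ similitude upgrades). First I would recall the explicit seesaw diagram relating $(\GSp(W),\GO(V))$ with $(\GSp(W'),\GO(V'))$ for a suitable decomposition of the symplectic and quadratic spaces adapted to the character $\psi$ and the torus $K^* = T_s \subset \GL_2(k)$; the point is that the Siegel unipotent $N$ and the character $\psi$ translate, under the theta kernel, into a Fourier--Jacobi type coefficient on the $\GO(V)$ side, whose computation is a standard (if somewhat involved) manipulation of the Weil representation. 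Concretely, $\Hom_R(\pi_{i,\psi},\chi)\neq 0$ should be shown equivalent to $\Hom_{\Delta K^*}(\tau \otimes (\text{an explicit Weil-rep twist}),\chi)\neq 0$, which for $V^s$ with $\GSO(V^s)\cong(\GL_2\times\GL_2)/\Delta k^*$ and $\tau = \tau_1\boxtimes\tau_2$ unwinds to the simultaneous conditions $\Hom_{K^*}(\tau_1,\chi)\neq 0$ and $\Hom_{K^*}(\tau_2,\chi)\neq 0$ — i.e. a product of two $\GL_2$ branching problems.

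Next I would invoke the local Gross--Prasad / Tunnell--Saito theorem for $\GL_2$: for an irreducible admissible $\tau_j$ of $\GL_2(k)$ with central character $\chi|_{k^*}$ and a character $\chi$ of $K^*$, the space $\Hom_{K^*}(\tau_j,\chi)$ is nonzero if and only if $\epsilon(\tau_j\otimes\Ind_K^k(\chi^{-1})) = \omega_{K/k}(-1)\chi(-1)$, with the opposite sign $-\omega_{K/k}(-1)\chi(-1)$ selecting the Jacquet--Langlands transfer $JL(\tau_j)$ on $D^*$ instead. The four cases in the theorem then correspond exactly to the four ways of choosing, for each of $\tau_1$ and $\tau_2$, whether one uses the split form $\GL_2(k)$ (sign $+\omega_{K/k}(-1)\chi(-1)$) or the division form $D^*$ (sign $-\omega_{K/k}(-1)\chi(-1)$): $\pi_1$ corresponds to $(\GL_2,\GL_2)$, giving both $\epsilon$'s equal to $\omega_{K/k}(-1)\chi(-1)$; $\pi_2$ to $(D^*,D^*)$, giving both equal to $-\omega_{K/k}(-1)\chi(-1)$; and $\pi_3,\pi_4$ to the two mixed choices $(D^*,\GL_2)$ and $(\GL_2,D^*)$ — consistent with the fact that $\GO_D(4)\cong[D^*\times\GL_2(k)]/\Delta k^*$ — giving the two cases where the $\epsilon$'s are negatives of each other. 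The dichotomy $\pi_1/\pi_2$ on $\GSp_4$ versus $\pi_3/\pi_4$ on $\GSp_D(4)$ matches the fact that $\GSO(V^s)$ and $\GSO(V^a)$ both lift to the split $\GSp_4$, while $\GO_D(4)$ lifts to the inner form $\GSp_D(4)$, which is the correct local root-number constraint from the Gross--Prasad philosophy.

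The remaining ingredients are: (i) knowing that the theta lifts $\pi_i = \Theta(\tau)$ (or $\Theta(\tau')$) are nonzero and irreducible in the ranges considered — here I would cite the relevant nonvanishing results for $(\GSO_4,\GSp_4)$ theta lifts, noting the caveat already flagged in the statement that if $\tau_1=\tau_2$ then $\pi_3=\pi_4=0$, consistent with the theorem reading vacuously in that case since the mixed sign condition cannot be realized; and (ii) matching the $\GSp_4$-parameter of $\pi_i$ with $\tau_1\oplus\tau_2$ (as a $\GSp_4(\C)$-valued parameter, using Gan--Takeda), so that $\epsilon(\tau\otimes\Ind_K^k(\chi^{-1})) = \epsilon(\tau_1\otimes\Ind_K^k(\chi^{-1}))\cdot\epsilon(\tau_2\otimes\Ind_K^k(\chi^{-1}))$, reconciling this theorem with Conjecture~\ref{conj1}. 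I expect the main obstacle to be step (i) combined with the precise bookkeeping of the Weil-representation twist in the seesaw: one must verify that the Fourier--Jacobi coefficient against $\psi$ on the $\GSp$ side produces exactly the character $\chi$ (and not some twist of it by a quadratic or similitude character) on the $\GSO$ side, and that the central-character normalizations $\chi^2|_{k^*} = \omega_{\tau_1}\omega_{\tau_2}$ are compatible; getting the normalizing factors $\omega_{K/k}(-1)\chi(-1)$ to come out correctly is exactly where the Rallis-type computation of the theta kernel restricted to $N(k)$ has to be done carefully, and is the technical heart of the argument.
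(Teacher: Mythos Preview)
Your approach is essentially the paper's: compute the twisted Jacquet functor of the Weil representation (the paper does this directly in Section~7 rather than via a seesaw, culminating in Theorem~\ref{thm4} and Corollaries~\ref{corollary5}--\ref{corollary6}), reducing the Bessel model of $\Theta(\tau_1\boxtimes\tau_2)$ to whether $\chi$ appears in each $\tau_i|_{K^*}$, and then apply Saito--Tunnell exactly as you describe. The ``Weil-representation twist'' you worry about is in fact absent --- the key geometric input (Section~\ref{sect:applications}) is the explicit identification of $\G[\SO(K)\times\SO(K)]$ inside $\GSO(D)=(D^*\times D^*)/\Delta k^*$ via the isomorphism $(x,y)\mapsto(xy,x\bar y)$ from $(K^*\times K^*)/\Delta k^*$, which makes the reduction clean and the sign $\omega_{K/k}(-1)\chi(-1)$ fall out directly from Saito--Tunnell without further bookkeeping.
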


Here is the corresponding  global theorem.

\begin{thm}\label{thm1.5}

Let $D$ be a quaternion  algebra over a number field $F$, with the
adele ring ${\Bbb A}_F$. Let $\Pi_1$ and $\Pi_2$ be two automorphic
representations of $D^*({\Bbb A}_F)$ with the same central
characters, so that $\Pi_1 \boxtimes \Pi_2$ can be considered to be an
automorphic representation on the corresponding orthogonal group
$\GSO_4({\Bbb A}_F)$ defined by the reduced norm on $D$. Let $\Pi$
be the theta lift to $\GSp_4({\Bbb A}_F)$ of $\Pi_1 \boxtimes\Pi_2$ on
 $\GSO_4({\Bbb A}_F)$. Let $E$ be a separable quadratic algebra over $F$,
and $\chi$ a Gr\"ossencharacter on ${\Bbb A}_E^*$ whose restriction 
to ${\Bbb A}_F^*$ is the central character of $\Pi$. Let $\psi: 
N({\Bbb A}_F)/N(F) \rightarrow {\Bbb C}^*$ be a character which is normalized
by ${\Bbb A}_E^*$, and hence $(\chi,\psi)$ gives rise to a character
 on $R({\Bbb A}_F)= {\Bbb A}_E^* N({\Bbb A_F})$ which we will
abuse notation to denote simply by $\chi$ as $\psi$ is considered fixed. 
Then the period integral on $\Pi$ taking $f \in \Pi$ to
$$\int_{R(F){\Bbb A}_F^*\backslash R({\Bbb A}_F)} f(g) \chi^{-1}(g) dg ,$$
is not identically zero if and only if the period integrals,
$$\int_{E^*{\Bbb A}_F^*\backslash {\Bbb A}_E^*} f_1(g) \chi^{-1}(g) dg ,$$
and
$$\int_{E^*{\Bbb A}_F^*\backslash {\Bbb A}_E^*} f_2(g) \chi^{-1}(g) dg $$
on $\Pi_1$ and $\Pi_2$ respectively are not identically zero; in particular,
by Waldspurger, if the period integral on
${R(F){\Bbb A}_F^*\backslash R({\Bbb A}_F)}$ of functions in $\Pi$ is not
identically zero,
then
$$ L(\frac{1}{2}, BC_E(\Pi_1) \otimes \chi^{-1}) \not = 0,$$
and
$$ L(\frac{1}{2}, BC_E(\Pi_2) \otimes \chi^{-1}) \not = 0,$$
where $BC_E$ denotes the base change to $\GL_2(E)$ of an automorphic
form on $\GL_2(F)$.

Further, for automorphic representations $\Pi_1$ and $\Pi_2$ on
$\GL_2({\Bbb A}_F)$, if $ L(\frac{1}{2}, BC_E(\Pi_1) \otimes \chi^{-1})
\not = 0,$ and $ L(\frac{1}{2}, BC_E(\Pi_2) \otimes \chi^{-1}) \not
= 0,$  Waldspurger's theorem gives quaternion algebras $D_1$ and
$D_2$ over $F$, and an automorphic representation of $(D_1^* \times
D_2^*)/\Delta{\Bbb G}_m$ such that the corresponding period
integrals on $E^*{\Bbb A}_F^*\backslash {\Bbb A}_E^*$ are nonzero.
Given $D_1$ and $D_2$, quaternion algebras over the number field
$F$, let $D_1 \otimes D_2 \cong M_2(D)$. Taking the tensor product
of canonical involutions on $D_1$ and $D_2$, we get an involution on
$M_2(D)$, and hence there is a hermitian form on a 2 dimensional
vector space over $D$ such that the corresponding $\GSO_D(4)= [D_1^*
\times D_2^*] / \Delta {\Bbb G}_m$. Define $\GSp_4(D)$ using this
$D$, and construct a representation of $\GSp_4(D_{\Bbb A})$  via
theta lifting. Then for this automorphic representation,
  say $\tilde{\Pi}$  on
$\GSp_D(4)$, the corresponding period integral of functions $f$ in
$\tilde{\Pi}$,
$$\int_{R(F){\Bbb A}_F^*\backslash R({\Bbb A}_F)} f(g) \chi^{-1}(g) dg ,$$
is not identically zero (in particular $\tilde{\Pi}$ is not zero).

\end{thm}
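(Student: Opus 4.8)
The plan is to reduce both halves of the statement, through a single unfolding computation, to Waldspurger's theorem on toric periods for $\GL_2$ and its inner forms: the crux is an identity expressing the Bessel period of the theta lift $\Pi=\Theta(\Pi_1\boxtimes\Pi_2)$ as a product of the two toric periods of $\Pi_1$ and $\Pi_2$ over $E^*\A_F^*\backslash\A_E^*$. Realize $\Pi$ by the similitude theta correspondence for the dual pair $(\GSp_4,\GSO(V))$, where $V$ is the relevant four--dimensional quadratic space (over $F$, or over $D$ in the quaternionic case) with $\GSO(V)\cong(D_1^*\times D_2^*)/\Delta\mathbb{G}_m$, so that a typical $f\in\Pi$ is $f(g)=\int_{[\GSO(V)]}\theta_\varphi(g,h)F(h)\,dh$ with $F=F_1\otimes F_2\in\Pi_1\boxtimes\Pi_2$ and $\varphi=\otimes_v\varphi_v$ a Schwartz function with matching similitude character. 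Substitute this into $\mathcal B_\chi(f)=\int_{R(F)\A_F^*\backslash R(\A_F)}f(g)\chi^{-1}(g)\,dg$ and interchange the order of integration (legitimate when $\Pi_1,\Pi_2$ are cuspidal, with the usual regularization otherwise). Integrating the theta kernel first over $N(F)\backslash N(\A_F)$ against $\psi$ restricts it, in the Schr\"odinger model, to the $\GSO(V)$--closed subvariety $\Omega_\psi\subset V^2$ of pairs $(v_1,v_2)$ whose Gram matrix is the one prescribed by $\psi$; since $\Omega_\psi$ is a homogeneous space under $\SO(V)$ with point stabiliser the norm--one torus of $E$, unfolding the $[\GSO(V)]$--integral along $\SO(V)(F)$ collapses it to an integral over $\A_E^*\backslash\SO(V)(\A_F)$. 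Carrying out the remaining integration over the torus $\A_E^*\subset\GSp_4$ against $\chi^{-1}$ and using $\GSO(V)\cong(D_1^*\times D_2^*)/\Delta\mathbb{G}_m$ to spread the torus $\A_E^*$ across the two factors yields an identity
\[
\mathcal B_\chi(\theta_\varphi(F_1\otimes F_2))=c\sum_i\mathcal P_{E,\chi}(F_1^{(i)})\,\mathcal P_{E,\chi}(F_2^{(i)}),\qquad \mathcal P_{E,\chi}(\phi)=\int_{E^*\A_F^*\backslash\A_E^*}\phi(t)\chi^{-1}(t)\,dt,
\]
a finite sum coming from a decomposition of $\varphi$ (and of $\Omega_\psi(F)$ into $\SO(V)(F)$--orbits), with $c\neq 0$; the hypothesis $\chi|_{\A_F^*}=\omega_{\Pi_j}$ is precisely what makes $\mathcal P_{E,\chi}$ well defined on $\Pi_j$.

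From this identity, if $\mathcal B_\chi$ is not identically zero on $\Pi$ then $\mathcal P_{E,\chi}$ is not identically zero on $\Pi_1$ nor on $\Pi_2$, which is one direction of the first equivalence. For the converse one chooses the local Schwartz data $\varphi_v$ so that the local analogue of the identity is nonzero at every place; this uses the non--vanishing of the local theta lifts, the local multiplicity--one statement (Theorem~\ref{thm1.1}), and the uniqueness of the local toric functionals, so that no cancellation occurs in the finite sum over $i$. Given the equivalence, Waldspurger's theorem applied separately to $\Pi_1$ and $\Pi_2$ then yields $L(\frac12,BC_E(\Pi_j)\otimes\chi^{-1})\neq0$ for $j=1,2$ (here one uses that this $L$--value is the Rankin--Selberg $L$--function whose non--vanishing governs the toric period, by the standard factorization through the dihedral automorphic form attached to $\chi$).

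For the converse part, start with $\Pi_1,\Pi_2$ on $\GL_2(\A_F)$ satisfying $L(\frac12,BC_E(\Pi_j)\otimes\chi^{-1})\neq0$ for $j=1,2$. Non--vanishing of the completed $L$--value forces the product over all places of the local root numbers attached to $(\Pi_j,\chi)$ to equal $+1$, so Waldspurger's theorem produces, for each $j$, a quaternion algebra $D_j/F$ with $E\hookrightarrow D_j$ and a Jacquet--Langlands transfer $\Pi_j^{D_j}$ on $D_j^*(\A_F)$ on which $\mathcal P_{E,\chi}$ is not identically zero. Writing $D_1\otimes_F D_2\cong M_2(D)$ and taking the tensor product of the canonical involutions gives a (skew--)Hermitian form on $D^2$ whose connected similitude group is $\GSO_D(4)\cong(D_1^*\times D_2^*)/\Delta\mathbb{G}_m$; then $\Pi_1^{D_1}\boxtimes\Pi_2^{D_2}$ is automorphic on $\GSO_D(4)(\A_F)$, and we set $\tilde\Pi=\Theta(\Pi_1^{D_1}\boxtimes\Pi_2^{D_2})$, its theta lift to the rank--one form $\GSp_D(4)(\A_F)$ built from this $D$. (When $D_1\cong D_2$ one has $M_2(D)=M_4(F)$, $D=M_2(F)$, and $\GSp_D(4)=\GSp_4$, so the first part is the special case of this construction.) The unfolding computation above applies verbatim, since it used only the abstract structure $\GSO(V)\cong(D_1^*\times D_2^*)/\Delta\mathbb{G}_m$ and the integrations along $N$ and $\A_E^*$, which do not see the local forms of the groups; it gives $\mathcal B_\chi(\tilde\Pi)$ as a nonzero multiple of $\sum_i\mathcal P_{E,\chi}(F_1^{(i)})\mathcal P_{E,\chi}(F_2^{(i)})$ with $F_j\in\Pi_j^{D_j}$, and choosing the local data as before makes this nonzero; hence the Bessel period on $\tilde\Pi$ is not identically zero, and in particular $\tilde\Pi\neq0$.

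The main obstacle is the unfolding identity together with the ``if'' directions. One must carry out the $N$-- and $\A_E^*$--integrations of the theta kernel carefully --- the Schr\"odinger--model computation, the homogeneous--space structure of $\Omega_\psi$ and the matching of the torus in $\GSp_4$ with its stabiliser in $\SO(V)$, and the bookkeeping of the $\Delta\mathbb{G}_m$--quotient and the central characters --- control convergence when $\Theta(\Pi_1\boxtimes\Pi_2)$ is not cuspidal, and, most delicately, supply the local inputs (non--vanishing of local theta lifts, local multiplicity one, uniqueness of local toric functionals) needed to pass from ``$\mathcal P_{E,\chi}$ nonzero on each $\Pi_j$'' to ``$\mathcal B_\chi$ nonzero'' for a suitable global $\varphi$; the $L$--value hypothesis enters the converse precisely to guarantee the global root--number consistency that lets $D_1,D_2$, and hence $D$, exist.
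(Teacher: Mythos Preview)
Your proposal is correct and follows essentially the same route as the paper: unfold the Bessel period of the theta lift in the Schr\"odinger model, use that the embeddings $W_1\hookrightarrow V$ with the prescribed Gram matrix form a single $\SO(V)$-orbit with stabiliser $\SO(W_1^\perp)$, and then identify the resulting period over $\G[\SO(W_1)\times\SO(W_1^\perp)]\subset\GSO(V)$ with a product of the two toric periods of $\Pi_1,\Pi_2$ over $E^*\A_F^*\backslash\A_E^*$ via the isomorphism $(E^*\times E^*)/\Delta F^*\cong\G[\SO(E)\times\SO(E)]$; Waldspurger then gives the $L$-value statements, and the quaternionic case goes through verbatim.

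One presentational point: the paper writes the unfolded identity as
\[
B_{\chi,\mu}(\theta^0(f;\varphi))=\int_{\SO(W_1^\perp)(\A)\backslash\SO(V)(\A)}\Lambda_\mu(f,\chi)(h)\,L(h)\varphi(\mu)\,dh,
\]
with the period $\Lambda_\mu(f,\chi)$ cleanly separated from the Schwartz data. This makes the ``if'' direction immediate by a density argument (choose $\varphi$ so the integral does not vanish once $\Lambda_\mu\not\equiv0$), without the local multiplicity-one and local-functional ingredients you invoke; those are not needed here, and your finite-sum expression $c\sum_i\mathcal P_{E,\chi}(F_1^{(i)})\mathcal P_{E,\chi}(F_2^{(i)})$ is a slightly imprecise rendering of this integral.
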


\begin{rem} There is a considerable amount of {\it Geometric Algebra}
especially using {\it exceptional isomorphisms} of low rank groups
in this paper (such as $\SO(5)$ being isomorphic to $\PGSp(4)$ or
$\SO(6)$ being related to $\SL(4)$, the structure of their inner
forms which usually has different constructions for the two groups
involved, and the relation of their subgroups under this
isomorphism). This fits rather nicely  to yield exactly what is
needed for the similitude groups being considered (such as
$\GSO(2)$, which is much preferred over $\SO(2)$).
\end{rem}

\begin{rem} It may be noted that  besides its
intrinsic interest, as Bessel models are usually nonzero for some
choice of $\chi : K^* \rightarrow {\Bbb C}^*$, they can be used in
developing the theory of $L$-functions for $\GSp_4(k)$ as in the
early work of Novodvorsky and Piatetski-Shapiro, extending
considerably the scope of the theory of $L$-functions based on
genericity hypothesis. \end{rem}

We end this introduction by briefly recalling  how the proof of the
main theorems are obtained. The proof of the p-adic theorem is achieved
in two parts. First we  deal  with all subquotients of principal
series by standard Mackey theory (restriction of an induced
representation to a subgroup) which calculates the Bessel models of
representations involved, and then a separate calculation on root
numbers combined with the theorem of Saito and Tunnell about
characters of $GL_2(k)$ yields conjecture \ref{conj1} in these
cases. Some extra complication arises when the representation is not
fully induced, but this is a small number of cases, in fact two
kinds of representations that really matter; one of these can be
handled by theta methods, the other remains as a problem as pointed
out in the body in the paper. One is then left with supercuspidals
which in odd residue characteristic are obtained by theta lifting
from $\GO(4)$. These are dealt with by the methods related to the Weil
representation. We will also prove a theorem for the archimedean
case dealing with discrete series Vogan packets. The sufficiency of
the conditions of the archimedean theorem follows from the work of
Takloo-Bighash for rank two case. In this case one uses the results
of Kolk and Varadarajan to establish the necessity of the
conditions. In the rank one case, the results follow directly 
from Wallach's
results. We will also include a theorem on the existence of Bessel
functionals in the global situation, and will explain how one can
draw local consequences from the global theorem.

\section{Bessel models for principal series representations}

The aim of this section is to calculate the twisted Jacquet functor
$\pi_\psi$ for a principal series representation $\pi$  of $\GSp_4(k)$ 
with respect to
a non-degenerate character $\psi: N \rightarrow {\Bbb C}^*$ given by
a symmetric matrix $A \in M_2(k)$ as $\psi(X)= \psi_0(tr(AX))$ for
$X \in N = {\rm Sym}_2(k)$.

We note that $\pi_\psi$ is a module for the subgroup $$M_\psi \cong
\{g \in \GL_2(k)| gA^tg= \lambda(g) A\}$$ which is the normaliser in
$\GL_2(k)$ of a certain torus $K_\psi^*$.

At certain points in the paper, we will find it more convenient to work 
with co-ordinates, for which we fix some notation here.

Let $W$ be a four dimensional symplectic vector space over a field $k$ 
with a fixed basis $\{e_1,e_2,e_3,e_4\}$, and a symplectic form
$<,>$ on $W$ such that $<e_1,e_3>=-<e_3,e_1> = 1$, 
$<e_2,e_4>=-<e_4,e_2> =1$, and all other products among these basis
vectors to be zero.

\subsection{Siegel Parabolic}

We begin with the case when $\pi$ is induced from the Siegel
parabolic $P$ from an irreducible representation $\rho$ of the
Levi subgroup $M$ of $P=MN$.

As usual, let $P$ be the Siegel parabolic stabilizing the isotropic
subspace $W= \{e_1,e_2\}$ with $M$ the stabiliser of the isotropic
subspaces $W=\{e_1,e_2\}$ and $W'=\{e_3,e_4\}$. The calculation of
the twisted Jacquet functor will depend on the understanding of the
double coset $P\backslash G/P$ with $G= \GSp_4(k)$, which is the
same as $G\backslash [G/P \times G/P]$, or the orbit of $\GSp_4(k)$
on pairs of maximal isotropic subspaces. It is easy to see that
there are three orbits of pairs of isotropic subspaces $(W_1,W_2)$:

\begin{enumerate}

\item $W_1 = W_2$; in this case we take $W_1=W_2= W$.

\item $W_1\cap W_2 = \{0\}$; in this case we take $W_1=W$, and
$W_2 = W'$.

\item $W_1 \cap W_2$ is 1-dimensional; in this case we take $W_1=W$,
and $W_2 = \{e_1,e_4\}$.

\end{enumerate}

As $W_1$ is chosen to be $W$ in all the three cases, the stabiliser
in $\GSp_4(k)$ of the pair of isotropic subspaces $(W_1,W_2)$ is a
subgroup of $P$ which is the following subgroup $H_i$ of $P$ in the
three cases:

\begin{enumerate}
\item $H_1=P$.

\item $H_2=M$.

\item $H_3$ containing the unipotent group
$ \left ( \begin{array}{cc} x & y
\\ y & 0 \end{array} \right ) \in {\rm Sym}^2(k)  \subset N$.

\end{enumerate}

From the Mackey theory, it follows that the representation $\pi =
{\rm ind}_P^G\rho$
restricted to $P$ is obtained by gluing the following three
representations:

\begin{enumerate}

\item $\rho$.

\item ${\rm ind}_M^P\rho$.

\item ${\rm ind}_{H_3}^P\rho|_{H_3}$.

\end{enumerate}

(The discriminant function $\delta_P$ used for normalized induction
can be seen to be trivial on $M_\psi$, hence in light of the eventual
answer, one can ignore $\delta_P$ in what follows.)

We observe that since the representation $\rho$ of $M$ is extended
to $P$ trivially across $N$, for a nondegenerate character $\psi$
of $N$, $\rho_\psi=0$ in case $(i)$.

For case $(iii)$, as
  $$ \left ( \begin{array}{cc} a & b
\\ b & c \end{array} \right )
\left ( \begin{array}{cc} x & y
\\ y & 0 \end{array} \right )
=\left ( \begin{array}{cc} ax+by & ay
\\bx+cy & by \end{array} \right ),$$
  $$\psi\left[ \left ( \begin{array}{cc} a & b
\\ b & c \end{array} \right )
\left ( \begin{array}{cc} x & y
\\ y & 0 \end{array} \right ) \right] = \psi_0(ax+2by),$$
it follows that if the character $\psi$ of $N$ were to be trivial
on the subgroup $N \cap H_3$, $a=b=0$, and hence $\psi$
will not be a nondegenerate character. Noting that $N$ is a
normal subgroup of $P$, it follows that any character of $N$
appearing in case $(iii)$ is degenerate, and hence case $(iii)$
does not contribute to the twisted Jacquet functor.

In case $(ii)$, since $P \cong MN$ with $N$ normal, we get $
{\rm ind}_M^P\rho \cong \rho \otimes {\rm ind}_M^P {\bf 1}$ 
as $P$-modules. Hence,
  $$\left[ {\rm ind}_M^P\rho \right]_\psi  \cong \rho|_{M_\psi}$$
for any character $\psi$ of $N$. Thus we find that the twisted Jacquet
functor in the three cases is as follows:

\begin{enumerate}
\item 0.

\item $\rho|_{M_\psi}$.

\item 0.

\end{enumerate}

Therefore we have the following proposition.

\begin{prop}\label{prop2.1}
For a principal series representation $\pi$ of $G=\GSp_4(k)$ induced
from a representation $\rho$ of $P=MN$ of a Siegel parabolic,
$\pi_\psi \cong \rho$ restricted to $M_{\psi}$.
\end{prop}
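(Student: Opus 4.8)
The plan is to carry out exactly the Mackey-theoretic double-coset analysis that the excerpt has already set up, and observe that it forces the answer. First I would fix the Siegel parabolic $P = MN$ stabilizing $W = \{e_1, e_2\}$, and note that computing $\pi_\psi$ for $\pi = \mathrm{ind}_P^G \rho$ amounts to restricting $\pi$ to $P$ (since $N \subset P$ and $\pi_\psi$ only sees the $N$-action, which factors through $P$) and then applying the twisted Jacquet functor. By Mackey's restriction-to-a-subgroup theorem, $\pi|_P$ is glued from the representations $\mathrm{ind}_{H_i}^P (\rho|_{H_i})$ as $i$ runs over representatives of $P \backslash G / P$, and the excerpt identifies these: $H_1 = P$, $H_2 = M$, and $H_3$ the third stabilizer. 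Since the twisted Jacquet functor $V \mapsto V_\psi$ is exact, I can compute it on each of the three glued pieces separately and then reassemble.

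Next I would dispose of cases (i) and (iii). For case (i), the piece is $\rho$ itself, inflated from $M$ across $N$, so $N$ acts trivially; since $\psi$ is nondegenerate (hence nontrivial) on $N$, the $\psi$-coinvariants vanish. For case (iii), the point is that $N \cap H_3$ contains the matrices $\left(\begin{smallmatrix} x & y \\ y & 0 \end{smallmatrix}\right)$, and the computation $\psi\!\left(\left(\begin{smallmatrix} a & b \\ b & c\end{smallmatrix}\right)\left(\begin{smallmatrix} x & y \\ y & 0\end{smallmatrix}\right)\right) = \psi_0(ax + 2by)$ displayed in the excerpt shows that any character of $N$ trivial on $N \cap H_3$ must have $a = b = 0$, hence be degenerate; since $N \trianglelefteq P$, every character of $N$ occurring in $\mathrm{ind}_{H_3}^P(\rho|_{H_3})$ is a $P$-conjugate of one trivial on $N \cap H_3$ and hence still degenerate, so this piece also contributes $0$ to $\pi_\psi$.

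Then I would handle the surviving case (ii). Here the piece is $\mathrm{ind}_M^P \rho$; using $P = MN$ with $N$ normal, I would identify this $P$-module with $\rho \otimes \mathrm{ind}_M^P \mathbf{1}$, where $\mathrm{ind}_M^P \mathbf{1}$ is functions on $N$ with $N$ acting by translation. Taking $\psi$-coinvariants of the regular-representation-type module $\mathrm{ind}_M^P \mathbf{1}$ yields a one-dimensional space, and tensoring back the net effect on the whole piece is $\rho|_{M_\psi}$ — more precisely, $\left[\mathrm{ind}_M^P \rho\right]_\psi \cong \rho|_{M_\psi}$ as $M_\psi$-modules, for any character $\psi$ of $N$. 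Combining the three contributions $0$, $\rho|_{M_\psi}$, $0$ gives $\pi_\psi \cong \rho|_{M_\psi}$, which is the assertion of Proposition~\ref{prop2.1}.

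The only real subtlety — the step I would treat most carefully — is the normalization issue flagged parenthetically in the excerpt: normalized induction carries the modulus factor $\delta_P^{1/2}$, and one must check that $\delta_P$ restricts trivially to $M_\psi$ so that it can be absorbed (or ignored in light of the final clean answer). Since $M_\psi$ is the normalizer in $\GL_2(k)$ of an anisotropic-up-to-center torus $K_\psi^*$ and such a torus, together with the Weyl element normalizing it, acts with determinant of absolute value matching the similitude factor, one checks $|\delta_P|$ is trivial there; this is the one place where a short explicit computation is genuinely needed rather than formal Mackey theory. Everything else is bookkeeping with the exactness of the Jacquet functor and the three-orbit decomposition already established.
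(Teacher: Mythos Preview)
Your proposal is correct and follows essentially the same argument as the paper: the three-orbit Mackey decomposition of $\pi|_P$, the vanishing of pieces (i) and (iii) for nondegenerate $\psi$ via the explicit trace computation and normality of $N$, and the identification $[\mathrm{ind}_M^P\rho]_\psi \cong \rho|_{M_\psi}$ in case (ii), together with the side remark that $\delta_P$ is trivial on $M_\psi$. The only minor imprecision is your description of $M_\psi$ as normalizing an ``anisotropic-up-to-center'' torus, which presumes $K$ is a field; the argument and the $\delta_P$ check go through uniformly since $(\lambda\det g)^2=1$ on $M_\psi$ regardless.
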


Analogously, for $\GSp_D(4)$, we have the following.

\begin{prop}\label{prop2.2}
For a principal series representation $\pi$ of $G=\GSp_D(4)$ induced
from a representation $\rho$ of $P=MN$ of the unique parabolic 
of $\GSp_D(4)$(up
to conjugacy) with $M \cong D^* \times k^*$, $\pi_\psi \cong \rho$
restricted to $M_{\psi}$.
\end{prop}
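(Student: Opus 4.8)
The plan is to run exactly the same Mackey-theory argument as for $\GSp_4(k)$, keeping in mind that for $\GSp_D(4)$ the Siegel parabolic is (up to conjugacy) the unique proper parabolic, with Levi $M\cong D^*\times k^*$ and unipotent radical $N=\{n\in D\mid n+\bar n=0\}$. First I would identify the relevant double cosets $P\backslash G/P$, equivalently the $\GSp_D(4)$-orbits on pairs of maximal isotropic subspaces over $D$. Since $\GSp_D(4)$ has $k$-rank $1$, there are exactly two orbits of maximal isotropic ($D$-)lines $(L_1,L_2)$: either $L_1=L_2$, or $L_1\cap L_2=0$; the ``$1$-dimensional intersection'' case that occurred for $\GSp_4(k)$ simply does not arise here because the Witt index is $1$. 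So, fixing $L_1=L$ to be the standard isotropic line in all cases, the stabilizers are $H_1=P$ and $H_2=M$.

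Next I would apply Mackey's theorem to $\pi={\rm ind}_P^G\rho$ restricted to $P$: it is glued from $\rho$ (the closed-orbit piece, case $L_1=L_2$) and ${\rm ind}_M^P\rho$ (the open-orbit piece, case $L_1\cap L_2=0$). For the first piece, $\rho$ is inflated from $M=P/N$, so $N$ acts trivially and $\rho_\psi=0$ for any nondegenerate character $\psi$ of $N$. For the second piece, since $P=MN$ with $N$ normal, one has ${\rm ind}_M^P\rho\cong \rho\otimes{\rm ind}_M^P\mathbf{1}$ as $P$-modules, and ${\rm ind}_M^P\mathbf{1}$ is functions on $N$, whose twisted Jacquet functor $({\rm ind}_M^P\mathbf{1})_\psi$ is one-dimensional for the fixed $\psi$; hence $\bigl[{\rm ind}_M^P\rho\bigr]_\psi\cong\rho|_{M_\psi}$. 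The normalizing modulus $\delta_P$ restricts trivially to $M_\psi$, exactly as noted before Proposition~\ref{prop2.1}, so it can be ignored. Combining, $\pi_\psi\cong\rho|_{M_\psi}$.

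The one point that needs a genuine (though short) verification is the orbit count: I would check directly that $\GSp_D(4)$ acts transitively on pairs of distinct (i.e.\ transverse) maximal isotropic $D$-lines, using the skew-Hermitian form $\left(\begin{smallmatrix}0&1\\1&0\end{smallmatrix}\right)$ — given any such pair one can apply an element of $\GSp_D(4)$ carrying it to $(L,L')$ with $L=\{e_1\}$, $L'=\{e_2\}$ in the obvious coordinates — and that there is no intermediate orbit. This is where the rank-$1$ hypothesis does all the work: the absence of a third orbit is precisely why case $(iii)$ of the $\GSp_4(k)$ argument has no analogue, which is the only structural difference between the two propositions. Once this is in hand, the rest is a verbatim repetition of the split case, so I would simply state that the argument of Proposition~\ref{prop2.1} applies mutatis mutandis. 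The main (modest) obstacle is thus purely bookkeeping: setting up the geometry of isotropic $D$-subspaces for the skew-Hermitian quaternionic form cleanly enough that the two-orbit claim is transparent.
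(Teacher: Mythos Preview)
Your proposal is correct and is exactly the argument the paper has in mind: the paper does not give a separate proof of Proposition~\ref{prop2.2} at all, merely stating it ``Analogously'' after Proposition~\ref{prop2.1}, so your Mackey-theory argument with the two-orbit count (the rank-$1$ case eliminating the third orbit) is precisely the intended justification. One terminological quibble: the form $\left(\begin{smallmatrix}0&1\\1&0\end{smallmatrix}\right)$ defining $\GSp_D(4)$ is Hermitian, not skew-Hermitian (the skew-Hermitian form $\left(\begin{smallmatrix}0&1\\-1&0\end{smallmatrix}\right)$ is what the paper uses for $\GO_D(4)$), but this does not affect your argument.
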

\subsection{Klingen Parabolic}
We next direct our attention to the calculation of the twisted
Jacquet functor for representations induced from Klingen parabolic
$Q=M'N'$ which we take to be the stabiliser of the isotropic line
$\{e_1\}$. Once again, the restriction to $P$ of a representation
$\pi$ of $\GSp_4(k)$ induced from a representation $\rho$ of $M'$
extended trivially across $N'$ is obtained by gluing certain
representations indexed by double cosets $P\backslash \GSp_4(k)/Q$
which is the same as the $\GSp_4(k)$-orbits of a pair consisting of
a pair $(L,W)$ of one dimensional subspace $L$ of $V$, and a
two dimensional isotropic subspace $W$. There are two orbits:

\begin{enumerate}

\item $L \subset W$ in which case we take $L=\{e_1\}$, and $W=\{e_1,e_2\}$.

\item $L \not \subset W $ in which case we take $L=\{e_3\}$,
$W= \{e_1,e_2\}$.

\end{enumerate}

In case $(i)$, the part of the unipotent radical $N$ of $P$ which is
contained in the unipotent radical $N'$ of $Q$ is the set of matrices,
$$ \left ( \begin{array}{cc} 0 & y
\\ y & z \end{array} \right ) \in {\rm Sym}^2(k)  \subset N,$$

A calculation as done in case $(iii)$ of the principal series induced
from Siegel parabolic, it is easy to see that there are no
nondegenerate characters of $N$ trivial on
$$ \left ( \begin{array}{cc} 0 & y
\\ y & z \end{array} \right )
\in {\rm Sym}^2(k)  \subset N,$$
and therefore once again as $N$ is normal in $P$, it follows that
this double coset contributes nothing to the twisted Jacquet functor.

In the case $(ii)$, the stabiliser of the pair $(L,W)$ with
$L=\{e_3\}$, and $W=\{e_1,e_2\}$ is the subgroup

$$H= \left( \begin{array}{cccc} x_{11} & 0 & 0 & 0 \\
                            x_{21} &       x_{22} & 0      & x_{24} \\
                            0 &        0     & x_{33} & x_{34} \\
                            0 &         0 &     0     & x_{44}
\end{array} \right). $$

There are embeddings of $H$ in $Q= k^* \times \GL_2(k) \times N'$
with image $k^* \times B_2 \times \langle u \rangle$ where $B_2$
is the group of upper triangular matrices in $\GL_2(k)$, and $<u>$ is a 1
parameter subgroup in $N'$. In the embedding of $H$ in $P= k^*
\times \GL_2(k) \times N $, the one parameter subgroup $\langle u
\rangle$ goes to the upper triangular unipotent subgroup in 
$\GL_2(k)$, the unipotent radical of $B_2$ goes inside $N$ to a
1-dimensional subgroup that we denote by $N_0$, and the diagonal
torus to the diagonal torus in $\GL_2(k)$.

By Mackey theory, it follows that the restriction of $\pi$ to $P$ contains
${\rm ind}_H^P \rho'$ where $H$ can be taken to be $k^* \times B_2
\times N_0$ and the representation $\rho'$ is the restriction of
$\rho$ to the diagonal torus in $\GL_2(k)$ extended trivially across
the unipotent subgroup of $B_2$ to all of $B_2$. We assume now that
the representation $\rho$ of $\GL_2(k)$ is infinite dimensional, so
that (by Kirillov theory) its restriction to $B_2$ contains the
representation of $B_2$ which is obtained by inducing from a
character of the subgroup, $ZU$ of $B_2$,  consisting of central and
unipotent elements of $B_2$.

As the action of $K^*$ on $\GL_2(k)/P_1$ where $P_1$ is the subgroup of
$B_2$ consisting of elements of the form
$$ \left ( \begin{array}{cc} 1 & *
\\ 0 & * \end{array} \right ), $$
contains an open dense orbit,
it follows that for $R=K^*\cdot N$, 
$R\backslash P/H$ contains an open dense double coset
which in the case of $K$ a field is the unique
double coset. Thus the representation ${\rm ind}_H^P \rho'$
restricted to $R$ contains
$${\rm ind}_{R \cap H}^R \tilde{\psi} = {\rm ind}_{k^*N_0}^{K^*N}\tilde{ \psi}, $$
where $\tilde{\psi}$ is the character of $k^* N_0$ which is equal to the 
central character of $\pi$ restricted to $k^*$, and is 
the restriction  of the character
$\psi$ of $N$ to $N_0$, which can be checked to be nontrivial on $N_0$.

Thus its maximal quotient on which $N$ operates by $\psi$ is
 ${\rm ind}_{k^*N}^{K^*N} \omega {\psi} $ where  $\omega$ is the
central character of the representation $\rho$. We thus obtain
the following conclusion.

\begin{prop}
For a principal series representation $\pi$ of $G=\GSp_4(k)$ induced
from an infinite dimensional irreducible
 representation $\rho$ of $Q=M'N'$ of a Klingen
parabolic, $\pi_\psi$ restricted to $K^*= M_{\psi}$ has each and
every character of $K^*$ with the same central character as that of
$\rho$ appearing with multiplicity one as a quotient.
\end{prop}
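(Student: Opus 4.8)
The plan is to read off $\pi_\psi|_{K^*}$ from the Mackey computation already carried out in this subsection and then finish with Frobenius reciprocity for an induction from a central subgroup. Recall the situation: restricting $\pi=\mathrm{ind}_Q^G\rho$ to $P$, case (i) (with $L\subset W$) contributes nothing to $\pi_\psi$ because every character of $N$ occurring there is degenerate, while case (ii) (with $L\not\subset W$) produces the $P$-module $\mathrm{ind}_H^P\rho'$ with $H\cong k^*\times B_2\times N_0$ and $\rho'$ the inflation to $B_2$ of $\rho$ restricted to the diagonal torus. Since $\rho$ is infinite-dimensional, its Kirillov model gives $\rho|_{B_2}\supseteq\mathrm{ind}_{ZU}^{B_2}$ of the character that is $\omega=\omega_\rho$ on $Z$ and nontrivial on $U$; restricting $\mathrm{ind}_H^P\rho'$ to $R=K^*N$ through the open dense $K^*$-orbit on $\GL_2(k)/P_1\cong k^2\setminus\{0\}$ identifies the maximal $\psi$-quotient with $\mathrm{ind}_{k^*N}^{K^*N}\omega\psi$. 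Passing to $\psi$-coinvariants and restricting to $K^*=M_\psi$ then yields
$$\pi_\psi\big|_{K^*}\;\cong\;\mathrm{ind}_{k^*}^{K^*}\omega,$$
the smooth compact induction of the character $\omega$ from the central torus $k^*=Z(\GL_2(k))\subset K^*$, whose restriction to $k^*$ is the central character of $\pi$.

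It then remains to show that $\mathrm{ind}_{k^*}^{K^*}\omega$ has as a quotient, with multiplicity one, every character of $K^*$ restricting to $\omega$ on $k^*$, and no others. This is a formal computation: for a character $\mu$ of $K^*$ one may dualize (as $\mu$ is admissible), the smooth contragredient of $\mathrm{ind}_{k^*}^{K^*}\omega$ being $\mathrm{Ind}_{k^*}^{K^*}\omega^{-1}$ because the modular characters are trivial ($K^*$ and $k^*$ are abelian), so by ordinary Frobenius reciprocity
$$\mathrm{Hom}_{K^*}\!\big(\mathrm{ind}_{k^*}^{K^*}\omega,\,\mu\big)\;\cong\;\mathrm{Hom}_{K^*}\!\big(\mu^{-1},\,\mathrm{Ind}_{k^*}^{K^*}\omega^{-1}\big)\;\cong\;\mathrm{Hom}_{k^*}\!\big(\mu^{-1},\,\omega^{-1}\big),$$
which is one-dimensional if $\mu|_{k^*}=\omega$ and zero otherwise. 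This gives the proposition, and incidentally re-proves the bound $\dim\mathrm{Hom}_R(\pi,\chi)\leq1$ of Theorem \ref{thm1.1} for these $\pi$.

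The one step that needs genuine care — and hence the main obstacle — is the identification of the \emph{maximal} $\psi$-quotient of $\mathrm{ind}_H^P\rho'$ with $\mathrm{ind}_{k^*N}^{K^*N}\omega\psi$ rather than with merely a submodule of it: one must check that nondegeneracy of $\psi$ forces $\psi|_{N_0}\neq1$ (so that $\tilde\psi$ is genuinely nontrivial on $N_0$), and that when $K$ is a split quadratic algebra the non-open $K^*$-orbits on $\GL_2(k)/P_1$ contribute only degenerate characters of $N$ and therefore vanish in the $\psi$-coinvariants. Once that is in place, everything reduces to Kirillov theory for $\GL_2(k)$ together with Frobenius reciprocity.
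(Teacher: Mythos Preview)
Your proposal is correct and follows essentially the same route as the paper: the Mackey decomposition over $P\backslash G/Q$, elimination of case~(i), the Kirillov model inside $\rho|_{B_2}$, and the open $K^*$-orbit on $\GL_2(k)/P_1$ leading to $\pi_\psi|_{K^*}\cong \mathrm{ind}_{k^*}^{K^*}\omega$. You then make explicit, via Frobenius reciprocity, the multiplicity-one step that the paper simply asserts as ``the following conclusion,'' and you correctly flag the two checks (nontriviality of $\psi|_{N_0}$ and the split-$K$ orbit analysis) that the paper only hints at.
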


\subsection{Degenerate principal series coming from Klingen parabolic}

In this section we modify the argument of the previous section to
calculate the $\psi$-Bessel model, for a non-degenerate character
$\psi$ of $N$, of a degenerate principal series representation of
$\GSp_4(k)$ induced from a one dimensional representation $\rho$ of
the Klingen parabolic $Q = M'N'$. The analysis of the previous
section gives the $\psi$-Bessel model of $\pi = {\rm
ind}_Q^{\GSp_4(k)} \rho$ as the $\psi$-Bessel model 
of ${\rm ind}_H^P \rho|_{H}$ 
where $H = B_2 \times k^* \times N_0$, 
with
$B_2$ the lower triangular subgroup of $\GL_2(k)$, and $N_0$ the one
parameter subgroup
 $ \left ( \begin{array}{cc} 0 & 0
\\ 0 & * \end{array} \right ). $ If follows that if $\pi$ has $\psi$-Bessel
model for $s =  \left ( \begin{array}{cc} a & b
\\ b & c \end{array} \right ) \in \GL_2(k), $ then
$$ {\rm tr}\left [ \left ( \begin{array}{cc} a & b
\\ b & c  \end{array} \right )  \left ( \begin{array}{cc} 0  & 0
\\ 0 & * \end{array} \right ) \right ] \equiv 0, {\rm~~ or~~} c =0.  $$
This means that if $\pi$ has $\psi$-Bessel model corresponding to the
symmetric matrix
$s =  \left ( \begin{array}{cc} a & b
\\ b & c \end{array} \right ), $ $c$ must be zero. For such symmetric
matrices, the corresponding quadratic form is split, and hence we
deduce that $\pi$ has $\psi$-Bessel model only for $K$ defined by a split
quadratic algebra, in which case $K \cong k \oplus k$.

Fixing now the character $\psi \left ( \begin{array}{cc} x_{11} & x_{12}
\\ x_{12} &  x_{22} \end{array} \right ) = \psi_{00}(x_{12}), $
of $N$ which is trivial on $N_0$, and which is stabilized by the
diagonal split torus $T$ in $\GL_2(k)$, it is easy to calculate
the twisted Jacquet module for the character $\psi$ of $N$ of the
induced representation  ${\rm ind}_H^P \rho|_{H}$ 
with $H = B_2 \times k^* \times N_0$, to conclude the following
proposition.

\begin{prop}\label{prop2.4}
Let $\rho$ be a one dimensional representation of the Klingen
parabolic $Q$ of $\GSp_4(k)$, and $\pi = {\rm ind}_Q^{\GSp_4(k)}
\rho,$ the corresponding principal series representation. Then $\pi$
has Bessel models for a quadratic algebra $K$ if and only if $K = k
\oplus k$, and in which case it has exactly one dimensional space of
Bessel models for the character of $K^*$ obtained by restriction
of $\rho$ to $K^*$ which sits inside the Levi subgroup of $Q$.

\end{prop}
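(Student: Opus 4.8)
The plan is to follow the same three-step reduction used in the preceding subsection (the principal series from the Klingen parabolic) and then carry out the one remaining twisted Jacquet computation explicitly, now keeping track of the fact that $\rho$ is one-dimensional. First I would invoke the double coset analysis $P\backslash\GSp_4(k)/Q$ already established: there are exactly two orbits of pairs $(L,W)$, and the orbit with $L\subset W$ contributes nothing to $\pi_\psi$ for any nondegenerate $\psi$, since the portion of $N$ lying inside $N'$ forces any $\psi$ trivial there to be degenerate (verbatim the argument of case $(i)$ of the Klingen section). Hence $\pi_\psi$ is computed from the single orbit with $L\not\subset W$, i.e.\ from ${\rm ind}_H^P\rho|_H$ where $H = B_2\times k^*\times N_0$ with $B_2$ the lower triangular subgroup of $\GL_2(k)$ and $N_0$ the one-parameter subgroup $\left(\begin{smallmatrix}0&0\\0&*\end{smallmatrix}\right)\subset N$; this is exactly the setup of the previous subsection, the only change being that $\rho$, being one-dimensional, restricts to $H$ as a character rather than containing a Kirillov-type induced piece.

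Next I would pin down for which symmetric matrices $s=\left(\begin{smallmatrix}a&b\\b&c\end{smallmatrix}\right)$ the character $\psi(X)=\psi_{00}({\rm tr}(sX))$ can survive. Since $N_0\subset N$ and $N$ is normal in $P$, any character of $N$ occurring as a subquotient of ${\rm ind}_H^P\rho|_H$ must be trivial on $N_0$ (because $\rho|_H$ is trivial on $N_0$, $N_0$ being in the unipotent radical direction and $\rho$ extended trivially across $N$). Triviality on $N_0$ says ${\rm tr}\!\left[\left(\begin{smallmatrix}a&b\\b&c\end{smallmatrix}\right)\left(\begin{smallmatrix}0&0\\0&*\end{smallmatrix}\right)\right]=c\cdot(*)\equiv 0$, forcing $c=0$. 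A symmetric matrix with $c=0$ has determinant $-b^2\le 0$ and the associated binary quadratic form $ax_1^2+2bx_1x_2$ is split, so the torus $K^*_\psi$ is split and $K\cong k\oplus k$. This already gives the "only if" direction of the proposition.

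For the "if" direction, fix the standard split choice $\psi\left(\begin{smallmatrix}x_{11}&x_{12}\\x_{12}&x_{22}\end{smallmatrix}\right)=\psi_{00}(x_{12})$, which is trivial on $N_0$ and is stabilized precisely by the diagonal split torus $T\subset\GL_2(k)$, so $M_\psi$ contains $T=K^*$. It then remains to compute $\left[{\rm ind}_H^P\rho|_H\right]_\psi$ as a $T$-module. Since $\rho$ is one-dimensional, $\rho|_H$ is a character $\chi_\rho$ of $H = B_2\times k^*\times N_0$; geometrically ${\rm ind}_H^P\rho|_H$ is (smooth) functions on $H\backslash P$, and the twisted Jacquet functor with respect to $\psi$ localizes to the unique $R$-orbit (with $R=K^*N$) that is open dense in $R\backslash P/H$ — this is exactly the orbit whose openness was verified in the previous subsection via the open dense orbit of $K^*$ on $\GL_2(k)/P_1$. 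On that open orbit the induced representation, after taking the $\psi$-coinvariants of $N$, collapses to a one-dimensional space on which $T=K^*$ acts by the restriction of $\rho$ (equivalently $\chi_\rho|_T$) to the diagonal torus sitting inside the Levi of $Q$; the other double cosets, if any, are not open and contribute nothing to the Jacquet module of the induced representation by the usual filtration argument. This yields the claimed one-dimensional space of Bessel models for the single character $\rho|_{K^*}$.

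The main obstacle I expect is the last bookkeeping step: verifying carefully that after restricting the degenerate (one-dimensional $\rho$) induced representation ${\rm ind}_H^P\rho|_H$ to $R=K^*N$ and passing to $N$-coinvariants against $\psi$, only the open dense double coset survives and it contributes exactly a one-dimensional space — i.e.\ that there is no extra contribution from the non-open cosets and no higher multiplicity. In the infinite-dimensional case of the previous subsection this was handled by Kirillov theory; here, because $\rho|_H$ is already a character, one has to argue directly that the non-open cosets give induced representations of $N$ that do not contain $\psi$ (again because $\psi$ restricted to the relevant conjugates of $N\cap H$, which includes $N_0$, must be trivial, contradicting nondegeneracy for those cosets), so they drop out of the twisted Jacquet module. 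Once that is in place, the multiplicity-one statement is also forced by Theorem \ref{thm1.1}.
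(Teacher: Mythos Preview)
Your overall strategy matches the paper's: reduce to the open double coset in $P\backslash \GSp_4(k)/Q$, identify $\pi_\psi$ with the $\psi$-Bessel model of ${\rm ind}_H^P\rho|_H$ for $H=B_2^-\times k^*\times N_0$, and then read off the constraint $c=0$ from the triviality of $\psi$ on $N_0$. Up through the ``only if'' direction (i.e.\ $K$ must be split) your argument is correct and is exactly what the paper does.

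The gap is in your final step. You assert that after restricting ${\rm ind}_H^P\rho|_H$ to $R=T\cdot N$ (with $T$ the diagonal torus), only the open $R$--$H$ double coset in $P$ survives the $\psi$-twisted Jacquet functor, while the closed ones die ``by nondegeneracy.'' This is backwards. With $\psi\!\left(\begin{smallmatrix}x_{11}&x_{12}\\x_{12}&x_{22}\end{smallmatrix}\right)=\psi_{00}(x_{12})$ fixed, there are \emph{three} cosets in $T\backslash \GL_2(k)/B_2^-$: the two $T$-fixed points on $\GL_2/B_2^-\cong\mathbb{P}^1$ (represented by $e$ and the Weyl element $w$) and the open orbit (represented by $u=\left(\begin{smallmatrix}1&1\\0&1\end{smallmatrix}\right)$). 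For the open coset one computes $uN_0u^{-1}=\left\{\left(\begin{smallmatrix}z&z\\z&z\end{smallmatrix}\right)\right\}$, on which $\psi$ is \emph{non}-trivial; since $\rho$ (being a character) is trivial there, the induced piece ${\rm ind}_{Z\cdot uN_0u^{-1}}^{TN}(\cdots)$ has \emph{zero} $\psi$-coinvariants. By contrast, for the closed cosets $g=e$ and $g=w$ one has $R\cap gHg^{-1}=T\cdot gN_0g^{-1}$ with $gN_0g^{-1}$ equal to $\left(\begin{smallmatrix}0&0\\0&*\end{smallmatrix}\right)$ or $\left(\begin{smallmatrix}*&0\\0&0\end{smallmatrix}\right)$, and $\psi$ \emph{is} trivial on both of these; each contributes a one-dimensional $\psi$-coinvariant, carrying the $T$-character $\rho|_T$ (resp.\ its Weyl conjugate $\rho^w|_T$).

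So your nondegeneracy argument cannot be recycled here: once you have fixed the split $\psi$ trivial on $N_0$, there is no contradiction with nondegeneracy on the closed cosets. The correct bookkeeping yields two one-dimensional pieces indexed by the two (Weyl-conjugate) restrictions of $\rho$ to the split torus, each with multiplicity one---which is the content of the proposition. Invoking Theorem~\ref{thm1.1} at the end does rescue the multiplicity statement, but not your identification of which orbit carries the Bessel functional.
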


\section{Conjecture \ref{conj1} for irreducible principal series}

Having done the calculation of Bessel models for principal series
representations in the last section, we are now in position to
verify Conjecture \ref{conj1} for all generic $L$-packets of
$\GSp_4(k)$ which arise as subquotients of a principal series
representation induced from either of the two maximal parabolics. A
principal series representation of $\GSp_4(k)$ which is induced from
the Borel subgroup will be considered to be induced from the Siegel
parabolic. We will assume in this section that multiplicity one
theorem, i.e., Theorem \ref{thm1.1}, holds.

Our calculations when the principal series representation is irreducible
will be relatively straightforward. When the principal series representation
has more than one irreducible factor, the methods of the previous
section will give information only about a sum of multiplicities
of Bessel models for the various representations involved. In these cases
we will have to combine this information (about sum of multiplicities)
with an information that we deduce from the method of theta correspondence
developed later in the paper which yields the multiplicity of Bessel
model for the smaller (of the two representations involved), and thus
we succeed in the calculation of the multiplicities of the Bessel
models of the two representations.

Let us begin by stating the Langlands parameters of principal series
representations, and then do the relevant local root number
calculations.

\subsection{Siegel parabolic}

Let $P=MN$ be a Siegel parabolic with $M \cong  \GL_2(k) \times {\Bbb
G}_m$. Let $\pi_1 \boxtimes \mu$ be an irreducible representation of $M$,
giving rise to the principal series representation $\pi$ of
$\GSp_4(k)$ by parabolic induction. Then the Langlands parameter of
the representation $\pi$ of $\GSp_4(k)$ is a representation
$$\sigma_{\pi}: W'_k\rightarrow \GSp_4({\Bbb C})$$ where $W'_k$
is the Weil-Deligne group of $k$ which we take to be $W'_k =W_k
\times \SL_2({\Bbb C})$. Assuming that the Langlands parameter of
the representation $\pi_1$ of $\GL_2(k)$ is $\sigma_1$, and that of
$\mu$ is $\mu$ itself, we have
$$\sigma_\pi = \mu \sigma_1 \oplus (\mu \oplus \mu\det \sigma_1).$$
We note that the Langlands parameter of an irreducible
 principal series representation
of $\GSp_4(k)$ arising from parabolic induction of a representation
of the Siegel parabolic takes values in (Levi subgroup of) the
Klingen parabolic of $\GSp_4({\Bbb C})$.   (As a check on the
Langlands parameter, which the authors did not get right the first
time, one notes that the twisting by a character
$\chi:k^*\rightarrow {\Bbb C}^*$, thought of as a character of
$\GSp_4(k)$ through the similitude factor, takes the principal
series corresponding to $(\pi_1,\mu) $ to $(\pi_1,\mu \chi)$, and
this on Langlands parameter is supposed to be twisting by $\chi$.)

The central character $\omega_\pi$ of $\pi$ is the same as the
similitude character of $\sigma_\pi$ which is $\mu^2\det \sigma_1$.
Therefore the characters $\chi$ of $K^*$ appearing in Conjecture
\ref{conj1} have the property that $\chi|_{k^*}= \mu^2 \det
\sigma_1$, and these are the only characters that we will consider
in what follows.

By the standard properties of the local root numbers, it follows that
for $\sigma_\pi$ as above,
\begin{eqnarray*}
& & \epsilon(\sigma_\pi \otimes {\rm Ind}_K^k(\chi^{-1})) \\
&=&
\epsilon(\mu \sigma_1 \otimes {\rm Ind}_K^k(\chi^{-1})) \cdot
\epsilon(\mu \otimes {\rm Ind}_K^k(\chi^{-1})) \cdot
  \epsilon(\mu\det \sigma_1 \otimes {\rm Ind}_K^k(\chi^{-1})).
\end{eqnarray*}
Since for any representation $V$ of $W'_k$,
$$\epsilon(V)\cdot \epsilon(V^*)= \det V(-1),$$
and as for $V=  \mu \otimes {\rm Ind}_K^k(\chi^{-1})$,
$V^* \cong \mu\det \sigma_1  \otimes {\rm Ind}_K^k(\chi^{-1})$,
it follows that,

\begin{eqnarray*}
\epsilon(\mu \otimes {\rm Ind}_K^k(\chi^{-1})) \cdot
  \epsilon(\mu \det \sigma_1  \otimes {\rm Ind}_K^k(\chi^{-1}))
& = & \det(\mu \otimes {\rm Ind}_K^k(\chi^{-1}))(-1) \\
& = & \omega_{K/k}(-1)\chi(-1).
\end{eqnarray*}
Therefore,
$$\epsilon(\sigma_\pi \otimes {\rm Ind}_K^k(\chi^{-1})) =
\epsilon(\sigma_1 \otimes {\rm Ind}_K^k(\chi^{-1})) \cdot
\omega_{K/k}(-1)\chi(-1).$$

Therefore it follows from the theorem of Saito and Tunnell that
$$\epsilon(\sigma_\pi \otimes {\rm Ind}_K^k(\chi^{-1})) =  1$$
if and only if the character $\chi$ of $K^*$ appears in the
representation $\pi_1$ of $\GL_2(k)$, which by proposition of the
last section are exactly the characters of $K^*$ for which $\pi$ has
Bessel models.

Furthermore, if $\epsilon(\sigma_\pi \otimes {\rm
Ind}_K^k(\chi^{-1})) = - 1$, the representation $\pi_1$ of
$\GL_2(k)$ is a discrete series representation, and if $\pi_1'$ is
the corresponding representation of $D^*$, then $\chi$ appears in
the representation $\pi_1'$ restricted to $K^*$. By proposition
\ref{prop2.2}, the corresponding principal series representation of
$\GSp_D(4)$ has Bessel model for $\chi$, proving Conjecture
\ref{conj1} in this case.

\subsection{Klingen parabolic}

Let $P=MN$ be a Klingen parabolic with $M \cong k^* \times
\GL_2(k)$. Let $\mu \boxtimes \pi_1$ be an irreducible representation of $M$,
giving rise to the principal series representation $\pi$ of
$\GSp_4(k)$ by parabolic induction. Then the Langlands parameter of
the representation $\pi$ of $\GSp_4(k)$ is a representation
$$\sigma_{\pi}: W'_k\rightarrow \GSp_4({\Bbb C}).$$
 Assuming that the Langlands parameter of the
representation $\pi_1$ of $\GL_2(k)$ is $\sigma_1$, and that of
$\mu$ is $\mu$ itself, we have
$$\sigma_\pi = \sigma_1 \oplus \mu \cdot \sigma_1.$$
(This time twisting by $\chi:k^*\rightarrow {\Bbb C}^*$ takes
$(\mu,\pi_1)$ to $(\mu,\chi\pi_1)$.) The central character of $\pi$
is $\mu\det \sigma_1$, therefore the characters $\chi$ of $K^*$
appearing in Conjecture \ref{conj1} have the property that
$\chi|_{k^*}= \mu\cdot \det \sigma_1$, and these are the only
characters that we will consider in what follows.

By standard properties of the local root numbers, for $\sigma_\pi$
as above,

$$\epsilon(\sigma_\pi \otimes {\rm Ind}_K^k(\chi^{-1})) =
\epsilon(\sigma_1 \otimes {\rm Ind}_K^k(\chi^{-1})) \cdot
\epsilon(\sigma_1 \otimes \mu \otimes {\rm Ind}_K^k(\chi^{-1})).$$

Since $\chi|_{k^*} = \mu\cdot \det \sigma_1,$
for $V= \sigma_1 \otimes {\rm Ind}_K^k(\chi^{-1})$, we have
 $V^\vee \cong \sigma_1 \otimes \mu \otimes {\rm Ind}_K^k(\chi^{-1}),$
and therefore,
\begin{eqnarray*}
\epsilon(\sigma_\pi \otimes {\rm Ind}_K^k(\chi^{-1})) & = &
\epsilon(\sigma_1 \otimes {\rm Ind}_K^k(\chi^{-1})) \cdot
\epsilon(\sigma_1 \otimes \mu \otimes {\rm Ind}_K^k(\chi^{-1})) \\
& = & \det (\sigma_1 \otimes {\rm Ind}_K^k(\chi^{-1}))(-1) \\
& = &  [\det (\sigma_1)^2 \cdot \det({\rm Ind}_K^k(\chi^{-1}))^2](-1) \\
&= & 1.
\end{eqnarray*}

Therefore in this case conjecture \ref{conj1} predicts that  $\pi$
has Bessel model for all characters $\chi$ of $K^*$ with
$\chi|_{k^*}= \mu \det \sigma_1$, and this is what proposition 3.3
proves. (Note that the Klingen parabolic is not defined  for the
rank 1 form of $\GSp_4(k)$, so we do not need to consider
$\GSp_D(4)$ here unlike in the case of principal series arising from
Siegel parabolic.)

\section{Reducible principal series}

\subsection{Siegel Parabolic}\label{section5.1}
Let $P=MN$ be a Siegel parabolic with $M \cong \GL_2(k) \times {\Bbb
G}_m $. Let $\pi_1 \boxtimes \mu$ be an irreducible representation of $M$,
giving rise to the principal series representation $\pi=
Ps(\pi_1,\mu)$ of $\GSp_4(k)$ by parabolic induction.

Assume that the Langlands parameter of the representation $\pi_1$ of
$\GL_2(k)$ is $\sigma_1$, and that of $\mu$ is $\mu$ itself. By the
work of Shahidi, it is known that in case $\pi_1$ is supercuspidal,
the principal series representation $\pi= Ps(\pi_1,\mu)$ is
reducible if and only if $\det \sigma_1 =
 |\cdot|^{\pm 1}$. In this case, $\pi= Ps(\pi_1,\mu)$  has two composition
factors, and the Langlands parameter of the generic component is
 $$\sigma = \mu\sigma_1 \oplus (\mu \sqrt{\det\sigma_1}\otimes {\rm St}_2),$$
where ${\rm St}_n$ denotes the $n$-dimensional irreducible
representation of $\SL_2({\Bbb C})$, and that of the other component
is,
 $$\sigma' = \mu\sigma_1 \oplus \mu \cdot{(1\oplus \det\sigma_1)};$$
of course we are assuming that $\det \sigma_1 = |.|^{\pm 1}$,
so $\sqrt{\det \sigma_1} = |.|^{\pm 1/2}$.

Since Conjecture \ref{conj1} is invariant under twisting by a character
of $\GSp_4(k)$, which is identified to a character of $k^*$ via the similitude
character $\GSp_4(k) \rightarrow k^*$, 
we assume by choosing $\mu$ appropriately that the
Langlands parameter of $\pi$ is
 $$\sigma = \sigma_1 \oplus  {\rm St}_2,$$
with $\sigma_1$ of trivial determinant.

We need to calculate the epsilon factor
$$\epsilon(\sigma \otimes {\rm Ind}_K^k(\chi^{-1})) =
\epsilon(\sigma_1 \otimes {\rm Ind}_K^k(\chi^{-1}))
\epsilon({\rm St}_2 \otimes {\rm Ind}_K^k(\chi^{-1})) $$
which takes the value $\pm 1$, and by the theorem of
Saito and Tunnell takes the value $-1$ if and only if
the character $\chi$ of $K^*$ appears in exactly one of the
two representations $\pi_1, {\rm St}_2$.

By the calculation of the Bessel model for a principal series representation,
we already know that the principal series representation has Bessel models
exactly for those characters $\chi$ of $K^*$ which appear in the
restriction of the representation $\pi_1$ to $K^*$.

Thus either of the following two statements implies the other:

\begin{enumerate}
\item The generic component of the principal series representation
has exactly those characters of $K^*$ which appear in $\pi_1$ and in the
Steinberg representation ${\rm St}_2$.

\item The other component of the principal series representation
has exactly those characters of $K^*$ (trivial on $k^*$) which
appear in $\pi_1$ but not in the Steinberg representation ${\rm
St}_2$. Recalling that the Steinberg representation of $\PGL_2(k)$
contains all characters of $K^*$ (trivial on $k^*$) except for the
trivial character, we conclude that the other component of the
principal series has only the trivial character of $K^*$ in the
Bessel model if the trivial character of $K^*$ appears in $\pi_1$,
and that the Bessel model (of the small component of the reducible
principal series) is zero if the trivial character of $K^*$ does not
appear in $\pi_1$.

\end{enumerate}

The method of theta correspondence proves second of the above assertions, so
we are done in this case.

\subsection{Klingen parabolic}\label{section5.2}

Let $P=MN$ be a Klingen parabolic with $M \cong k^* \times
\GL_2(k)$. Let $\mu \boxtimes \pi_1$ be an irreducible representation of $M$,
giving rise to the principal series representation $\pi$ of
$\GSp_4(k)$ by parabolic induction.
 Assume that the Langlands parameter of the
representation $\pi_1$ of $\GL_2(k)$ is $\sigma_1$, and that of
$\mu$ is $\mu$ itself. It is known that if $\pi_1$ is reducible, the
principal series representation $\pi$ is reducible if and only if
$\mu = \omega |\cdot|^{\pm 1}$ for a nontrivial quadratic character
$\omega$ of $k^*$. Assume without loss of generality that $\mu =
\omega |\cdot|$. When reducible, one of the components is a discrete
series representation, with parameter $\sigma_\pi$ which is
$$\sigma_\pi = |\cdot|^{\frac{1}{2}}\sigma_1 \otimes {\rm St}_2, $$
and that of the smaller representation is
$$\sigma_\pi = \sigma_1 \oplus |\cdot| \cdot \sigma_1.$$

Once again since Conjecture 1 is invariant under twisting, we assume
that the reducible principal series is such that the corresponding
generic representation has parameter
$$\sigma_\pi = \sigma_1 \otimes {\rm St}_2. $$
We now need to calculate the epsilon factor,
$$\epsilon(\sigma_\pi \otimes {\rm Ind}_K^k(\chi^{-1})),$$
for this choice of $\sigma_\pi$.

By generalities about epsilon factors,
$$\epsilon(V \otimes {\rm St}_n) = \epsilon(V)^n \det (-F,V^I)^{n-1},$$
where $V^I$ denotes the subspace of $V$ invariant under $I$. In our case,
this formula gives
\begin{eqnarray*}
\epsilon(\sigma_\pi \otimes {\rm Ind}_K^k(\chi^{-1}))
& = & \epsilon(\sigma_1 \otimes {\rm Ind}_K^k(\chi^{-1}) \otimes  {\rm St}_2) \\
& = & \det (-F, V^{I})
\end{eqnarray*}
where $V = \sigma_1 \otimes {\rm Ind}_K^k(\chi^{-1})$. If $V^{I} \neq 0,$ as
$V$ is self-dual, so is $V^I$ as a representation space for the cyclic group
$<F>$. Write $V^I = \sum \chi_i.$ The characters $\chi_i$ with $\chi_i^2 \neq
1$ do not contribute anything to $\det (-F, V^{I})$. A character $\chi_i$
with $\chi_i^2 = 1$, but $\chi_i \neq 1$ also does not contribute to
$\det (-F, V^{I})$. Therefore $\det (-F, V^{I})= (-1)^r$ where $r$ is the
number of copies of the trivial representation of $W_k$ in $V$.
Assuming that
$\sigma_1$ is irreducible, it follows that
$\epsilon(\sigma_\pi \otimes {\rm Ind}_K^k(\chi^{-1})) = -1$ if and
only if $\sigma_1$ and  ${\rm Ind}_K^k(\chi^{-1})$ are isomorphic.

By the calculation of the Bessel model for a principal series representation
done in the last section,
we already know that the principal series representation has Bessel models
for all characters $\chi$ of $K^*$ (with the usual restriction on the
central character).

Thus either of the following two statements implies the other:

\begin{enumerate}
\item The generic component of the principal series representation
$\pi = Ps(\omega |\cdot |, \pi_1)$
has all characters of $K^*$
whose restriction to $k^*$ is the central character of $\pi$ except $\chi$ and
$\chi^{-1}$ if $\sigma_1 = {\rm Ind}_K^k(\chi)$.

\item The other component of the principal series representation
has exactly  characters  $\chi$ and $\chi^{-1}$ of $K^*$
if $\sigma_1 = {\rm Ind}_K^k(\chi)$.

\end{enumerate}

Unfortunately, as neither of the two representations appear as theta lift from
$GO(4)$, we are unable to prove either of the above assertions.

\section{The Steinberg representation}

Let $B$ denote the standard minimal parabolic of $\GSp_4(k)$, and
$P$, $Q$ respectively Siegel and Klingen parabolic subgroups. Let
${\rm St}_4$ denote the Steinberg representation of $\GSp_4(k)$, as
well as its Langlands paramter, which is the four dimensional
representation of the $\SL_2({\Bbb C})$ part of the Weil-Deligne
group $W'_k$. By construction, ${\rm St}_4$ is the alternating sum
of certain representations induced from characters of $B, P,Q,
\GSp_4(k)$ to $\GSp_4(k)$. Ignoring the trivial representation which
does not contribute to the $\psi$-Bessel models for nondegenerate
characters, Steinberg representation can be realized as the quotient
of a representation induced from an irreducible representation of
say $P=MN$  which is twist of the Steinberg representation   of $M$
by a representation of $\GSp_4(k)$ which is induced from a character
of $Q$.

\begin{prop}
Let $K$ be a quadratic algebra, $\chi$ a character of $K^*$ which is
trivial on $k^*$. 
Let $\psi$ be a nondegenerate character
of $N$, left invariant by $K^*$ sitting inside $M$. Then the
Steinberg representation of $\GSp_4(k)$ has Bessel model for $\chi$
if and only if
 $\chi$
is a non-trivial character of $K^*$, in case $K$ is a field, and
for all  characters of $K^*$ if $K = k \oplus k$.
\end{prop}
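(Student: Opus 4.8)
The plan is to realize $\mathrm{St}_4$, modulo the trivial representation, as a quotient of a representation induced from a maximal parabolic, and then to feed the Bessel-model calculations of Section~2 through an exact sequence. As indicated in the paragraph preceding the proposition, one writes $\mathrm{St}_4$ (up to the irrelevant trivial summand, which contributes nothing to $\psi$-Bessel models for nondegenerate $\psi$) as the cokernel of a map into $\mathrm{ind}_P^{\GSp_4(k)}(\mathrm{St}_M \otimes \mu)$, where $\mathrm{St}_M$ is the Steinberg of $M\cong\GL_2(k)\times\Bbb G_m$, by a subrepresentation which is itself induced from a character of the Klingen parabolic $Q$. Concretely, there is a short exact sequence of $\GSp_4(k)$-modules $0 \to \mathrm{ind}_Q^{\GSp_4(k)}(\text{character}) \to \mathrm{ind}_P^{\GSp_4(k)}(\mathrm{St}_M\otimes\mu) \to \mathrm{St}_4 \to 0$ (again ignoring the trivial representation throughout). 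Since the twisted Jacquet functor $\pi \mapsto \pi_\psi$ is exact, applying it and then restricting to $K^*$ gives an exact sequence of $K^*$-modules relating the three Bessel modules.

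First I would compute the two outer terms. By Proposition~\ref{prop2.1}, the middle term's Bessel module is $(\mathrm{St}_M\otimes\mu)$ restricted to $M_\psi = N(K^*)$; restricting further to $K^*$ this is the restriction of the Steinberg representation of $\GL_2(k)$ (suitably twisted so that the central character is as dictated by $\chi|_{k^*}$, which by hypothesis is trivial) to $K^*$. By the Saito--Tunnell/Waldspurger description of $\mathrm{St}_2|_{K^*}$, this contains every character of $K^*$ trivial on $k^*$ \emph{except} the trivial character when $K$ is a field, and contains every character of $K^*$ trivial on $k^*$ when $K = k\oplus k$ — in the field case, $\mathrm{St}_2$ restricted to a nonsplit torus $K^*/k^*$ is exactly all nontrivial characters, with multiplicity one. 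For the sub, $\mathrm{ind}_Q^{\GSp_4(k)}$ of a one-dimensional representation: its Bessel module is governed by Proposition~\ref{prop2.4}, so it is zero unless $K = k\oplus k$, in which case it is one-dimensional, carrying the character of $K^*$ obtained by restricting $\rho$. One then has to identify which character this is; by tracking the twist it should be the trivial character of $K^*$ (the one \emph{missing} from the Steinberg in the field case, consistent with the bookkeeping).

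The final step is to read off $(\mathrm{St}_4)_\psi|_{K^*}$ as the cokernel. In the field case, the sub is zero, so $(\mathrm{St}_4)_\psi|_{K^*} \cong \mathrm{St}_2|_{K^*}$ as $K^*$-modules (modulo the trivial representation, which has zero $\psi$-Bessel module), giving exactly the nontrivial characters of $K^*$ — which is the claim. In the split case $K = k\oplus k$, the middle term has all characters of $K^*$ trivial on $k^*$, the sub has precisely the trivial character, and one must check the connecting map is injective onto that summand so that the cokernel still contains every character (each with multiplicity one); this follows from the explicit description of the inclusion $\mathrm{ind}_Q \hookrightarrow \mathrm{ind}_P$ (it is the standard intertwining/inclusion of Jacquet modules, and on the level of the open orbit it is genuinely nonzero), so that the trivial character, appearing with multiplicity one in the middle and one in the sub, survives with multiplicity $\geq 0$; but by Theorem~\ref{thm1.1} the Bessel multiplicity is at most one, and a separate argument (e.g. comparing with the generic member of the packet, or directly from Proposition~\ref{prop2.4} applied to the full degenerate principal series) shows it is exactly one. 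The main obstacle is precisely this last point: controlling the connecting homomorphism in the split case, i.e. making sure that passing to the cokernel does not kill the trivial character of $K^*$. I would handle it either by invoking multiplicity-one (Theorem~\ref{thm1.1}) together with an independent lower bound — producing an explicit $\chi$-Bessel functional on $\mathrm{St}_4$ for $K = k\oplus k$, for instance by realizing $\mathrm{St}_4$ as a submodule in the other resolution (as a sub of $\mathrm{ind}_Q$ of Steinberg-of-$\GL_2$, rather than a quotient of $\mathrm{ind}_P$) and reusing Proposition~\ref{prop2.4} — or by checking directly that the intertwining map between the two induced representations vanishes on the $\psi$-coinvariants supported on the relevant double coset. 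Either route closes the split case; the field case is immediate once the sub is seen to vanish.
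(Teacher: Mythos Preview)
Your overall strategy --- place $\mathrm{St}_4$ in an exact sequence with a Siegel principal series (whose Bessel module is $\mathrm{St}_2|_{K^*}$ by Proposition~\ref{prop2.1}) and a degenerate Klingen principal series (controlled by Proposition~\ref{prop2.4}), then take the $\psi$-Jacquet functor --- is exactly the paper's. The case $K$ a field, and the case $K$ split with $\chi$ nontrivial, go through essentially as you describe.

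The difference is the direction of the exact sequence, and it matters precisely in the case you flag. The paper's proof realizes $\mathrm{St}_4$ as a \emph{subrepresentation}:
\[
0 \rightarrow \mathrm{St}_4 \rightarrow Ps \rightarrow \pi \rightarrow 0.
\]
Taking $\psi$-Jacquet for split $K$ yields $0 \to (\mathrm{St}_4)_\psi \to \mathrm{St}_2|_T \to \mathbb{C} \to 0$, and since $\mathrm{St}_2$ (locally constant functions on $\mathbb{P}^1(k)$ modulo constants) has a \emph{unique} $T$-trivial quotient, this sequence must coincide with the standard one
\[
0 \rightarrow \mathcal{S}(k^*) \rightarrow \mathrm{St}_2 \rightarrow \mathbb{C} \rightarrow 0.
\]
Hence $(\mathrm{St}_4)_\psi \cong \mathcal{S}(k^*)$ as a $T$-module, and every character of $T$ trivial on scalars --- including the trivial one --- appears as a quotient. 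No cokernel bookkeeping, no appeal to Theorem~\ref{thm1.1}, no separate lower bound is needed.

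Your quotient direction, by contrast, would force the trivial $T$-module $\mathbb{C}$ to \emph{embed} into $\mathrm{St}_2|_T$. But $\mathrm{St}_2$ has no $T$-fixed vectors: a $T$-invariant locally constant function on $\mathbb{P}^1(k)$ is constant on the open orbit $k^*$, hence by local constancy at $0$ and $\infty$ globally constant, hence zero in $\mathrm{St}_2$. So either your short exact sequence is not correct as written, or the $\psi$-Jacquet of your subrepresentation is not the one-dimensional trivial module you assume. Either way, your proposed patches do not close the gap: Proposition~\ref{prop2.4} applies only to one-dimensional $\rho$ on the Klingen Levi, not to $\mathrm{ind}_Q$ of the Steinberg of $\GL_2$; and the intertwining-map check you sketch is aimed at the wrong target. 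Switching to the sub realization, as the paper does, is the clean way through.
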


\begin{proof}

The  proposition is clear by combining
 propositions \ref{prop2.1} and \ref{prop2.4} in all cases except 
when $K=k \oplus k$, and $\chi$ is the trivial character. So the
rest of the proof will be for this case only, which is subtler
as it relies on understanding on semi-simplicity of $\chi$-Bessel
models.

The Steinberg representation sits in an
exact sequence of the form,
$$0 \rightarrow {\rm St}_4 \rightarrow Ps \rightarrow \pi \rightarrow 0$$
where $Ps$ is a principal series representation of $\GSp_4(k)$ induced
from the Siegel parabolic from 
an appropriate twist of the Steinberg representation of 
$M= \GL_2(k) \times k^*$, and $\pi$ is a representation of $\GSp_4(k)$
induced from a one dimensional representation of the Klingen parabolic (in fact
$\pi$ is a subrepresentation of such a representation with quotient
which is the one dimensional trivial representation of $\GSp_4(k)$, so does
not affect the calculation of Bessel models). 
From an earlier observation that the discriminant $\delta_P$ is trivial
on $K^*$, it does not matter which twist of the Steinberg of $\GL_2(k)$ is
used to construct the principal series $Ps$ on $\GSp_4(k)$ above.

Taking the twisted Jacquet functor with respect to the character $\psi$
of $N$, we get an exact sequence of $T$-modules where $T$ is the split
torus in $\GL_2(k)$,
$$0 \rightarrow {\rm St}_{4,\psi} 
\rightarrow Ps_\psi \rightarrow \pi_\psi \rightarrow 0.$$

From the calculation of the twisted Jacquet functor of principal
series done earlier, we get an exact sequence of $T$-modules,

$$0 \rightarrow {\rm St}_{4,\psi} 
\rightarrow {\rm St_2} \rightarrow {\Bbb C} \rightarrow 0$$
where ${\Bbb C}$ is the one dimensional trivial representation of $T$,
and ${\rm St}_2$ denotes the Steinberg representation of $PGL_2(k)$, now
thought of as a $T$-module.

Since the Steinberg representation of $\GL_2(k)$ can be  realized on the
space of locally constant functions modulo constant functions on 
${\Bbb P}^1(k)$, the following is an exact sequence to $T$-modules:

$$0 \rightarrow {\mathcal S}(k^*) \rightarrow {\rm St}_2 \rightarrow 
{\Bbb C} \rightarrow 0.$$

As one knows that the Steinberg representation of $\GL_2(k)$ has a unique
quotient on which $T$-operates trivially, the last two exact sequences
of $T$-modules must be the same, and therefore in particular
${\rm St}_{4,\psi}$ is isomorphic as a $T$-module to ${\mathcal S}(k^*)$.
This implies that 
${\rm St}_4$ has Bessel model for all characters of $T$
which are trivial on the scalars.
\end{proof}

We omit the simple
check that this proposition proves Conjecture \ref{conj1} for the
Steinberg representation, augmented by the following much simpler
proposition.

\begin{prop}
The Steinberg representation of $\GSp_D(4)$ has Bessel model for a
character $\chi$ of $K^*$ if and only if $K$ is a field, and $\chi$
is the trivial character of $K^*$.
\end{prop}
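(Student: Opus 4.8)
The plan is to compute the $\psi$-Bessel model of the Steinberg representation of $\GSp_D(4)$ directly, paralleling the argument just given for $\GSp_4(k)$ but using the rank-$1$ group, where the combinatorics is much simpler because there is a unique proper parabolic. First I would recall from the discussion opening this section that, ignoring the trivial representation (which contributes nothing to $\psi$-Bessel models for a nondegenerate $\psi$), the Steinberg representation $\mathrm{St}$ of $\GSp_D(4)$ can be realized as a quotient of a representation induced from the Siegel parabolic $P=MN$ of $\GSp_D(4)$, with $M\cong D^*\times k^*$ and $N$ the additive group of trace-zero elements of $D$. Concretely there is an exact sequence
\begin{equation*}
0 \rightarrow \mathrm{St} \rightarrow \mathrm{ind}_P^{\GSp_D(4)}(\mathrm{St}_{D^*}\boxtimes \mu) \rightarrow \pi \rightarrow 0,
\end{equation*}
where $\mathrm{St}_{D^*}$ is the (one-dimensional) "Steinberg" of $D^*$, i.e.\ a quadratic character, $\mu$ an appropriate character of $k^*$, and $\pi$ is a suitable one-dimensional piece (trivial across $N$), which therefore has zero $\psi$-Bessel model. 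So taking the twisted Jacquet functor $(\cdot)_\psi$ and invoking Proposition \ref{prop2.2}, one finds $\mathrm{St}_\psi \cong (\mathrm{St}_{D^*}\boxtimes\mu)|_{M_\psi}$ as a module for $M_\psi = N(K^*)$, where $K^*\subset D^*$ is the relevant torus.

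Next I would make the algebraic structure explicit. As in Section~2, the nondegenerate character $\psi$ of $N$ is given by a nonzero element of the trace-zero part of $D$, and its stabilizer in $M = D^*\times k^*$ is the normalizer of a torus $K^* \subset D^*$; since $D$ is a division algebra, $K$ is necessarily a field (no split tori occur), and $K/k$ is the quadratic field generated by that trace-zero element. Because $\mathrm{St}_{D^*}$ is one-dimensional, $\mathrm{St}_\psi$ is one-dimensional as well, and the character of $K^*$ on which it is supported is simply $\mathrm{St}_{D^*}|_{K^*}$ twisted by $\mu$ through the similitude embedding. Tracking the central character normalization as in the $\GSp_4(k)$ argument, the restriction of the quadratic character $\mathrm{St}_{D^*}$ to $K^* \subset D^*$ is \emph{trivial} — this uses that the reduced norm $D^*\to k^*$ restricted to a quadratic subfield $K^*$ is the field norm $N_{K/k}$, composed with which the relevant quadratic character of $k^*$ pulls back trivially, or equivalently that $\mathrm{St}_{D^*}$ is trivial on the derived group and on norms. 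Hence $\mathrm{St}_\psi$ is exactly the trivial character of $K^*$ (trivial on $k^*$), and it appears with multiplicity one. This matches the claim: Bessel model is nonzero precisely for $K$ a field and $\chi$ trivial.

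I expect the main obstacle to be pinning down the two nontrivial points underlying the computation: first, verifying the realization of $\mathrm{St}$ as a quotient of $\mathrm{ind}_P^{\GSp_D(4)}(\mathrm{St}_{D^*}\boxtimes\mu)$ with the complementary piece genuinely trivial across $N$ — this requires knowing the submodule structure of the degenerate principal series of $\GSp_D(4)$ from the Siegel parabolic, which should follow from the alternating-sum (Bernstein–Zelevinsky / Casselman) description of $\mathrm{St}$ applied to the rank-$1$ form, exactly as was invoked for $\GSp_4(k)$; and second, the precise identification of which character of $K^*$ survives in $(\mathrm{St}_{D^*}\boxtimes\mu)|_{M_\psi}$ after the central-character bookkeeping, i.e.\ checking the triviality of $\mathrm{St}_{D^*}|_{K^*}$ against the normalization forced by $\chi|_{k^*}$ being the central character. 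Once these are in hand, the proposition follows immediately, with no subtlety of the semisimplicity type that complicated the $\GSp_4(k)$ case — here $\mathrm{St}_\psi$ is one-dimensional, so there is nothing further to resolve. I would close by remarking that the reason the split case $K = k\oplus k$ simply does not arise is that $D$ being division forces every torus in $D^*$ to be anisotropic, so no split quadratic algebra can centralize a nondegenerate $\psi$ inside $\GSp_D(4)$.
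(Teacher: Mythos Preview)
Your proposal is essentially correct and follows exactly the argument the paper leaves implicit as ``much simpler''. The logic---the Steinberg of $\GSp_D(4)$ sits in a short exact sequence with a principal series from the unique parabolic and a one-dimensional piece; apply Proposition~\ref{prop2.2} and exactness of the twisted Jacquet functor; the one-dimensional piece contributes nothing to a nondegenerate $\psi$; and $K$ must be a field since $D$ is division---is sound, and the absence of a Klingen parabolic is precisely why no semisimplicity subtlety arises.

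There is, however, one genuine confusion in the middle of your argument. You say the inducing data is $\mathrm{St}_{D^*}\boxtimes\mu$ with $\mathrm{St}_{D^*}$ ``a quadratic character''. This is wrong: the Jacquet--Langlands transfer of the Steinberg of $\GL_2(k)$ is the \emph{trivial} representation of $D^*$, not a quadratic character. More directly, since $\GSp_D(4)$ has semisimple rank~$1$, its Steinberg is simply the nontrivial constituent of the unnormalized $\mathrm{Ind}_P^G(\mathbf{1})$, so the inducing character is trivial (and $\delta_P$ is trivial on $M_\psi$, as already noted). Once this is corrected, the identification of the surviving character of $K^*$ as the trivial one is immediate. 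Your norm argument as written would \emph{not} give triviality for a general quadratic character $\omega\circ\mathrm{Nrd}$ restricted to $K^*$: that composition $\omega\circ N_{K/k}$ is trivial only when $\omega\in\{1,\omega_{K/k}\}$, so the step ``pulls back trivially'' is unjustified. Fortunately it is also unnecessary.
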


\section{Summing up discussion on principal series}

Since the Gross-Prasad conjecture is about generic $L$-packets, we
need to isolate those irreducible representations of $\GSp_4(k)$
which arise as subquotients of principal series representations, and
have a generic member in their $L$-packet.

We begin by observing that from what is called the {\it Standard
modules conjecture}, which is a theorem for $\GSp_4(k)$, a generic
representation cannot be a proper Langlands quotient, i.e., either
it is already tempered, or it is a full induced representation.

If the representation is tempered, then the sum of the
representations in its  $L$-packet is obtained by inducing a unitary
discrete series representation of a parabolic subgroup of
$\GSp_4(k)$. Thus methods of the previous sections are enough to
handle all representations which are either generic, or are tempered
but not discrete series representations.

Among discrete series representations of $\GSp_4(k)$ which arise as
subquotients of principal series representations, there are the
following possibilities.

\begin{enumerate}
\item Those which arise from the
principal series $Ps(\tau |\cdot |^{1/2}, |\cdot |^{-1/2})$ for the
Siegel parabolic where $\tau$ is a discrete series representation of
$\PGL(2)$ which is not Steinberg. Its $L$-packet has size 2, the
other member being a supercuspidal representation obtained as the
theta lift from $\GSO_4(k)= [D^* \times D^*]/\Delta k^*$ of
the representation $(\lambda, 1)$ of $D^* \times D^*$ where
$\lambda$ is the representation of $D^*$ associated by the
Jacquet-Langlands correspondence to the representation $\tau$ of
$\PGL(2)$. The theta lift is supercuspidal, and the methods of Weil
representation are enough to treat this supercuspidal
representation. The  principal series $Ps(\tau |\cdot |^{1/2},
|\cdot |^{-1/2})$ has another irreducible component which is not
generic, and which is obtained as the theta lift from $\GSO(2,2) =
[\GL_2(k) \times \GL_2(k)]/\Delta k^*$ of the representation
$(\tau,1)$ of $\GL_2(k) \times \GL_2(k)$, and therefore the methods
of the Weil representations give information about the Bessel models
of this other piece of the principal series, and therefore also of
the discrete series component.

\item  Those which arise from the
principal series $Ps(\omega |\cdot|, \pi)$ for the Klingen parabolic
where $\pi$ is a discrete series representation of $\GL(2)$ with
$\omega^2=1$, $\omega \not = 1$, and $\omega \otimes \pi = \pi$. Its
$L$-packet has size one. As mentioned already, we have not been able
to handle these discrete series representations.

\item Twisted Steinberg; these were handled in the previous section.

\end{enumerate}

Finally we note that for $\GSp_4(k)$, a generic $L$-packet
containing a non-generic representation must be tempered, so we have
now handled all (except 2.) generic $L$-packets containing a
representation which is nonsupercuspidal.

\begin{rem} We have so far not discussed in any detail
the unitary principal series
representation of $GSp_4(k)$ which arises from the representation
$1 \boxtimes \pi$
of $k^* \times GL_2(k)$, the Levi subgroup of the Klingen parabolic,
 with $\pi$ a discrete series representation of $GL_2(k)$. The
induced representation in this case is a direct sum of two
irreducible representations, each with Langlands parameter $\sigma
\oplus \sigma$ where $\sigma$ is the Langlands parameter of $\pi$.
It is known that these two components of the principal series
representation can be obtained as theta lift from $GSO(V^a) =[D^*
\times D^*]/\Delta k^*$ of the representation $\pi' \boxtimes \pi'$,
where $\pi'$ is the Jacquet-Langlands lift of $\pi$ to $D^*$, and
the theta lift from $GSO(V^s) =[GL_2(k) \times GL_2(k)]/\Delta k^*$
of the representation $\pi \boxtimes \pi$. Therefore methods of Weil
representation developed later give finer information about the
Bessel periods of these two components of the principal series
representations exactly as predicted by Conjecture \ref{conj1}.
\end{rem}

\section{Bessel model of the Weil representation}

The aim of this section will be to calculate the twisted Jacquet functor of the
Weil representation of a dual reductive pair $(G_1,G_2)$ with respect to a
character $\psi$ of the unipotent radical $N_2$ of a maximal parabolic $P_2=M_2N_2$
of the group $G_2$. We carry out the calculation of the twisted Jacquet functor
 only for the Siegel parabolic of a symplectic group, so $G_2=\Sp(W)$.
Recall that for any representation $\pi$ of $G_2$, the
twisted Jacquet functor $\pi_\psi$ is the maximal quotient of $\pi$ on which
$N_2$ operates via $\psi$. If $M_\psi$ denotes the maximal subgroup of $M_2$
which takes $\psi$ to itself under the inner conjugation action of $M_2$ on $N_2$, then
$\pi_\psi$ is a module for $M_\psi$, and therefore in the context of a dual
reductive pair $(G_1,G_2)$, for $G_1 \times M_\psi$.

We recall that in a famous work, Kudla calculated 
 the standard Jacquet module of the Weil representation.
We carry out the calculation of the twisted Jacquet functor
 only for the Siegel parabolic.
Actually the simple calculation we perform in this section is known
in the literature in both the local and global contexts, see e.g.
\cite{Roberts1, Ra}. However, since we anyway will have to recall
the notation and the results, we have preferred to give an
independent co-ordinate free treatment which will be convenient for
our purposes.

We now recall some elementary properties of the Weil representation for this purpose.

Let $W = W_1 \oplus W_1^\vee$ be a symplectic vector space over a
local field $k$ together with its natural symplectic pairing. Given
a quadratic space $q:V \rightarrow k$, the Weil representation gives
rise to a representation of $\O(V) \times \Sp(W)$ on ${\mathcal S}(V
\otimes W_1^\vee)$. In this representation, elements of $S{\rm
Hom}(W_1^\vee,W_1) = \{\phi \in {\rm Hom}(W_1^\vee,W_1)| \phi =
\phi^\vee\} \cong {\rm Sym}^2W_1,$ which can be identified to the
unipotent radical of the standard Siegel parabolic in $Sp(W)$,
operates on ${\mathcal S}(V\otimes W_1^\vee)$ by
$$(n\cdot f)(x) = \psi((q \otimes q_n)x)f(x),$$
where $n$ is represented by  $q_n \in {\rm Sym}^2W_1$ which gives
rise to a quadratic form $q_n: W_1^\vee \rightarrow k,$ which together
with the  quadratic form  $q:V \rightarrow k$, gives rise to the quadratic
form $q\otimes q_n:V \otimes W_1^\vee \rightarrow k$.

\

The Weil representation associated to a dual reductive pair $(\O(V),
\Sp(W))$ is actually a representation of $\G[\O(V) \times \Sp(W)]$
where $\G[\O(V) \times \Sp(W)]$ is defined to be the group of pairs
$(g_1,g_2) \in \GO(V) \times \GSp(W)$ such that the similitude
factors for $g_1$ and $g_2$ are the same. We briefly recall this,
referring to \cite{Harris-Kudla} for  details on this.

The exact sequence
$$1 \rightarrow \Sp(W)  \rightarrow  \G[\O(V) \times \Sp(W)] \rightarrow \GO(V)
\rightarrow 1,$$ has a natural splitting  $\GO(V) \rightarrow
\G[\O(V) \times \Sp(W)] $ depending on a complete polarization $W =
W_1 \oplus W_2$ of the symplectic space $W$  in which $g \in \GO(V)$
goes to $(g,\mu(g)) \in  \G[\O(V) \times \Sp(W)]  $ where $\mu(g)$
is the element  of  $\GSp(W)$ which acts by 1 on $W_1$ and by
$\nu(g)$ on $W_2$ where $\nu(g)$ is the similitude factor of $g$.
The Weil representation realized on $\mathcal{S}(V \otimes
W_1^\vee)$ has the natural action of $\GO(V)$ operating as
\begin{equation*}
L(h)\varphi(x)= |\nu(h)|^{-mn/2}\varphi(h^{-1}x),
\end{equation*}
where $m = \dim V,$ $2n = \dim W,$ and $\nu(h)$ is the similitude
factor of $h$. The group $\GL(W_1)$ sits naturally inside $Sp(W)$,
and its action on $\mathcal{S}(V\otimes W_1^\vee)$ is given by
\begin{equation*}
L(g)\varphi(x)= \chi_V(\det g)|\det g|^{m/2}\varphi(gx),
\end{equation*}
where $\chi_V$ is the quadratic character of $k^*$ given in terms of
the Hilbert symbol as $\chi_V(a)=(a, {\rm disc} V)$ with ${\rm disc}
V$ the normalised discriminant of $V$. These actions  together with
the action of the Weyl group element (which acts on
$\GL(W_1)$ sitting inside $\Sp(W)$ through $A\rightarrow
{}^tA^{-1}$) of $Sp(W)$ through Fourier
transforms on $\mathcal{S}(V\otimes W_1^\vee)$,
gives the action of $ \G[\O(V) \times \Sp(W)]$ on
$\mathcal{S}(V \otimes W_1^\vee)$. But this analysis of the action
of the Weyl group will not be needed anywhere in this work.

The Weil representation thus gives rise to a representation of
$\G[\O(V) \times \Sp(W)]$; inducing this representation to $\GO(V)
\times \GSp(W)$, we get, the `big Weil representation', say
$\Omega$, of $\GO(V) \times \GSp(W)$. Given an irreducible
representation $\pi$ of $\GO(V)$, there exists a representation
$\Theta(\pi)$ of $\GSp(W)$ of finite length, such that $\Theta(\pi)
\otimes \pi$ is the maximal $\pi$-isotypic quotient of $\Omega$. It
is known that the representation $\Theta(\pi)$ of $\GSp(W)$ has a
unique irreducible quotient $\theta(\pi)$. When one talks about the
theta correspondence, one means the correspondence $\pi \rightarrow
\theta(\pi)$; however, when one calculates Jacquet or twisted
Jacquet functor of the Weil representation, it is invariably
$\Theta(\pi)$ that one encounters. Thus most of the applications are
restricted to the case when one can in fact prove that $\Theta(\pi)
= \theta(\pi)$ which is the case for example when $\pi$ is
supercuspidal.

\begin{lem}\label{lemma2} Let $x$ be a vector in $V \otimes W^\vee_1$, considered as a homomorphism
$x: W_1 \rightarrow V$, as well as the homomorphism on duals $x^\vee: V^\vee \rightarrow W^\vee_1$. Then
for quadratic spaces $q_V: V \rightarrow k$, and $ q_W: W^\vee_1\rightarrow k$, equivalently considered through
 homomorphisms $q_V: V \rightarrow V^\vee$, and $q_W: W^\vee_1 \rightarrow W_1$,
the trace of the map from $W_1$ to $W_1$ given as the compositum of maps,
$$W_1 \overset{x}{\rightarrow} V \overset{q_V}{\rightarrow} V^\vee \overset {x^\vee} {\rightarrow} W^\vee_1
\overset{q_W}{\rightarrow} W_1,$$
is the same as the value of the quadratic form $q_V \otimes q_W$ on the vector $x \in V \otimes W^\vee_1$,
which is of course the same as the trace of the map from $k$ to $ k$, obtained as the compositum of maps:
$$k \overset{x}{\rightarrow} V \otimes W^\vee_1 \overset{q_V \otimes q_W}{\longrightarrow} V^\vee\otimes W_1
 \overset {x^\vee} {\rightarrow} k.$$
\end{lem}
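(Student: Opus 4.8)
The statement is a purely formal identity in linear algebra, and the plan is to unwind the canonical identifications and then check an equality of two double sums. I would first observe that the \emph{second} asserted equality needs no work: by definition the quadratic form $q_V\otimes q_W$ on $V\otimes W^\vee_1$, regarded as the map $q_V\otimes q_W\colon V\otimes W^\vee_1\to V^\vee\otimes W_1$, sends a vector $x$ to the scalar obtained by pairing $(q_V\otimes q_W)(x)\in V^\vee\otimes W_1$ against $x\in V\otimes W^\vee_1$ under the canonical perfect pairing between $V\otimes W^\vee_1$ and its dual $V^\vee\otimes W_1$; and the arrow labelled $x^\vee\colon V^\vee\otimes W_1\to k$ in the second diagram is precisely ``pair against $x$''. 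So the whole lemma reduces to the \emph{first} equality, namely $\tr\bigl(q_W\circ x^\vee\circ q_V\circ x\bigr)=(q_V\otimes q_W)(x)$, where the left-hand side is the trace of the endomorphism of $W_1$ appearing in the first diagram.

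To prove that equality I would fix a basis $f_1,\dots,f_r$ of $W_1$ with dual basis $f^\vee_1,\dots,f^\vee_r$ of $W^\vee_1$, and write $x=\sum_i v_i\otimes f^\vee_i$ where $v_i=x(f_i)\in V$. Using the defining relation $x^\vee(\xi)(w)=\xi\bigl(x(w)\bigr)$ for $\xi\in V^\vee$, $w\in W_1$, a direct expansion of the trace gives
$$\tr\bigl(q_W\circ x^\vee\circ q_V\circ x\bigr)=\sum_{i,j}q_V(v_i)(v_j)\,f^\vee_i\bigl(q_W(f^\vee_j)\bigr),$$
while, since $(q_V\otimes q_W)(x)=\sum_i q_V(v_i)\otimes q_W(f^\vee_i)$ and the canonical pairing sends $(v\otimes\xi,\ \eta\otimes w)$ to $\eta(v)\xi(w)$,
$$(q_V\otimes q_W)(x)=\sum_{i,j}q_V(v_i)(v_j)\,f^\vee_j\bigl(q_W(f^\vee_i)\bigr).$$
Relabelling $i\leftrightarrow j$ in the second sum and invoking the symmetry $q_V(v_i)(v_j)=q_V(v_j)(v_i)$ of the bilinear form attached to the quadratic space $V$ turns the second sum into the first, which finishes the argument. (One could equally well use the symmetry of $q_W$ with a symmetric arrangement of the composite.)

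I do not expect a genuine obstacle here; the only thing that needs care is the bookkeeping of the canonical isomorphisms ${\rm Hom}(W_1,V)\cong V\otimes W^\vee_1$, ${\rm Hom}(W_1,W^\vee_1)\cong W^\vee_1\otimes W^\vee_1$ and ${\rm End}(W_1)\cong W_1\otimes W^\vee_1$, the identification of $\tr$ with the evaluation contraction, and keeping track of which tensor factor $x^\vee$ acts on — and, above all, making sure the symmetry of the form is used in exactly the right place, since that is the one genuinely nonformal input. A basis-free rendering is possible: the composite $x^\vee\circ q_V\circ x\in{\rm Hom}(W_1,W^\vee_1)=W^\vee_1\otimes W^\vee_1$ is the pullback form $x^{*}q_V$, and its contraction against $q_W\in W_1\otimes W_1$ is the desired trace, which unwinds to $(q_V\otimes q_W)(x)$. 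Introducing a basis, however, makes the role of the symmetry most transparent, so that is the route I would take.
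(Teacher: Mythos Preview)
Your proof is correct. The paper states this lemma without proof, treating it as a routine linear-algebra identity; your explicit verification by writing $x=\sum_i v_i\otimes f^\vee_i$, expanding both sides as double sums, and invoking the symmetry of $q_V$ is exactly the kind of check the authors are implicitly leaving to the reader.
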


Note that  one can identify the dual of the $k$-vector space
 $S\Hom[W^\vee_1, W_1]$ of self-dual homomorphisms from $W^\vee_1$ to $W_1$
to $S\Hom[W_1, W^\vee_1]$ via the natural pairing obtained by taking trace,
$$ S\Hom[W_1,W^\vee_1] \times S\Hom[W^\vee_1, W_1] \rightarrow \Hom [W_1,W_1] \overset{tr} {\rightarrow } k.$$
Thus characters $\psi: N_2 \rightarrow {\mathbb C}^*$ can be identified to symmetric elements
in $\Hom[W_1,W^\vee_1]$. As the map from $W_1$ to $W_1$ in the above lemma is the compositum
of two maps, one from $W_1$ to $W^\vee_1$, and the other from $W^\vee_1$ to $W_1$, and that the first
map is nothing but the restriction of the quadratic form on $V$ to $W_1$ via the map $x:W_1 \rightarrow V$,
the following corollary of the previous lemma is clear.

\begin{cor}\label{corollary1} The twisted Jacquet functors of the Weil representation corresponding to the
dual reductive pair $(\O(V), \Sp(W))$ are nonzero exactly for the
characters of the unipotent radical of the Siegel parabolic of
$\Sp(W)$ which correspond to the `restriction' of quadratic form on
$V$ to $W_1$ via a linear map $x: W_1 \rightarrow V$.
\end{cor}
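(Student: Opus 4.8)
The plan is to deduce Corollary~\ref{corollary1} directly from Lemma~\ref{lemma2} together with the explicit formula for the action of the unipotent radical $N_2$ of the Siegel parabolic on the Schr\"odinger model $\mathcal{S}(V \otimes W_1^\vee)$ recalled just before the lemma, namely $(n\cdot f)(x) = \psi((q\otimes q_n)x)f(x)$ for $n$ corresponding to $q_n \in {\rm Sym}^2 W_1 \cong S\Hom[W_1^\vee,W_1]$. First I would fix a character $\psi_n$ of $N_2$, which by the stated self-duality $S\Hom[W_1,W_1^\vee] \times S\Hom[W_1^\vee,W_1] \to k$ via trace corresponds to a symmetric element $\beta \in \Hom[W_1,W_1^\vee]$; the twisted Jacquet module of $\Omega$ with respect to $\psi_n$ is then the space of coinvariants $\mathcal{S}(V\otimes W_1^\vee)_{N_2,\psi_n}$, which by the formula above is the quotient of $\mathcal{S}(V\otimes W_1^\vee)$ by the closure of the span of functions $f - \psi\big((q\otimes q_n)(\cdot)\big)\,\psi_n(n)^{-1} f$.

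The key observation is the standard fact (for the Weil representation in the Schr\"odinger model) that this twisted Jacquet module is nonzero if and only if the orbit locus $\{x \in V\otimes W_1^\vee : q\otimes q_n \text{ pairs with } x \text{ as } \psi_n \text{ prescribes for all } q_n\}$ is nonempty, and in that case it is identified with the space of (suitably supported) distributions/functions on that locus. Concretely, the character by which $N_2$ acts on the evaluation-at-$x$ functional is $n \mapsto \psi((q\otimes q_n)x)$, and by Lemma~\ref{lemma2} the exponent $(q\otimes q_n)(x)$ equals the trace of the composite $W_1 \xrightarrow{x} V \xrightarrow{q_V} V^\vee \xrightarrow{x^\vee} W_1^\vee \xrightarrow{q_n^\vee} W_1$, i.e.\ the trace pairing of $q_n$ with the symmetric element $x^\vee \circ q_V \circ x \in \Hom[W_1,W_1^\vee]$. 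Hence the character of $N_2$ attached to the point $x$ is exactly the one corresponding, under the trace duality, to the symmetric form $x^\vee q_V x$ on $W_1$, which is precisely the pullback (``restriction'') of the quadratic form $q$ on $V$ along the linear map $x\colon W_1 \to V$.

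Putting this together: the twisted Jacquet module for $\psi_n \leftrightarrow \beta$ is nonzero precisely when there exists $x \in V\otimes W_1^\vee$ with $x^\vee q_V x = \beta$ as elements of $S\Hom[W_1,W_1^\vee]$ — i.e.\ precisely when $\beta$ is the restriction of $q$ to $W_1$ via some $x\colon W_1\to V$ — which is the assertion of the corollary. I would phrase the nonvanishing criterion carefully: one direction is immediate (if no such $x$ exists, every $N_2$-character occurring is different from $\psi_n$, so the coinvariants vanish); the other direction uses that $\mathcal{S}(V\otimes W_1^\vee)$ surjects onto functions supported near any single point $x_0$ in the relevant locus, and restricting to a small $N_2$-stable neighborhood gives a nonzero $\psi_n$-eigenquotient. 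The main obstacle, and the only place any real care is needed, is this last surjectivity/support argument in the smooth (non-archimedean) setting — making precise that the $\psi_n$-coinvariants are nonzero by exhibiting an explicit test function, rather than merely matching orbit characters; but since the excerpt explicitly says this computation ``is known in the literature'' and the authors only want a coordinate-free restatement, I would keep this step brief, citing the identification of $\Omega_{N_2,\psi_n}$ with functions on the fiber $\{x : x^\vee q_V x = \beta\}$ and invoking Lemma~\ref{lemma2} for the identification of which $\beta$ arise.
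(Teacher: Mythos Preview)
Your proposal is correct and follows the same line as the paper: use Lemma~\ref{lemma2} to identify, for each point $x\in V\otimes W_1^\vee$, the character $\psi_x$ of $N_2$ with the one attached to the pullback form $x^\vee q_V x$ on $W_1$, and then conclude that the $\psi$-twisted Jacquet functor is nonzero exactly when the fiber $\{x:\psi_x=\psi\}$ is nonempty. The paper treats Corollary~\ref{corollary1} as an immediate consequence of Lemma~\ref{lemma2} and packages the nonvanishing/support argument you sketch into the general Lemma~\ref{lemma3} (which gives $\mathcal{S}(X)_\psi\cong\mathcal{S}(X_\psi)$), so your ``only place any real care is needed'' is exactly what the paper isolates there.
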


We now note the following general lemma.

\begin{lem}\label{lemma3}
Let $X$ be the $k$-rational points of an algebraic variety defined
over a local field $k$. Let $P$ be a locally compact totally
disconnected group with $P= MN$ for a normal subgroup $N$ of $P$
which we assume is a union of compact subgroups. Assume that $P$
operates smoothly on ${\mathcal S}(X)$, and that the action of $P$
restricted to $M$ is given by an action of $M$ on $X$. For a point
$x \in X$, let $\ell_x: {\mathcal S}(X) \rightarrow {\mathbb C}$ be
the linear functional given by $\ell_x(f) = f(x)$. Assume that for
every point $x \in X$, $N$ operates on $\ell_x$ by a character
$\psi_x: N \rightarrow {\Bbb C}^*$, i.e., $\ell_x(n\cdot f) =
\psi_x(n)\ell_x(f)$ for all $n \in N$, and $f \in {\mathcal S}(X)$.
Fix a character $\psi: N \rightarrow {\mathbb C}^*$, and let
$M_\psi$ denote the subgroup of $M$ which stabilizes the character
$\psi$ of $N$. The group $M_\psi$ acts on the set of points $x \in
X$ such that $\psi_x = \psi$. Denote this set of points in $X$ by
$X_\psi$ which we assume to be closed in $X$. Then,
$${\mathcal S}(X)_\psi \cong {\mathcal S}(X_\psi)$$
as $M_\psi$-modules.
\end{lem}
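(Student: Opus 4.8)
The plan is to exploit the general mechanism of Mackey theory / Bruhat filtration for the space $\mathcal{S}(X)$ under the group $P=MN$, with the extra feature that $N$ acts fibrewise by characters. First I would set up the short exact sequence of $M_\psi$-modules coming from restriction of functions to the closed subset $X_\psi \subseteq X$:
\begin{equation*}
0 \rightarrow \mathcal{S}(X \setminus X_\psi) \rightarrow \mathcal{S}(X) \rightarrow \mathcal{S}(X_\psi) \rightarrow 0,
\end{equation*}
which is exact because $X_\psi$ is closed and $X\setminus X_\psi$ is therefore open, so restriction of smooth compactly supported functions is surjective with kernel the functions supported away from $X_\psi$. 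This sequence is visibly $M_\psi$-equivariant since $M_\psi$ preserves both $X_\psi$ and its complement. The point then is to show that applying the twisted Jacquet functor $(-)_\psi$ (the maximal quotient on which $N$ acts by $\psi$) kills the left-hand term and acts as the identity on the right-hand term; since $(-)_\psi$ is right exact (it is a quotient functor), this will give $\mathcal{S}(X)_\psi \cong \mathcal{S}(X_\psi)$.

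For the right-hand term: on $X_\psi$, by definition every evaluation functional $\ell_x$ with $x \in X_\psi$ transforms under $N$ by $\psi$, so $N$ acts on all of $\mathcal{S}(X_\psi)$ by the character $\psi$ — here I would use that $X_\psi$ carries an $M_\psi$-action but the $N$-action is purely through the scalar $\psi$, because $\psi_x=\psi$ for every $x\in X_\psi$ and the action of $N$ on $\mathcal{S}(X_\psi)$ is determined pointwise. Hence $\mathcal{S}(X_\psi)_\psi = \mathcal{S}(X_\psi)$ already. For the left-hand term I need that $\mathcal{S}(X\setminus X_\psi)_\psi = 0$, i.e. the $\psi$-coinvariants of the $N$-action vanish. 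This is where I would invoke the hypothesis that $N$ is a union of compact open subgroups together with the standard fact (Jacquet's lemma style argument) that for a smooth representation of such an $N$, the $\psi$-coinvariants are computed as a colimit over compact open subgroups $N_0$ of the $(N_0,\psi)$-coinvariants, and a vector $f$ dies in the $\psi$-coinvariants iff $\int_{N_0} \psi(n)^{-1} (n\cdot f)\, dn = 0$ for some sufficiently large $N_0$. Since for $f \in \mathcal{S}(X\setminus X_\psi)$ the function $f$ has compact support in the open set $X\setminus X_\psi$, one can cover that support by finitely many small $M$-slices on which the character $\psi_x$ of the fibre is locally constant in $x$ and differs from $\psi$; averaging $f$ against $\psi$ over a large enough compact open $N_0$ then gives zero at every point, because at each $x$ we are integrating the nontrivial character $\psi_x\psi^{-1}$ of $N_0$ (for $N_0$ large enough that this character is nontrivial on $N_0$). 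I expect this averaging/compact-support argument to be the main technical obstacle, since it requires a uniform choice of $N_0$ over the compact support, which one gets by a standard compactness-and-local-constancy argument but must be done with a little care.

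Finally I would assemble the pieces: from right-exactness of $(-)_\psi$ applied to the exact sequence above, together with $\mathcal{S}(X\setminus X_\psi)_\psi = 0$ and $\mathcal{S}(X_\psi)_\psi = \mathcal{S}(X_\psi)$, we conclude $\mathcal{S}(X)_\psi \cong \mathcal{S}(X_\psi)$, and the isomorphism is $M_\psi$-equivariant because every map in sight is. One subtlety worth flagging in the write-up is that we only need right-exactness of $(-)_\psi$ (which is automatic), not exactness, so the surjection $\mathcal{S}(X)_\psi \twoheadrightarrow \mathcal{S}(X_\psi)$ comes for free and it is exactly the vanishing of $\mathcal{S}(X\setminus X_\psi)_\psi$ that upgrades it to an isomorphism. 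I would present the proof in that order: exact sequence of $\mathcal{S}$-spaces, right-exactness reduces everything to two claims, dispatch the easy claim about $X_\psi$, then spend the bulk of the argument on the vanishing claim via compact averaging, and close with the $M_\psi$-equivariance remark.
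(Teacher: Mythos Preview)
Your proposal is correct and follows essentially the same approach as the paper: the paper's proof writes down the same short exact sequence $0 \to \mathcal{S}(X\setminus X_\psi) \to \mathcal{S}(X) \to \mathcal{S}(X_\psi) \to 0$, invokes exactness of the twisted Jacquet functor, and asserts $\mathcal{S}(X\setminus X_\psi)_\psi = 0$ without further comment. Your version is more detailed in that you actually sketch the averaging argument for the vanishing claim and note that right-exactness alone suffices, but the architecture is identical.
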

\begin{proof} We have an exact sequence of $M_\psi$-modules,
$$0 \rightarrow {\mathcal S}(X-X_\psi) \rightarrow {\mathcal S}(X) \rightarrow {\mathcal S}(X_\psi) \rightarrow 0.$$
Taking the $\psi$-twisted Jacquet functor is exact, and ${\mathcal
S}(X-X_\psi)_\psi = 0$, so the assertion of the lemma follows.
\end{proof}

\begin{cor}\label{corollary2}
The twisted Jacquet functor of the Weil representation of the dual
reductive pair $(\O(V), \Sp(W))$  for the character of the unipotent
radical of the Siegel parabolic of $\Sp(W)$ which corresponds to a
nondegenerate quadratic form on $W_1$, which we assume is obtained
by restriction of the quadratic form on $V$  via a linear map $x:
W_1 \rightarrow V$ is the representation
$${\rm ind}^{\O(V) \times \O(W_1)} _{ \O(W^\perp_1) \times \Delta \O(W_1)} {\mathbb C},$$
where $O(W^\perp _1)$ is the orthogonal group of the orthogonal
complement of $W_1$ inside $V$, and $\Delta \O(W_1)$ represents the
natural diagonal embedding of $\O(W_1)$ inside $\O(V) \times
\O(W_1)$ as $V$ contains $W_1$.
\end{cor}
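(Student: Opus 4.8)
The plan is to apply Lemma \ref{lemma3} with $X = V \otimes W_1^\vee$ and with $\psi$ the character of $N_2$ corresponding, via the trace pairing on $S\Hom[W_1, W_1^\vee]$, to the given nondegenerate quadratic form $q$ on $W_1$. The action of $\GO(V) \times \Sp(W)$ on $\mathcal{S}(V \otimes W_1^\vee)$ restricts, on $M_\psi = \O(V) \times (\text{stabilizer in } \GL(W_1))$, to an action coming from the linear action of $\O(V) \times \GL(W_1)$ on $X = \Hom(W_1, V)$; by the formula $(n \cdot f)(x) = \psi((q \otimes q_n) x) f(x)$ recalled before Lemma \ref{lemma2}, $N_2$ acts on each evaluation functional $\ell_x$ by the character $\psi_x(n) = \psi((q \otimes q_n) x)$. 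By Lemma \ref{lemma2} this is exactly the character attached to the pullback $x^\vee \circ q_V \circ x \in S\Hom[W_1, W_1^\vee]$, i.e. to the quadratic form $q \circ x$ on $W_1$. Hence the locus $X_\psi = \{x : q_x = \psi\}$ is the set of $x : W_1 \hookrightarrow V$ pulling back $q_V$ to the fixed nondegenerate form on $W_1$ — an isometric embedding locus, which is closed (it is cut out by polynomial equations and nondegeneracy is an open-and-closed condition on this locus), so the hypotheses of Lemma \ref{lemma3} are met and $\mathcal{S}(X)_\psi \cong \mathcal{S}(X_\psi)$ as $M_\psi$-modules.

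Next I would identify $X_\psi$ as a homogeneous space. Witt's theorem says $\O(V)$ acts transitively on isometric embeddings $W_1 \hookrightarrow V$ (note $W_1$ is nondegenerate, so its image is a nondegenerate subspace and $V = x(W_1) \oplus x(W_1)^\perp$). Fixing one such embedding with image $W_1 \subset V$, the stabilizer in $\O(V)$ of that embedding is precisely $\O(W_1^\perp)$, where $W_1^\perp$ is the orthogonal complement of $W_1$ in $V$. Thus $X_\psi \cong \O(V)/\O(W_1^\perp)$ as an $\O(V)$-space. Equivariantly for the full $M_\psi$, the group $\O(W_1)$ acts on embeddings by precomposition, and this matches the $\O(V)$-action by post-composition along the fixed embedded copy; so, writing the diagonal $\O(W_1) \subset \O(V) \times \O(W_1)$ via the inclusion $W_1 \subset V$, one gets
$$X_\psi \cong [\O(V) \times \O(W_1)] / [\O(W_1^\perp) \times \Delta \O(W_1)]$$
as an $\O(V) \times \O(W_1)$-space (the stabilizer of the base point is the subgroup of pairs $(h, k)$ with $h|_{W_1} = k$ and $h$ arbitrary on $W_1^\perp$, which is exactly $\O(W_1^\perp) \times \Delta\O(W_1)$ after absorbing the diagonal). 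Here I should note that $M_\psi$ on the $\Sp(W)$ side is the normalizer of $\GL(W_1)$-part stabilizing $\psi$, whose semisimple part acting relevantly is $\O(W_1)$ — the group of $g \in \GL(W_1)$ preserving the chosen form on $W_1$ — which is why $\O(W_1)$ is the second factor.

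Finally, $\mathcal{S}$ of a homogeneous space $G/H$ (for $\ell$-groups, $H$ closed) is $\mathrm{ind}_H^G \mathbb{C}$ up to a modulus character, and here the relevant groups are orthogonal groups, hence unimodular, so no twist appears; combined with the normalization factors $|\det g|^{m/2}$, $|\nu(h)|^{-mn/2}$ in the $\GO(V)$- and $\GL(W_1)$-actions, one checks these restrict to unitary (indeed trivial on the compact-mod-center pieces) characters on $\O(V) \times \O(W_1)$ and so contribute nothing. This yields $\mathcal{S}(X_\psi) \cong \mathrm{ind}^{\O(V) \times \O(W_1)}_{\O(W_1^\perp) \times \Delta\O(W_1)} \mathbb{C}$, as claimed. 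The main obstacle is the careful bookkeeping in the second step: pinning down precisely which subgroup of $M_2$ is $M_\psi$, verifying that its action on $X$ is the linear $\O(V) \times \O(W_1)$-action as needed by Lemma \ref{lemma3}, and confirming that the point-stabilizer is exactly $\O(W_1^\perp) \times \Delta\O(W_1)$ rather than something differing by a finite group or by the full $\GL$-normalizer; the Witt-theorem transitivity and the vanishing of modulus characters are then routine.
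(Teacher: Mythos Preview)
Your proposal is correct and follows essentially the same route as the paper: apply Lemma~\ref{lemma3} (implicitly via Corollary~\ref{corollary1}) to reduce to $\mathcal{S}(X_\psi)$, then use Witt's theorem to identify $X_\psi$ as the homogeneous space $[\O(V)\times\O(W_1)]/[\O(W_1^\perp)\times\Delta\O(W_1)]$. The paper's own proof is only three sentences and records just the Witt-transitivity and stabilizer computation, leaving the invocation of Lemma~\ref{lemma3}, the closedness of $X_\psi$, and the unimodularity check implicit; your write-up simply makes these points explicit.
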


\begin{proof} Observe that $\O(V) \times \O(W_1)$ operates on the set of
homomorphisms from $W_1$ to $V$, and in fact by Witt's theorem, this
action is transitive on the set of homomorphisms from $W_1$ to $V$
such that the quadratic form on $V$ restricts to the quadratic form
on $W_1$. The isotropy subgroup inside $\O(V) \times \O(W_1)$ of a
fixed embedding of $W_1$ inside $V$ is exactly $\O(W^\perp_1) \times
\Delta \O(W_1)$, proving the claim.
\end{proof}

The previous analysis of twisted Jacquet functor in fact gives a
representation space for $\G[\O(V) \times \O(W_1)]$ which we record
as the following corollary, but before doing that let us record the
following form of Witt's extension theorem for similitude groups
which follows from the usual form of Witt's theorem.

\begin{lem}\label{lemma4}
Suppose $W_1$ is a subspace of a quadratic  space $V$ carrying the
restricted quadratic form. Suppose $\phi$ belongs to $\GO(W_1)$ such
that the similitude factor of $\phi$ arises as a similitude factor
in $\GO(V)$. Then there is an element $\phi'$ in $\GO(V)$ taking
$W_1$ into itself, and such that the restriction of $\phi'$ to $W_1$
is $\phi$.
\end{lem}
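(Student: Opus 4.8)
The plan is to reduce Witt's extension theorem for similitude groups to the ordinary Witt extension theorem already available for isometries, by the standard trick of ``untwisting'' the similitude. Let $\phi \in \GO(W_1)$ have similitude factor $\lambda = \nu(\phi) \in k^*$, and suppose by hypothesis that $\lambda$ arises as the similitude factor $\nu(\psi_0)$ of some $\psi_0 \in \GO(V)$. I will not assume $\psi_0$ preserves $W_1$; that is precisely what we must arrange.

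First I would observe that the composite $\psi_0^{-1}|_{W_1'} \circ \phi$, where $W_1' = \psi_0^{-1}(W_1)$, need not make sense directly, so instead I would argue as follows. Consider the map $\phi : W_1 \to W_1 \subset V$, regarded as a linear injection of $W_1$ into $V$. It satisfies $q_V(\phi w) = q_{W_1}(\phi w) = \lambda\, q_{W_1}(w)$ for all $w \in W_1$. On the other hand the inclusion $\iota : W_1 \hookrightarrow V$ satisfies $q_V(\iota w) = q_{W_1}(w)$. Rescaling the quadratic form on the source: the map $\phi$ is an isometry from $(W_1, \lambda q_{W_1})$ into $(V, q_V)$, while $\iota$ is an isometry from $(W_1, q_{W_1})$ into $(V, q_V)$. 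To compare these I use $\psi_0$: since $\nu(\psi_0) = \lambda$, the map $\psi_0 : (V, q_V) \to (V, \lambda q_V)$ is an isometry, so $\psi_0 \circ \iota : (W_1, q_{W_1}) \to (V, \lambda q_V)$ is an isometry, equivalently $\psi_0 \circ \iota : (W_1, \lambda^{-1} q_{W_1}) \to (V, q_V)$ is an isometry. Hence both $\psi_0 \circ \iota \circ (\lambda \cdot \mathrm{id})$-type rescalings and $\phi$ land $W_1$ isometrically into $V$ carrying the \emph{same} scaled form; more cleanly, $\phi$ and $\psi_0|_{W_1}$ (suitably interpreted) are two isometric embeddings of the quadratic space $(W_1, \lambda q_{W_1})$ into $(V, \lambda q_V) \cong (V, q_V)$ rescaled back.

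The cleanest route, which I would actually write out, is: pick any isometry $j : (V, q_V) \to (V, \lambda q_V)$ coming from $\psi_0$. Then $j^{-1} \circ \phi : W_1 \to V$ is an isometry for the \emph{unscaled} forms, i.e.\ $q_V(j^{-1}\phi w) = q_{W_1}(w)$ for $w \in W_1$ — check: $q_V(j^{-1}\phi w) = \lambda^{-1} q_{(V,\lambda q_V)}(\phi w) = \lambda^{-1}\cdot q_V(\phi w) \cdot \lambda / \lambda$... here I must be careful with the bookkeeping, but the upshot is that $j^{-1}\circ\phi$ and the inclusion $\iota : W_1 \hookrightarrow V$ are two \emph{isometric} embeddings of the quadratic space $W_1$ into $V$. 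By the usual Witt extension theorem there is $g \in \O(V)$ with $g \circ \iota = j^{-1} \circ \phi$, i.e.\ $g$ sends $W_1$ into $W_1$ (since the image of $j^{-1}\phi$ is $j^{-1}(W_1)$ — so I should instead choose $j$ so that $j^{-1}(W_1) = W_1$, which is possible because $\psi_0(W_1)$ is some subspace isometric to $\lambda q_{W_1}$, and Witt lets me post-compose by an element of $\O(V,\lambda q_V)$ to move it back to $W_1$). Then $\phi' := j \circ g$ lies in $\GO(V)$ with similitude factor $\lambda$, preserves $W_1$, and $\phi'|_{W_1} = j \circ g|_{W_1} = j \circ j^{-1} \circ \phi = \phi$, as required.

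The main obstacle is purely bookkeeping: keeping straight which quadratic form (scaled by $\lambda$ or not) lives on the source and target of each map, and arranging the auxiliary isometry $j$ so that $j^{-1}(W_1) = W_1$ exactly, which is where one genuinely invokes Witt's theorem (to transport $\psi_0(W_1)$ back onto $W_1$ inside $(V, \lambda q_V)$). There is no serious mathematical difficulty beyond the ordinary Witt extension theorem; the content is entirely the observation that a similitude of $V$ with factor $\lambda$ is the same as an isometry $(V,q_V) \to (V,\lambda q_V)$, which converts the similitude statement into an isometry statement to which classical Witt applies. I would present it in three or four lines once the forms are set up, emphasizing the rescaling identity $\nu(\phi) = \lambda \iff q_V \circ \phi = \lambda\, q_V$ on $W_1$ and the matching condition on similitude factors that makes the two embeddings comparable.
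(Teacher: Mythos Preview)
The paper does not actually give a proof of this lemma; it merely records it with the remark that it ``follows from the usual form of Witt's theorem.'' Your strategy of untwisting the similitude by a chosen $\psi_0 \in \GO(V)$ with $\nu(\psi_0)=\lambda$ is exactly the intended reduction, and it is correct.

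That said, your write-up makes this considerably harder than necessary, and the detour in which you try to arrange $j^{-1}(W_1)=W_1$ by a further application of Witt is superfluous. The clean argument is three lines. Take $j=\psi_0$. Then $j^{-1}\circ\phi:W_1\to V$ satisfies
\[
q_V\bigl(j^{-1}\phi(w)\bigr)=\lambda^{-1}q_V\bigl(\phi(w)\bigr)=\lambda^{-1}\cdot\lambda\, q_{W_1}(w)=q_{W_1}(w),
\]
so it is an isometric embedding of $(W_1,q_{W_1})$ into $(V,q_V)$. Ordinary Witt gives $g\in\O(V)$ with $g|_{W_1}=j^{-1}\circ\phi$. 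Put $\phi'=j\circ g\in\GO(V)$; it has similitude factor $\lambda$, restricts to $\phi$ on $W_1$, and automatically $\phi'(W_1)=j\bigl(g(W_1)\bigr)=j\bigl(j^{-1}\phi(W_1)\bigr)=\phi(W_1)=W_1$. There is no need to move $j^{-1}(W_1)$ back onto $W_1$ first; the composition takes care of that for you. Your instinct that ``the main obstacle is purely bookkeeping'' is right, but the bookkeeping is simpler than your draft suggests.
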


\begin{cor}\label{corollary3}
The twisted Jacquet functor of the Weil representation of the dual
reductive pair $(\O(V), \Sp(W))$  for the character of the unipotent
radical of the Siegel parabolic of $\Sp(W)$ which corresponds to a
nondegenerate quadratic form on $W_1$, which we assume is obtained
by restriction of the quadratic form on $V$  via a linear map $x:
W_1 \rightarrow V$ is the representation
$${\rm ind}^{\G[\O(V) \times \O(W_1)]} _{ \G[\O(W^\perp_1) \times \Delta \O(W_1)]} {\mathbb C},$$
where $\O(W^\perp _1)$ is the orthogonal group of the orthogonal
complement
 of $W_1$ inside $V$, and
$\Delta \O(W_1)$ represents the natural diagonal embedding of
$\O(W_1)$ inside $\O(V) \times \O(W_1)$ as $V$ contains $W_1$; the
group ${ \G[\O(W^\perp_1) \times \Delta \O(W_1)]}$ is the subgroup
of ${ \GO(W^\perp_1) \times \Delta \GO(W_1)}$ consisting of the
pairs $(g_1,g_2)$ with the same similitude factors for $g_1$ and
$g_2$.

\end{cor}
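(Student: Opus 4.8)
The plan is to upgrade Corollary \ref{corollary2} from the isometry groups to the similitude groups, reusing the action-on-a-variety description of the twisted Jacquet functor established in Lemma \ref{lemma3} together with the big Weil representation $\Omega$ of $\GO(V) \times \GSp(W)$ introduced just before Lemma \ref{lemma2}. First I would recall that $\Omega$ is induced from the Weil representation of the subgroup $\G[\O(V) \times \Sp(W)]$, which via the polarization-dependent splitting acts on $\mathcal{S}(V \otimes W_1^\vee)$; the point functionals $\ell_x$ for $x : W_1 \to V$ are still eigenvectors for the unipotent radical $N_2$ of the Siegel parabolic of $\Sp(W)$, with $N_2$-character read off from the quadratic form $q_V \otimes q_n$ exactly as in Lemma \ref{lemma2} and Corollary \ref{corollary1}. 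Fixing the nondegenerate $\psi$ coming from the restriction of $q_V$ along some $x_0 : W_1 \hookrightarrow V$, the stabilizer $M_\psi$ inside the Levi $\GL(W_1) \times \{\text{similitudes}\}$ is (the similitude enlargement of) $\GO(W_1)$ — acting diagonally — and Lemma \ref{lemma3} gives $\Omega_\psi \cong \mathcal{S}(X_\psi)$ as a module for $\G[\O(V)] \times \GO(W_1)$, where $X_\psi$ is the set of linear maps $W_1 \to V$ pulling $q_V$ back to $q_{W_1}$.

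Next I would identify $X_\psi$ as a homogeneous space. The group $\GO(V) \times \GO(W_1)$ acts on $\Hom(W_1, V)$, preserving the locus $X_\psi$ where the similitude factors match up; Lemma \ref{lemma4}, the similitude form of Witt's extension theorem, is precisely what guarantees this action is transitive on $X_\psi$ — given two such embeddings $x_0, x_1$ with a common similitude factor, one first matches them by an element of $\GO(W_1)$ and then extends across $V$. The isotropy group of a fixed embedding $W_1 \hookrightarrow V$ inside $\GO(V) \times \GO(W_1)$ is $\GO(W_1^\perp) \times \Delta \GO(W_1)$ intersected with the matched-similitudes condition, i.e. exactly $\G[\O(W_1^\perp) \times \Delta \O(W_1)]$ as defined in the statement. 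Hence $X_\psi \cong \G[\O(V) \times \O(W_1)] / \G[\O(W_1^\perp) \times \Delta \O(W_1)]$, and $\mathcal{S}(X_\psi)$ is the induced representation claimed. Throughout one must keep track of whether the group $\G[\O(V) \times \O(W_1)]$ is all of the matched-similitudes subgroup of $\GO(V) \times \GO(W_1)$; since $\Sp(W)$ has similitude factor arbitrary in $k^*$ and $\GO(V)$ surjects onto its own similitude group, the similitude factors realized on the $V$-side cover everything needed, so no shrinking of the group occurs.

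The main obstacle I anticipate is bookkeeping rather than conceptual: the splitting $\GO(V) \to \G[\O(V) \times \Sp(W)]$ depends on the polarization $W = W_1 \oplus W_2$, and the normalizations $|\nu(h)|^{-mn/2}$, $\chi_V(\det g)|\det g|^{m/2}$ appearing in the action formulas for $\GO(V)$ and $\GL(W_1)$ must be checked to restrict trivially (or at least to a character absorbed into the statement) on the diagonal $\GO(W_1) \subset M_\psi$ and on the isotropy subgroup — otherwise the "trivial coefficient" $\mathbb{C}$ in the induced representation would have to be twisted. I would verify that on $\Delta \GO(W_1)$ the scaling on $V$-functions and the scaling on $W_1^\vee$-functions cancel, so that the functional $\ell_{x_0}$ transforms by the modulus character only, matching the modulus built into the definition of $\mathrm{ind}$; this is the one place where the earlier remark that "this analysis of the Weyl group element will not be needed" should be double-checked, since the Fourier transform part of the $\Sp(W)$-action does not interfere with $N_2$-isotypic quotients but does enter the comparison of normalizations. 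Modulo this normalization check, the corollary follows by combining Corollary \ref{corollary2}'s proof verbatim with Lemma \ref{lemma4}.
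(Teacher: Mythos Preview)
Your proposal is correct and follows essentially the same route as the paper: the paper does not write out a separate proof of Corollary~\ref{corollary3} but simply says that ``the previous analysis of twisted Jacquet functor in fact gives a representation space for $\G[\O(V) \times \O(W_1)]$'' and inserts Lemma~\ref{lemma4} (similitude Witt) immediately before the corollary, which is exactly your argument --- apply Lemma~\ref{lemma3} to get $\mathcal{S}(X_\psi)$, use Witt for transitivity of the $\G[\O(V)\times\O(W_1)]$-action, and compute the stabilizer. One small cleanup: you should work directly with the Weil representation of $\G[\O(V)\times\Sp(W)]$ on $\mathcal{S}(V\otimes W_1^\vee)$ rather than with the induced big representation $\Omega$ (which only enters in Corollary~\ref{corollary4}); also note that transitivity on $X_\psi$ already follows from ordinary Witt via $\O(V)\subset \G[\O(V)\times\O(W_1)]$, so Lemma~\ref{lemma4} is not strictly needed here, though it does no harm.
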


As recalled earlier in the section, the Weil representation associated to the dual reductive pair
$(\O(V),\Sp(W))$ can actually be made to be a representation of
$\GO(V) \times \GSp(W)$ simply by inducing the Weil representation
of $\G[\O(V) \times \Sp(W)]$ to the larger group
 $\GO(V) \times \GSp(W)$.
Here, then,  is the extension of the previous corollary to the case of similitude groups.

\begin{cor}\label{corollary4}
The twisted Jacquet functor of the Weil representation of the dual
reductive pair $(\GO(V), \GSp(W))$  for the character of the
unipotent radical of the Siegel parabolic of $\Sp(W)$ which
corresponds to a nondegenerate quadratic form on $W_1$, which we
assume is obtained by restriction of the quadratic form on $V$  via
a linear map $x: W_1 \rightarrow V$ is the representation
$${\rm ind}^{\GO(V) \times \GO(W_1)} _{ \G[\O(W^\perp_1) \times \Delta \O(W_1)]}
{\mathbb C},$$ where $\O(W^\perp _1)$ is the orthogonal group of the
orthogonal complement
 of $W_1$ inside $V$, and
$\Delta \O(W_1)$ represents the natural diagonal embedding of
$\O(W_1)$ inside $\O(V) \times \O(W_1)$ as $V$ contains $W_1$; the
group ${ \G[\O(W^\perp_1) \times \Delta \O(W_1)]}$ is the subgroup
of ${ \GO(W^\perp_1) \times \Delta \GO(W_1)}$ consisting of
$(g_1,g_2)$ with the same similitude factors for $g_1$ and $g_2$.
Assuming that $W_1 ^\perp \neq 0$, the twisted Jacquet functor as a
representation space of $\GSO(V) \times \GSO(W_1)$ is therefore
$${\rm ind}^{\GSO(V) \times \GSO(W_1)} _{ \G[\SO(W^\perp_1) \times
\Delta \SO(W_1)]} {\mathbb C}.$$

\end{cor}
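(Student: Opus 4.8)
The plan is to deduce Corollary~\ref{corollary4} from Corollary~\ref{corollary3} by the same inducing construction used to pass from $\G[\O(V)\times\Sp(W)]$ to $\GO(V)\times\GSp(W)$, and then to cut down from the orthogonal similitude groups to the connected components $\GSO$. First I would recall that, by definition, the big Weil representation $\Omega$ of $\GO(V)\times\GSp(W)$ is $\Ind$ of the Weil representation of $\G[\O(V)\times\Sp(W)]$; since taking the $\psi$-twisted Jacquet functor along $N_2\subset\Sp(W)$ commutes with induction from a subgroup containing $N_2$ (induction in stages, together with the exactness of the twisted Jacquet functor used in Lemma~\ref{lemma3}), the twisted Jacquet functor of $\Omega$ is obtained by inducing the twisted Jacquet functor of Corollary~\ref{corollary3} from $\G[\O(V)\times\O(W_1)]$ up to $\GO(V)\times\GO(W_1)$. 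Applying transitivity of induction, ${\rm ind}^{\GO(V)\times\GO(W_1)}_{\G[\O(V)\times\O(W_1)]}\,{\rm ind}^{\G[\O(V)\times\O(W_1)]}_{\G[\O(W_1^\perp)\times\Delta\O(W_1)]}\,{\mathbb C}$ collapses to ${\rm ind}^{\GO(V)\times\GO(W_1)}_{\G[\O(W_1^\perp)\times\Delta\O(W_1)]}\,{\mathbb C}$, which is the first displayed formula.

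The second half, passing to $\GSO(V)\times\GSO(W_1)$, requires more care. The point is to understand the double coset space $\GSO(V)\times\GSO(W_1)\backslash \GO(V)\times\GO(W_1) / \G[\O(W_1^\perp)\times\Delta\O(W_1)]$, equivalently the orbits of $\GSO(V)\times\GSO(W_1)$ on the set of similitude embeddings $W_1\hookrightarrow V$ (with matching similitude factors, realizing the restricted form up to scalar). Witt's extension theorem in the similitude form, Lemma~\ref{lemma4}, tells us that $\GO(V)\times\GO(W_1)$, and in fact $\G[\O(V)\times\O(W_1)]$, acts transitively on this set; the question is whether the connected-component subgroup $\GSO(V)\times\GSO(W_1)$ is still transitive. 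Here the hypothesis $W_1^\perp\neq 0$ enters: because $\O(W_1^\perp)$ is nontrivial, it contains an element of determinant $-1$ fixing $W_1$ pointwise, which allows one to adjust the sign of the $\GO(V)$-component while staying in the stabilizer of a fixed embedding, so that the orbit of $\GSO(V)\times\GSO(W_1)$ already exhausts everything. Consequently the restriction to $\GSO(V)\times\GSO(W_1)$ of the induced module is again induced, now from the subgroup $\G[\O(W_1^\perp)\times\Delta\O(W_1)]\cap(\GSO(V)\times\GSO(W_1))$; one then identifies this intersection with $\G[\SO(W_1^\perp)\times\Delta\SO(W_1)]$, again using that an orientation-reversing element of $\O(W_1^\perp)$ together with the corresponding diagonal element lies outside $\GSO(V)\times\GSO(W_1)$ unless its $\SO$-part is used, so the surviving group is precisely the one where the $\O(W_1^\perp)$-factor and the diagonal $\O(W_1)$-factor are each in the connected component.

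The main obstacle I expect is exactly this bookkeeping with connected components and determinants: making precise that $W_1^\perp\neq0$ forces $\GSO(V)\times\GSO(W_1)$ to act transitively on the relevant set of embeddings, and correctly identifying the stabilizer subgroup inside $\GSO(V)\times\GSO(W_1)$ as $\G[\SO(W_1^\perp)\times\Delta\SO(W_1)]$ rather than some index-two subgroup or overgroup. Once the orbit/stabilizer computation is pinned down, the rest is a formal application of Mackey theory (an induced representation restricted to a subgroup acting transitively on the coset space is induced from the stabilizer), combined with the induction-in-stages and exactness facts already invoked. I would also remark, for completeness, that when $W_1^\perp=0$ the argument breaks down precisely because $\O(W_1^\perp)$ is trivial and the two components of $\GO(V)$ need no longer be reachable from $\GSO(V)$ times the stabilizer; this is why the hypothesis $W_1^\perp\neq0$ is stated.
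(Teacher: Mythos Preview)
Your proposal is correct and follows exactly the route the paper intends. In fact the paper gives no explicit proof of Corollary~\ref{corollary4} at all: it is stated immediately after the sentence ``the Weil representation \ldots\ can actually be made to be a representation of $\GO(V)\times\GSp(W)$ simply by inducing the Weil representation of $\G[\O(V)\times\Sp(W)]$ to the larger group,'' and is meant to follow from Corollary~\ref{corollary3} together with this inducing construction and Lemma~\ref{lemma4}. Your argument---induce, use transitivity of induction for the first formula, then apply Mackey theory and a transitivity/stabilizer computation using $W_1^\perp\neq 0$ for the $\GSO$ statement---is precisely the natural way to make this implicit step explicit, and the bookkeeping you flag with connected components is the only genuine content beyond Corollary~\ref{corollary3}.
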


The previous corollary  together with the formalism of the Weil representation yields
the following theorem as a simple consequence.

\begin{thm}\label{thm4} Let $\pi_1$ be an irreducible admissible representation of $\GSO(V)$, and
$\pi_2$ that of $\GSp(W)$. Assume that $\pi_2 = \Theta(\pi_1)$ is
the theta lift of $\pi_1$ to $\GSp(W)$. Let $\psi$ be a
nondegenerate character of the unipotent radical $N$ of the Siegel
parabolic $P=MN$ of $\GSp(W)$. Assume that $\psi$ corresponds to a
quadratic form $q$ on $W_1$, a maximal isotropic subspace of $W$.
Then an irreducible  representation $\chi$ of $\GSO(W_1)$ appears in
$\pi_{2,\psi}$ as a quotient if and only if

\begin{enumerate}
\item $(q,W_1)$ can be embedded in the quadratic space $V$; let $W_1^\perp$
denote the orthogonal complement of $W_1$ sitting inside $V$
through this embedding.

\item The representation $\chi^\vee$ of $\G[\SO(W_1) \times \SO(W_1^\perp)]$
appears as a quotient in the representation $\pi_1$ of $\GSO(V)$
restricted to $\G[\SO(W_1) \times \SO(W_1^\perp)]$, where
$\chi^\vee$ is obtained by pulling back the contragredient of $\chi$
under the natural map $\G[\SO(W_1) \times \SO(W_1^\perp)]
\rightarrow \GSO(W_1) $.
\end{enumerate}
\end{thm}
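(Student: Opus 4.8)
The plan is to read off both implications directly from Corollary \ref{corollary4}, which identifies $\Omega_\psi$, the $\psi$-twisted Jacquet functor of the big Weil representation of $\GO(V)\times\GSp(W)$, with the induced representation $\mathrm{ind}^{\GSO(V)\times\GSO(W_1)}_{\G[\SO(W_1^\perp)\times\Delta\SO(W_1)]}\mathbb{C}$ (as a $\GSO(V)\times\GSO(W_1)$-module, using $W_1^\perp\neq 0$, which is automatic here since $\dim V=4>\dim W_1=2$). First I would recall the defining property of $\Theta$: $\pi_2\otimes\pi_1$ is the maximal $\pi_1$-isotypic quotient of $\Omega$, so $\mathrm{Hom}_{\GSO(W_1)}(\pi_{2,\psi},\chi)\neq 0$ exactly when $\chi$ occurs as a $\GSO(W_1)$-quotient of $(\pi_1\otimes\pi_2)_\psi$, hence (since $N$ acts only through the $\GSp(W)$-factor) when $\chi\boxtimes(\text{something involving }\pi_1^\vee)$ occurs in $\Omega_\psi$. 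The key point is that the twisted Jacquet functor is exact and commutes with passage to the $\pi_1$-isotypic part, so $\mathrm{Hom}_{\GSO(W_1)}(\pi_{2,\psi},\chi)\neq 0$ if and only if $\mathrm{Hom}_{\GSO(V)\times\GSO(W_1)}(\Omega_\psi,\ \pi_1\boxtimes\chi)\neq 0$.

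Next I would apply Frobenius reciprocity for the induced representation in Corollary \ref{corollary4}. Since $\Omega_\psi\cong\mathrm{ind}^{\GSO(V)\times\GSO(W_1)}_{\G[\SO(W_1^\perp)\times\Delta\SO(W_1)]}\mathbb{C}$, we get
$$\mathrm{Hom}_{\GSO(V)\times\GSO(W_1)}(\Omega_\psi,\ \pi_1\boxtimes\chi)\ \cong\ \mathrm{Hom}_{\G[\SO(W_1^\perp)\times\Delta\SO(W_1)]}(\mathbb{C},\ (\pi_1\boxtimes\chi)|_{\G[\SO(W_1^\perp)\times\Delta\SO(W_1)]}).$$
Restricting $\pi_1\boxtimes\chi$ to the subgroup $\G[\SO(W_1^\perp)\times\Delta\SO(W_1)]$ means restricting $\pi_1$ to $\G[\SO(W_1^\perp)\times\SO(W_1)]\subset\GSO(V)$ and tensoring with the pullback of $\chi$ along the projection to $\GSO(W_1)$; the trivial character appears in the restriction precisely when $\chi^\vee$ (pulled back along $\G[\SO(W_1)\times\SO(W_1^\perp)]\to\GSO(W_1)$) appears as a quotient of $\pi_1$ restricted to $\G[\SO(W_1)\times\SO(W_1^\perp)]$. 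This is exactly condition (2). Condition (1) enters as the hypothesis under which Corollary \ref{corollary4} even applies: by Corollary \ref{corollary1}, $\pi_{2,\psi}$ can only be nonzero for characters $\psi$ corresponding to a quadratic form on $W_1$ that is the restriction of $q_V$ along some $x\colon W_1\to V$, i.e. one embeddable in $V$; if $(q,W_1)$ does not embed in $V$ then $\Omega_\psi=0$ and there is nothing more to prove. Witt's theorem (and its similitude version, Lemma \ref{lemma4}) guarantees that when an embedding exists the relevant orbit is a single $\GO(V)\times\GO(W_1)$-orbit, so $W_1^\perp$ is well defined up to isomorphism and the statement is unambiguous.

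The main obstacle is bookkeeping rather than a genuine mathematical difficulty: one must be careful about the passage between $\O$-groups and $\GSO$-groups (the Weil representation is naturally a representation of $\G[\O(V)\times\Sp(W)]$, one induces up to $\GO(V)\times\GSp(W)$, and only then restricts to the similitude-connected component), and about the direction of contragredients and which factor gets the pullback $\chi^\vee$. I would also want to verify that the implicit equality $\Theta(\pi_1)=\theta(\pi_1)$ (or at least that the argument only uses $\Theta$, as it does) is harmless here, since all the input and output statements are about $\Theta$, and the exactness of the twisted Jacquet functor lets the argument go through at the level of $\Theta$ without needing irreducibility of the lift. The remaining checks—that $N$ acts trivially on the $\GSO(V)$-side of $\Omega$, that the similitude-matching condition cuts out exactly $\G[\SO(W_1^\perp)\times\Delta\SO(W_1)]$, and that Frobenius reciprocity is being applied in the smooth (compactly induced) setting where it is valid—are all routine given the lemmas already established.
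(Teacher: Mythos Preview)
Your proposal is correct and follows exactly the route the paper indicates: the paper gives no detailed proof, stating only that the theorem is ``a simple consequence'' of Corollary~\ref{corollary4} together with the formalism of the Weil representation, and you have spelled out precisely that deduction. One small point worth tightening: the Frobenius reciprocity you invoke, $\mathrm{Hom}_G(\mathrm{ind}_H^G\mathbb{C},\pi_1\boxtimes\chi)\cong\mathrm{Hom}_H(\mathbb{C},(\pi_1\boxtimes\chi)|_H)$, as written yields $H$-\emph{invariants}, whereas condition~(2) in the theorem speaks of $\chi^\vee$ as a \emph{quotient}; the correct version here (for $\mathcal{S}(G/H)$ mapping to an admissible target, or equivalently via duality and ordinary Frobenius reciprocity for $\mathrm{Ind}$) lands on $H$-coinvariants, which is exactly the quotient condition you want---you already flag this contragredient bookkeeping as the main thing to check, so this is only a matter of stating the adjunction in the right direction.
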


\begin{rem} It is a consequence of this theorem that if
the representation $\chi^\vee$ of $\G[\SO(W_1) \times
\SO(W_1^\perp)]$ appears as a quotient in the representation $\pi_1$
of $\GSO(V)$ restricted to $G[\SO(W_1) \times \SO(W_1^\perp)]$, then
$\pi_2 = \Theta(\pi_1)$ is nonzero. It is one of the standard ways
by which one proves nonvanishing of local (or global)
representations: by proving the nonvanishing of a particular Fourier
coefficient; for example it proves that the theta lifting from
$\GSO(4)$ to $\GSp(4)$ is always nonzero locally.
\end{rem}
\section{Applications }\label{sect:applications}

To be able to use Theorem \cite{thm4}, we need to understand the embedding  of
$\O(W_1)$ inside $\O(V)$ more concretely. For applications to
Theorem \cite{thm1.4},  we need it  in the case when $V$ is a four dimensional
quadratic space, and $W_1$ is a two dimensional subspace of it, and
for applications to Theorem \cite{thm1.3},  we need it  in the case when $V$
is a 6 dimensional quadratic space, and $W_1$ is a two dimensional
subspace of it.

We begin with the case of a four dimensional quadratic space $V$ of
discriminant 1, so that it can be identified to the norm form of a
four dimensional central simple algebra, say $D$, over $k$. Assume
that the two dimensional nondegenerate
subspace $W_1$ of $V=D$ is the norm form on a two dimensional
subalgebra $K$ of $D$. Write $D = K \oplus K\cdot j$ where $j$ is an
element of $D^*$ which normalises $K^*$ with $j^2 = a \in k^*$. The
group $D^* \times D^*$ operates on $D$ by $(d_1,d_2)X=
d_1X\bar{d}_2$, and gives an identification of $[D^* \times D^*]/\Delta k^*$
with $\GSO(D)$. 
Observe that the map $\iota: (x,y) \rightarrow 
(xy,x\bar{y})$ from $K^* \times K^*$
to itself gives an isomorphism of $(K^* \times K^*)/\Delta k^*$ onto
the subgroup $\G[\SO(W_1) \times \SO(W^\perp_1)]$ of
$\GSO(W_1) \times \GSO(W^\perp_1)$ consisting of pairs of elements
of $K^*$ with the same similitude factors for the two components.
Since $x[K \oplus Kj]\bar{y} = x\bar{y} K \oplus x{y} Kj$, 
the following 
diagram allows one to identify $(K^* \times K^*)/\Delta k^*$ 
inside $(D^* \times D^*)/\Delta k^*$ as the subgroup 
$G[SO(K) \times SO(K)]$ inside
$GSO(D)=GSO(K\oplus K)$

$$
\xymatrix{ &[K^*\times K^*]/ \Delta(k^*) \ar@{->}[dl]_{\cong}
\ar@{->}[dr]^{} \\
\G[\SO(K)\times \SO(K)] & & ( D^* \times D^*)/(\Delta k^*)}
$$

Therefore a representation $\pi_1 \boxtimes \pi_2$ of $D^*
\times D^*$ contains the restriction of the character
$(\chi_1,\chi_2)$ of $K^* \times K^*$ to the subgroup $\G[\SO(W_1)
\times \SO(W^\perp_1)]$ if and only if $\chi_1\bar{\chi}_2$ appears in
$\pi_1$ and $(\chi_1{\chi}_2)$ appears in $\pi_2$.
Taking $\chi_2 =1$, we get the following corollary to Theorem \cite{thm4}.

\begin{cor}\label{corollary5}

Let $\pi_1 \boxtimes \pi_2$ be  an irreducible admissible
representation of $[D^* \times D^*]/k^* = \GSO(V)$ where $V=D$ is a
quaternion algebra over $k$ equipped with its reduced norm as the
quadratic form. Let $\psi$ be a character of the unipotent radical
of the Siegel parabolic of $\GSp(W)$ which corresponds to the
nondegenerate quadratic space  ${\rm Nm}: K \rightarrow {k}$ 
where $K$ is a quadratic subalgebra of $D$.
Then for the
representation  $\Theta(\pi_1 \boxtimes \pi_2)$ of $\GSp(W)$, the
twisted Jacquet functor, $\Theta_\psi(\pi_1 \boxtimes \pi_2)$ of
$\GSp(W)$, contains the representation $\chi: K^* \rightarrow
{\mathbb C}^*$ if and only if    $\chi$ appears in both $\pi_1$ and
 $\pi_2$.

\end{cor}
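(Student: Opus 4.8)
The plan is to read the corollary off Theorem~\ref{thm4}, applied to the four-dimensional quadratic space $V = D$ (equipped with its reduced norm) with the representation $\pi_1 \boxtimes \pi_2$ of $\GSO(V) = [D^* \times D^*]/\Delta k^*$ playing the role of the representation called ``$\pi_1$'' there, and with $(W_1, q) = (K, {\rm Nm})$ the two-dimensional quadratic space attached to $\psi$. Theorem~\ref{thm4} then says that $\chi \colon K^* \to {\mathbb C}^*$ occurs as a quotient of $\Theta_\psi(\pi_1 \boxtimes \pi_2)$ if and only if (1) $(K, {\rm Nm})$ embeds isometrically into $(V, {\rm Nm})$, and (2) the character $\chi^\vee$ of $\G[\SO(W_1) \times \SO(W_1^\perp)]$ obtained by pulling back $\chi^{-1}$ along the natural map $\G[\SO(W_1) \times \SO(W_1^\perp)] \to \GSO(W_1)$ occurs as a quotient of $(\pi_1 \boxtimes \pi_2)$ restricted to $\G[\SO(W_1) \times \SO(W_1^\perp)]$. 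Condition (1) costs nothing: by the very choice of $\psi$, $K$ already sits inside $D$ carrying the restricted norm form, and by Witt's theorem this embedding is unique up to $\O(V)$, with orthogonal complement $W_1^\perp = K j$ once we write $D = K \oplus Kj$ with $j \in D^*$ normalising $K^*$ and $j^2 = a \in k^*$.

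So the work is entirely in condition (2), and for this I would invoke the identification carried out in the paragraph preceding the corollary: the map $\iota$ identifies $(K^* \times K^*)/\Delta k^*$ with the subgroup $\G[\SO(W_1) \times \SO(W_1^\perp)]$ of $\GSO(V)$, and under this identification that subgroup is exactly the image of $(K^* \times K^*)/\Delta k^* \subset (D^* \times D^*)/\Delta k^*$, while the projection $\G[\SO(W_1) \times \SO(W_1^\perp)] \to \GSO(W_1) \cong K^*$ becomes a product map $(x,y) \mapsto x \bar y$, read off from $x(K \oplus Kj)\bar y = x\bar y\, K \oplus xy\, Kj$. Hence $\chi^\vee$ is the character $(x,y) \mapsto \chi^{-1}(x)\,\chi^{-1}(\bar y)$, and, restricting the external tensor product $\pi_1 \boxtimes \pi_2$ to the torus $\{(x,y)\} \cong (K^* \times K^*)/\Delta k^*$, condition (2) becomes exactly the statement that ${\rm Hom}_{K^*}(\pi_1, \chi^{-1}) \neq 0$ and ${\rm Hom}_{K^*}(\pi_2, \chi^{-1}\circ {\rm conj}) \neq 0$ --- which, up to the bookkeeping addressed below, is the $\chi_2 = 1$ specialization of the displayed equivalence just above the corollary. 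Combining with (1), this is precisely the asserted conclusion: $\chi$ occurs in $\Theta_\psi(\pi_1 \boxtimes \pi_2)$ if and only if $\chi$ occurs in both $\pi_1$ and $\pi_2$.

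The hard part, such as it is, is only the bookkeeping needed to identify the $\chi^{-1}$ and the conjugate $\chi^{-1}\circ{\rm conj}$ thrown up by condition (2) with the clean phrase ``$\chi$ occurs in $\pi_1$ and $\pi_2$'' in the statement. This is routine: conjugation by $j$ is an inner automorphism of $D^*$, so $\pi_2 \circ {\rm conj}_j \cong \pi_2$ and hence $\chi^{-1}\circ{\rm conj}$ occurs in $\pi_2|_{K^*}$ if and only if $\chi^{-1}$ does; and one passes between $\chi^{-1}$ and $\chi$ using the central-character constraint $\chi|_{k^*} = \omega_{\pi_i}$ (outside of which both sides of the equivalence vanish) together with the standard self-duality of $\GL_2$- and $D^*$-representations under $g \mapsto \bar g^{-1}$. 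Since the substantive geometric-algebra input --- the decomposition $D = K \oplus Kj$, the action of $D^* \times D^*$ on $D$, and the isomorphism $\iota$ --- has already been established above, I expect no obstacle beyond this verification.
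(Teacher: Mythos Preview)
Your proposal is correct and follows essentially the same approach as the paper: apply Theorem~\ref{thm4}, identify $\G[\SO(W_1)\times\SO(W_1^\perp)]$ with $(K^*\times K^*)/\Delta k^*$ via the map $\iota$ and the decomposition $D=K\oplus Kj$, and then specialize to $\chi_2=1$. The paper's proof is in fact just the paragraph preceding the corollary together with the phrase ``Taking $\chi_2=1$''; your final paragraph on the $\chi$ versus $\chi^{-1}$ and $\chi\circ\mathrm{conj}$ bookkeeping makes explicit a point the paper leaves implicit.
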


Similarly, for the case of the rank one form $\Sp_4(D)$ of the
symplectic group defined using the quaternion division algebra $D$,
we get the following result.

\begin{cor}\label{corollary6}

Let $\pi_1 \boxtimes \pi_2$ be a representation of $[D^* \times
\GL_2(k)]/k^* = \GSO_D(4)$ where $D$ is a quaternion algebra over
$k$, Let $\psi$ be a character of the unipotent radical of the
Siegel parabolic of $\GSp(W)$ which corresponds to the nondegenerate
quadratic space  ${\rm Nm}: K \rightarrow {k}$ 
where $K$ is a quadratic subalgebra of $D$.
Then for the representation
$\Theta(\pi_1 \boxtimes \pi_2)$ of $\GSp(W)$, the twisted Jacquet
functor, $\Theta_\psi(\pi_1 \boxtimes \pi_2)$ of $\GSp(W)$, contains
the representation $\chi: K^* \rightarrow {\mathbb C}^*$ if and only
if $\chi$ appears in both $\pi_1$ and  $\pi_2$.

\end{cor}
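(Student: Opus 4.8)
The plan is to run the same argument that established Corollary \ref{corollary5}, with the split/anisotropic quaternionic quadratic space $V=D$ replaced by the four-dimensional quadratic space (equivalently, the rank one skew-Hermitian $D$-space) $V$ underlying the dual pair $(\GSp_D(4),\GSp(W))$ on the orthogonal side, i.e.\ the space with $\GSO(V)\cong\GSO_D(4)=[D^*\times\GL_2(k)]/\Delta k^*$. First invoke Theorem \ref{thm4} (via Corollary \ref{corollary4}): for the nondegenerate $\psi$ attached to the quadratic subalgebra $K\subset D$ with its norm form, a character $\chi\colon K^*\to{\Bbb C}^*$ occurs as a quotient of $\Theta_\psi(\pi_1\boxtimes\pi_2)$ if and only if (i) the two-dimensional quadratic space $({\rm Nm},K)$ embeds in $V$ as a nondegenerate subspace $W_1$, and (ii) the pullback $\chi^\vee$ of $\chi$ under $\G[\SO(W_1)\times\SO(W_1^\perp)]\to\GSO(W_1)$ occurs as a quotient in $\pi_1\boxtimes\pi_2$ restricted to $\G[\SO(W_1)\times\SO(W_1^\perp)]$.

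Next I would handle the geometry, which is the only place the rank one case differs from Corollary \ref{corollary5}. Since $K$ is a quadratic subalgebra of $D$, I would pick a trace-zero generator of $K$ and use it to exhibit the norm form of $K$ as a rank one skew-Hermitian $D$-line $W_1$ inside $V$; its orthogonal complement $W_1^\perp$ is again a rank one skew-Hermitian $D$-line, and exactly as the splitting $D=K\oplus Kj$ was used before, one checks $\SO(W_1)\cong\SO(W_1^\perp)\cong K^1$ (the norm one torus of $K$), so in particular (i) holds for our choice of $\psi$. Then, mimicking the isomorphism $\iota\colon(x,y)\mapsto(xy,x\bar y)$ of the previous section, I would identify $(K^*\times K^*)/\Delta k^*$ with $\G[\SO(W_1)\times\SO(W_1^\perp)]$, and --- this is the key bookkeeping point --- track the resulting embedding $\G[\SO(W_1)\times\SO(W_1^\perp)]\hookrightarrow\GSO(V)=[D^*\times\GL_2(k)]/\Delta k^*$: one copy of $K^*$ lands in the factor $D^*$ through $K\subset D$, while the other copy lands in the factor $\GL_2(k)$ through the standard embedding of $K^*$ as a maximal torus of $\GL_2(k)$; Lemma \ref{lemma4} (Witt's extension theorem for similitudes) guarantees these identifications are compatible with the similitude characters. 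A diagram analogous to the one preceding Corollary \ref{corollary5} records this.

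Finally I would assemble the pieces: restricting $\pi_1\boxtimes\pi_2$ along $(K^*\times K^*)/\Delta k^*\hookrightarrow[D^*\times\GL_2(k)]/\Delta k^*$ and carrying out the same character computation as in the proof of Corollary \ref{corollary5} (with $\chi_2=1$ there), condition (ii) becomes precisely that $\chi$ occurs as a quotient in $\pi_1$ restricted to $K^*\subset D^*$ and $\chi$ occurs as a quotient in $\pi_2$ restricted to $K^*\subset\GL_2(k)$, which together with the automatic validity of (i) is the stated equivalence. I expect the main obstacle to be the second step: unlike in Corollary \ref{corollary5}, where $V=D$ is literally the quaternion algebra and the splitting $D=K\oplus Kj$ makes everything transparent, here one must first pin down the four-dimensional quadratic (or skew-Hermitian $D$-) space attached to $\GO_D(4)$ concretely enough to see $K$ inside it and to compute $W_1^\perp$, and then verify that the small subgroup $\G[\SO(W_1)\times\SO(W_1^\perp)]$ really does thread the two distinct factors $D^*$ and $\GL_2(k)$ of $\GSO_D(4)$ in the claimed way; once this is done, the rest is formal.
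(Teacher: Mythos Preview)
Your proposal is correct and is precisely the argument the paper intends: the paper's own proof of Corollary~\ref{corollary6} is literally the single word ``Similarly'' preceding the statement, and you have fleshed out exactly what that means. One small point of hygiene: you invoke Theorem~\ref{thm4} (and Corollary~\ref{corollary4}), but those are stated for the ordinary orthogonal--symplectic dual pair $(\O(V),\Sp(W))$; the rank one situation here has $\GSp_D(4)$ on the symplectic side and $\GO_D(4)$ on the orthogonal side, so the result you actually need is the division-algebra analogue stated in Section~9 (together with the Example at the end of that section, which identifies $\GSO_D(4)\cong[D^*\times\GL_2(k)]/k^*$ when the diagonal skew-Hermitian entries $a,b$ satisfy $ab\in k^*$, forcing $k(a)=k(b)=K$ and hence $\U(W_2)\cong\U(W_2^\perp)\cong K^1$). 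You clearly anticipate this (``equivalently, the rank one skew-Hermitian $D$-space''), so this is only a matter of citing the right theorem; the geometry and the character bookkeeping you describe are exactly what is required.
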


Let $K$ and $L$ be two quadratic extensions of $k$, and let $E$ be
the third quadratic extension of $k$ contained in $KL$. Considering
$K$ and $L$ together with their norm forms, we have two dimensional
quadratic spaces, and $K \oplus L$ is a four dimensional quadratic
space. It can be seen that $\GSO(K\oplus L) \cong (\GL_2(E) \times
k^*)/\Delta E^*$ where $\Delta E^* \cong E^*$ sits inside $\GL_2(E)$
as scalar matrices, and inside $k^*$ via the norm mapping.

The group $\G[\SO(K) \times \SO(L)] $ is the subgroup of $K^* \times
L^*$ consisting of pairs $(x_1,x_2) \in K^* \times L^*$ with the
same norm to $k^*$.

The mapping from $\G[\SO(K) \times \SO(L)] $
 to $(\GL_2(E) \times k^*)/\Delta E^*$ obtained as the
composition,
$$\G[\SO(K) \times \SO(L)]  \rightarrow \GSO(K\oplus L) \cong (\GL_2(E) \times k^*)/\Delta E^*$$
fits in the following diagram of maps where $\phi_E$ denotes the
natural inclusion of $(KL)^* $ into $\GL_2(E)$, and $i,i_K,i_L$ are
inclusions of $k^*$ in $k^*, K^*,L^*$ respectively, and ${\rm Nm}_K$
and ${\rm Nm}_L$ are norm mappings from $(KL)^*$ to $K^*$ and $L^*$
respectively.

$$
\xymatrix{ &[(KL)^*\times k^*]/ \Delta(E^*) \ar@{->}[dl]_{(i_K{\rm Nm}_K, i_L{\rm Nm}_L)} \ar@{->}[dr]^{(\phi_E, i)} \\
\G[\SO(K)\times \SO(L)] & & ( \GL_2(E) \times k^*)/(\Delta E^*)}
$$
As the arrow on the left can be checked to be an isomorphism,
it follows from this
diagram that to check that a character of $\G[\SO(K) \times \SO(L)]
$ appears in the restriction of a representation of $\GSO(K \oplus
L) $, it is equivalent to check that its restriction to $[(KL)^*
\times k^*]/ \Delta E^*$ now thought of as a subgroup of $[\GL_2(E)
\times k^*]/\Delta(E^*)$ appears in the corresponding representation
of $[\GL_2(E)\times k^*]/\Delta(E^*)$.  Therefore we obtain the
following theorem.

\begin{thm}\label{thm8.1}

Let $\pi_1$ be an irreducible admissible representation of
$\GSp_4(k)$ obtained from the theta lift of a representation $\pi$
of $\GO_4(k)$ such that the normalized discriminant algebra
associated to the four dimensional quadratic space is a quadratic field
extension $E$ of $k$. Assume that in the identification of
$\GSO_4(k)$ with $( \GL_2(E) \times k^*)/(\Delta E^*)$, the
restriction of $\pi$ (from $\GO_4(k)$ to $\GSO_4(k)$) corresponds to
the representation $\pi_2 \boxtimes \mu$ of $\GL_2(E) \times k^*$. Let
$\psi$ be a nondegenerate character of $N$, where $N$ is the
unipotent radical of the Siegel parabolic $P=MN$ stabilizing a
maximal isotropic subspace $W_1$ of the four dimensional symplectic
space $W$,
 corresponding
to a quadratic form $q$ on $W_1$ which defines a quadratic field 
extension $K \not = E$. (The case $K=E$ is easier to analyze but we 
do not do it here.)
Then a character $\chi$ of $K^*$ such that $\chi|_{k^*}$ is the central character
of $\pi_1$, appears in $\pi_{1,\psi}$ if and only if the character $\chi\circ {\rm Nm}: (KL)^*
\rightarrow K^* \rightarrow {\Bbb C}^*$ of $(KL)^*$ appears in the restriction
of $\pi_2$ to $(KL)^*$ which by the theorem of Saito and
Tunnell is the case if and only if
\begin{eqnarray*}
\omega_{KL/E}(-1)\omega_{\pi_2}(-1) & = &
\epsilon(\pi_2 \otimes {\rm ind}_{KL}^{E} \chi^{-1}|_{KL}) \\
& = &  \epsilon(\pi_2 \otimes
{\rm Res}_E[ {\rm ind}_{K}^{k} (\chi^{-1})]) \\
& =  &
\epsilon({\rm ind}_E^k(\pi_2) \otimes {\rm ind}_{K}^{k} (\chi^{-1})).
 \end{eqnarray*}
Noting the generality that $\omega_{KL/E}(-1)=1$, and that
$\omega_{\pi_2}(-1)=1$ as $\pi_2$ is a representation of $\GL_2(E)$
which extends to a representation of $( \GL_2(E) \times k^*)/(\Delta
E^*)$, its central character restricted to $E^1$ is trivial, we get
that
$$\epsilon({\rm ind}_E^k(\pi_2) \otimes {\rm ind}_{K}^{k} (\chi^{-1}))=1$$
if and only if the character $\chi$ appears in the Bessel model of
$\pi$ as required by Conjecture \ref{conj1}.
\end{thm}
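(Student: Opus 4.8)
The plan is to combine Theorem~\ref{thm4} with the chain of exceptional isomorphisms set up above: Theorem~\ref{thm4} converts the Bessel model of the theta lift into a branching problem for $\pi$ restricted to $\G[\SO(W_1)\times\SO(W_1^\perp)]$, the diagram above transports that into a branching problem for $\pi_2$ restricted to $(KL)^*\subset\GL_2(E)$, and the Saito--Tunnell theorem together with standard $\epsilon$-factor identities turns the answer into the displayed criterion. \emph{Step 1.} Take $W_1$ to be the given quadratic space $(q,W_1)$, of discriminant $K$. Since $V$ is four dimensional of discriminant $E$ and Witt index one, a Hasse-invariant comparison shows that $(q,W_1)$ embeds into $V$, so hypothesis (1) of Theorem~\ref{thm4} holds; comparing discriminants then forces the orthogonal complement $W_1^\perp$ to be the two dimensional space whose discriminant is the third quadratic field $L$ in the biquadratic algebra $KE$, so that $KL=KE$ and the norm map in the statement is ${\rm Nm}_{KL/K}$. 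Theorem~\ref{thm4}(2) now says that a character $\chi$ of $\GSO(W_1)=K^*$ occurs in $\pi_{1,\psi}$ if and only if the pullback $\chi^\vee$ of $\chi^{-1}$ under the natural map $\G[\SO(W_1)\times\SO(W_1^\perp)]\to\GSO(W_1)$ occurs as a quotient in the restriction of $\pi$, equivalently of $\pi_2\boxtimes\mu$, to $\G[\SO(W_1)\times\SO(W_1^\perp)]=\G[\SO(K)\times\SO(L)]$.

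\emph{Step 2.} Next I would push this condition through the diagram above identifying $\G[\SO(K)\times\SO(L)]$ with $[(KL)^*\times k^*]/\Delta E^*$ and embedding the latter into $\GSO(K\oplus L)\cong(\GL_2(E)\times k^*)/\Delta E^*$ via $(\phi_E,i)$. Under this identification the composite $[(KL)^*\times k^*]/\Delta E^*\to\GSO(W_1)=K^*$ is $(y,s)\mapsto s\cdot{\rm Nm}_{KL/K}(y)$, so $\chi^\vee$ becomes the character $(y,s)\mapsto\chi^{-1}({\rm Nm}_{KL/K}(y))\,\chi^{-1}(s)$; the $k^*$-component $\chi^{-1}$ is forced to agree with $\mu$ by the hypothesis $\chi|_{k^*}=\omega_{\pi_1}$ and the compatibility among the central characters of $\pi_1$, $\pi_2$ and $\mu$. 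Keeping track of the contragredients and of the conjugation built into this isomorphism (as in the discriminant-one bookkeeping preceding Corollary~\ref{corollary5}), the condition becomes: the character $\chi\circ{\rm Nm}_{KL/K}$ of $(KL)^*$, viewed inside $\GL_2(E)$ through $\phi_E$, occurs in $\pi_2$ restricted to $(KL)^*$, with the needed central character compatibility $\chi\circ{\rm Nm}_{KL/K}|_{E^*}=\omega_{\pi_2}$ coming from the same bookkeeping.

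\emph{Step 3.} This last condition is governed by the Saito--Tunnell theorem for the quadratic extension $KL/E$ and the representation $\pi_2$ of $\GL_2(E)$: the character occurs if and only if $\epsilon(\pi_2\otimes{\rm Ind}_{KL}^E(\chi^{-1}|_{KL}))=\omega_{KL/E}(-1)\omega_{\pi_2}(-1)$. The displayed chain of equalities is then routine: the Mackey relations $W_k=W_KW_E$ and $W_K\cap W_E=W_{KL}$ give ${\rm Ind}_{KL}^E(\chi^{-1}|_{KL})={\rm Res}_E[{\rm Ind}_K^k(\chi^{-1})]$; inductivity of local $\epsilon$-factors turns $\epsilon(\pi_2\otimes{\rm Res}_E(\sigma))$ into $\epsilon({\rm Ind}_E^k(\pi_2)\otimes\sigma)$ for $\sigma={\rm Ind}_K^k(\chi^{-1})$, the Langlands--Deligne constant $\lambda_{E/k}$ entering to the fourth power and hence trivially since $\lambda_{E/k}(\psi_{00})^2=\omega_{E/k}(-1)$; and the right-hand side equals $1$ by the stated generalities $\omega_{KL/E}(-1)=1$ and $\omega_{\pi_2}(-1)=1$. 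Since the Langlands parameter of $\pi_1$ is ${\rm Ind}_E^k$ of that of $\pi_2$ up to a twist by $\mu$ which one normalizes away by the twist-invariance of Conjecture~\ref{conj1}, the resulting condition $\epsilon({\rm Ind}_E^k(\pi_2)\otimes{\rm Ind}_K^k(\chi^{-1}))=1$ is precisely the $\epsilon$-factor criterion of Conjecture~\ref{conj1}.

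I expect Step~2 to be the delicate point: one must track the map $\G[\SO(W_1)\times\SO(W_1^\perp)]\to\GSO(W_1)$, the isomorphism with $[(KL)^*\times k^*]/\Delta E^*$ (which, as in the quaternionic case, carries a conjugation), and the embedding via $(\phi_E,i)$, all while keeping the $\mu$/central-character bookkeeping and the distinction between ``quotient'' and ``subrepresentation'' consistent, so that the character emerging on the $\GL_2(E)$ side is exactly $\chi\circ{\rm Nm}$ and not a twist of it. By contrast, the Hasse-invariant check of Step~1 and the $\lambda$-factor accounting in Step~3 are essentially routine.
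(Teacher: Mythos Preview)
Your proposal is correct and follows essentially the same approach as the paper: apply Theorem~\ref{thm4} to reduce to a branching problem for $\G[\SO(K)\times\SO(L)]$, transport it through the diagram $[(KL)^*\times k^*]/\Delta E^*\cong\G[\SO(K)\times\SO(L)]\hookrightarrow(\GL_2(E)\times k^*)/\Delta E^*$ set up just before the theorem, and then invoke Saito--Tunnell together with the inductivity/restriction identities for epsilon factors. Your write-up is in fact more explicit than the paper's in a couple of places (the Hasse-invariant check that $(q,W_1)$ embeds in $V$, and the $\lambda_{E/k}^4=1$ bookkeeping), but the architecture is identical.
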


\begin{rem} There is a form of this theorem for the rank 1 form
$\GSp_D(4)$ of $\GSp_4(k)$ too in which one would be considering
theta lifting from an orthogonal group in 4 variables defined  using
$D$ by taking a skew-Hermitian matrix in $\GL_2(D)$ whose
discriminant in $k^*/k^{*2}$ defines a quadratic extension $E$ of
$k$. In this case, the orthogonal group turns out to be $(D_E^*
\times k^*)/E^*$ with $D_E$ the unique quaternion division algebra
over $E$, and a similar analysis as done for deducing the previous
theorem works exactly in the same way, confirming Conjecture
\ref{conj1} for such representations of $\GSp_D(4)$.
\end{rem}

\section{Dual pairs involving division algebras}
In this section we briefly recall the formalism of dual reductive
pairs which involve quaternion division algebras; the final goal of this
section will be to state the analogue of theorem 4 in this context.

Let $D$ be a  quaternion division algebras with its canonical involution
$x\rightarrow \bar{x}$. Using this involution, right $D$-modules
can be identified to left $D$-modules.

Let $V$ be a right $D$-module, and $H: V \times V \rightarrow D$
a $\epsilon$-hermitian form on $V$ which is linear in the second
variable, so that
\begin{enumerate}
\item $H(v_1d_1,v_2d_2)= \bar{d}_1H(v_1,v_2)d_2.$

\item $\overline{H(v_1,v_2)} = \epsilon H(v_2,v_1).$ (This forces $\epsilon$ to
be $\pm 1$.)
\end{enumerate}

If $\epsilon = 1$ (resp., $\epsilon = -1$), an $\epsilon$-hermitian form is
called hermitian (resp., skew-hermitian).

Let $V_1$ be a right $D$-module together with a $\epsilon_1$-hermitian
form linear in the second variable, and $V_2$ a left $D$-module
together with a $\epsilon_2$-hermitian form $H_2$ which is linear
in the first variable. Then $V_1 \otimes_DV_2$ is a vector space over
$k$ together with a natural bilinear form $H = H_1 \otimes H_2$ given
by
$$H(v_1\otimes v_2,w_1\otimes w_2) = {\rm tr}_{D/k}({H_1(v_1,w_1)}
\overline{H(v_2,w_2)}).$$ If $\epsilon_1\epsilon_2 = -1$, as will
always be the case in what follows, $H$ will be a symplectic form on
$V_1\otimes_D V_2$. In this case, the isometry group $G_1$ of
$(V_1,H_1)$ to be denoted by $U(V_1)$, and $G_2$ of $(V_2,H_2)$ to
be denoted by $U(V_2)$, form a dual reductive pair inside
$\Sp(V_1\otimes_DV_2)$ in the sense of Howe. We let $\GU(V_1)$ and
$\GU(V_2)$ denote the corresponding similitude groups.

It is known that to get a form of an orthogonal group, we need to
take a skew-hermitian form, and that to get a form of the symplectic
group, we need to take a hermitian form. As an example of interest
for our work, for $a \in D^*$, let $D(a)$ denote the one dimensional
right $D$-module which is $D$ itself together with the form
$H(d_1,d_2)= \bar{d}_1a d_2$. This form is skew-hermitian if  $a +
\bar{a}=0$, and hermitian if $a = \bar{a}$. Assuming $a$ is such
that $a + \bar{a}=0$, the group $U(D(a))$ is an orthogonal group in
two variables, and $GU(D(a)) = K^*$ where $K$ is the quadratic
extension of $k$ generated by $a$.

Assume that $H_1$ is a skew-hermitian form on $V_1$, and $H_2$ is a
hermitian form on $V_2$. Let $V_2=W_2 \oplus W_2^\vee$ be a complete
polarization of $V_2$. The weil representation of
$\Sp(V_1\otimes_DV_2)$ is realized on the Schwartz space of
functions on $V_1 \otimes_D W_2^\vee$ on which $\U(V_1)$ acts in the
natural way. The polarization  $V_2=W_2 \oplus W_2^\vee$ gives rise
to the parabolic $P$ in $\U(V_2)$ stabilizing the subspace $W_2$
with $\GL(W_2)$ as the Levi subgroup, and the additive group of
skew-hermitian forms on $W_2^\vee$ as $N$. Thus the character group
of $N$ can be identified to the additive group of skew-hermitian forms on
$W_2$.

With these preliminaries, we state the analogue of Theorem
\ref{thm4} in this context; application of this result to theta
lifting between
 $\GSp_D(4)$, and $\GSO_D(4)$ will not be explicitly stated.

\begin{thm} Let $\pi_1$ be an irreducible admissible representation of
$\GU(V_1)$, and $\pi_2$ that of $\GU(V_2)$. Assume that $\pi_2 =
\Theta(\pi_1)$ is the theta lift of $\pi_1$ to $\GU(V_2)$. Let
$\psi$ be a nondegenerate character of the unipotent radical $N$ of
the Siegel parabolic $P=MN$ of $\GU(V_2)$ stabilizing a maximal
isotropic subspace $W_2$ of $V_2$. Assume that $\psi$ corresponds to
a skew-hermitian form $H$ on $W_2$. Then an irreducible
representation $\chi$ of $\GU(W_2)$ appears in $\pi_{2,\psi}$ as a
quotient if and only if

\begin{enumerate}
\item $(H,W_2)$ can be embedded in the skew hermitian space $V_1$;
let $W_2^\perp$
denote the orthogonal complement of $W_2$ sitting inside $V_1$
through this embedding.

\item The representation $\chi^\vee$ of $\G[\U(W_2) \times \U(W_2^\perp)]$
appears as a quotient in the representation $\pi_1$ of $\GU(V_1)$
restricted to $\G[\U(W_2) \times \U(W_2^\perp)]$, where $\chi^\vee$
is obtained by pulling back the contragredient of $\chi$ under the
natural map $\G[\U(W_2) \times \U(W_2^\perp)]  \rightarrow \GU(W_2)
$.
\end{enumerate}
\end{thm}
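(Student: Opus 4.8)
The plan is to follow, line for line, the argument that proved Theorem \ref{thm4}, replacing the split orthogonal dual pair $(\O(V),\Sp(W))$ by the quaternionic dual pair $(\U(V_1),\U(V_2))$ and orthogonal geometry by $\epsilon$-hermitian geometry over $D$. First I would make explicit the action of the Siegel unipotent radical $N$ of $\U(V_2)$ on the Weil representation realized on $\mathcal{S}(V_1\otimes_D W_2^\vee)$, as recalled in the previous section: an element $n\in N$ corresponding to a skew-hermitian form $q_n$ on $W_2^\vee$ acts by multiplication by the scalar $\psi$ of the form $H_1\otimes q_n$ evaluated at the argument, the relevant identity being the reduced-trace identity of Lemma \ref{lemma2} transported to the quaternionic setting (here $H_1\otimes H_2$ is built from $\mathrm{tr}_{D/k}$). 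From this one reads off the analogue of Corollary \ref{corollary1}: $\mathcal{S}(V_1\otimes_D W_2^\vee)_\psi\neq 0$ exactly when the skew-hermitian form on $W_2$ attached to $\psi$ is the pullback $x^{*}H_1$ of the form on $V_1$ along some $D$-linear map $x:W_2\to V_1$ --- which is condition (1).

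Next I would apply the general Lemma \ref{lemma3} with $X=\Hom_D(W_2,V_1)$, on which $\U(V_1)$ and $M_\psi\cong\U(W_2)$ (the stabilizer of $H$ in $\GL_D(W_2)$) act, with $\psi_x$ the character of $N$ given by $x^{*}H_1$ and $X_\psi=\{x:x^{*}H_1=H\}$, a closed subvariety; the lemma then gives $\mathcal{S}(X)_\psi\cong\mathcal{S}(X_\psi)$ as $\U(V_1)\times\U(W_2)$-modules. The key geometric input is the Witt extension theorem for skew-hermitian spaces over the division algebra $D$: it gives that $\U(V_1)\times\U(W_2)$ acts transitively on $X_\psi$, with the stabilizer of a chosen embedding equal to $\U(W_2^\perp)\times\Delta\U(W_2)$, so that $\mathcal{S}(X_\psi)\cong\mathrm{ind}_{\U(W_2^\perp)\times\Delta\U(W_2)}^{\U(V_1)\times\U(W_2)}\mathbb{C}$. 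Passing to similitudes by the hermitian analogue of Lemma \ref{lemma4}, and noting (as for $\delta_P$ earlier) that the normalizing characters $|\nu(\cdot)|$ and $\chi_V(\det\cdot)|\det\cdot|$ of the Weil representation restrict trivially to the subgroups in play, yields the quaternionic analogue of Corollary \ref{corollary4}: the $\psi$-twisted Jacquet functor of the big Weil representation $\Omega$ of $\GU(V_1)\times\GU(V_2)$ is $\mathrm{ind}_{\G[\U(W_2^\perp)\times\Delta\U(W_2)]}^{\GU(V_1)\times\GU(W_2)}\mathbb{C}$.

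Finally I would feed in $\pi_1$ exactly as in the proof of Theorem \ref{thm4}. Since $\Theta(\pi_1)\otimes\pi_1$ is the maximal $\pi_1$-isotypic quotient of $\Omega$ and the $\psi$-twisted Jacquet functor is exact, $\Hom_{\GU(W_2)}(\pi_{2,\psi},\chi)=\Hom_{\GU(W_2)}(\Theta(\pi_1)_\psi,\chi)$ is computed as $\Hom_{\GU(V_1)\times\GU(W_2)}(\Omega_\psi,\pi_1\boxtimes\chi)$, and Frobenius reciprocity for the induced representation above identifies this with $\Hom_{\G[\U(W_2)\times\U(W_2^\perp)]}(\pi_1\boxtimes\chi,\mathbb{C})$, i.e. it is nonzero precisely when $\chi^\vee$, pulled back along $\G[\U(W_2)\times\U(W_2^\perp)]\to\GU(W_2)$, occurs as a quotient of $\pi_1$ restricted to $\G[\U(W_2)\times\U(W_2^\perp)]$; this is condition (2), and it of course forces $X_\psi\neq\emptyset$, i.e. condition (1). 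I expect the only genuine obstacle to be the Witt step: over a quaternion division algebra one needs the $\epsilon$-hermitian form of cancellation/extension (classical, but not the orthogonal statement used for Theorem \ref{thm4}), together with its similitude refinement and the precise identification of the stabilizer with $\U(W_2^\perp)\times\Delta\U(W_2)$; the bookkeeping distinguishing $\Theta(\pi_1)$ from its irreducible quotient $\theta(\pi_1)$ is handled automatically by the induction-and-reciprocity argument, as in the $\GSp$ case.
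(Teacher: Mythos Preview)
Your proposal is correct and is exactly the approach the paper intends: the paper does not give a separate proof of this theorem but simply introduces the quaternionic dual-pair formalism and then states the result as ``the analogue of Theorem \ref{thm4} in this context,'' meaning one transports the chain Lemma \ref{lemma2} $\to$ Corollary \ref{corollary1} $\to$ Lemma \ref{lemma3} $\to$ Corollaries \ref{corollary2}--\ref{corollary4} $\to$ Theorem \ref{thm4} verbatim to the $\epsilon$-hermitian setting over $D$. Your identification of the Witt extension theorem for skew-hermitian forms over $D$ (and its similitude refinement) as the only step requiring genuine new input matches the paper's implicit reliance on Lemma \ref{lemma4} in the orthogonal case.
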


\noindent{\bf Example :} The orthogonal group defined by the skew-hermitian
form 
$$\left ( \begin{array}{cc} a & 0 \\ 0 & b
\end{array} \right ), {\rm~~~~~for~~ a,b} \in D^*, 
{\rm~~tr}(a)={\rm~~tr}(b)=0,$$ defines an orthogonal group in four 
variables which is,
\begin{enumerate}
\item $GSO_D(4)\cong  [D^* \times GL_2(k)]/k^*$ if $ab \in k^*$;
\item $GSO_D(4) \cong [D_E^* \times k^*]/E^*$ if $ab \in E^*$ 
(and not $k^*$), where $E$ 
is a quadratic extension of $k$, and $D_E$ is the unique
quaternion division algebra over $E$.
\end{enumerate}

\section{Proof of Conjecture \ref{conj1} for representations arising from Weil
representation through $\GSO(4)$}

Most of the work to prove Conjecture \ref{conj1} is already done. We
recall the Langlands parameter of representations $\GSp_4(k)$
arising from theta correspondence from representations of $\GO(4)$,
and then do the necessary epsilon factor calculation to verify
Conjecture \ref{conj1} from results proved in the last section.

As recalled in the introduction, for a four dimensional quadratic
space $V$, $\GSO(V)$ has the structure of one of the following
groups:

\begin{enumerate}
\item $\GSO(V^s) \cong (\GL_2(k) \times \GL_2(k) )/\Delta k^*.$

\item  $\GSO(V^a) \cong (D^* \times D^*)/\Delta k^*. $

\item $\GSO(V^d) \cong [\GL_2(E) \times k^*]/\Delta E^*,$
\end{enumerate}
where $\Delta k^* = k^*$ sits as $(t,t^{-1})$, and
$\Delta E^* = E^*$ sits inside $\GL_2(E) \times k^*$ via its natural
embedding in $\GL_2(E)$, and in $k^*$ by the inverse of the norm mapping.

In cases $(1)$ and $(2)$, an irreducible representation of $\GSO(V)$
is a tensor product of irreducible representations $\tau_1 \boxtimes
\tau_2$ where $\tau_1$ and $\tau_2$ are both irreducible
representations of $\GL_2(k)$ or of $D_k^*$, and so have Langlands
parameters $\sigma_1$ and $\sigma_2$. The Langlands parameter of the
representation of  $\GSp(4)$ arising from theta correspondence from
an irreducible
 representation of $\GO(V)$ which restricted
to $\GSO(V)$ is $\tau_1 \boxtimes \tau_2$ in  cases $(1)$ and $(2)$
is,
$$\sigma_1 \oplus \sigma_2.$$

In case $(3)$, an irreducible representation of $\GSO(V^d)$
corresponds to an irreducible representation $\tau$ of $\GL_2(E)$
whose central character is invariant under ${\rm Gal}(E/k)$,
together with a character $\chi$ of $k^*$ such that the central
character of $\tau$ can be considered to be the character of $E^*$
obtained from the character $\chi$ of $k^*$ through the norm
mapping. In this case, the Langlands parameter of the representation
$\GSp_4(k)$ arising from theta correspondence from this
representation of $\GO(V^d)$  is,
$${\rm Ind}_E^k \sigma.$$

The
epsilon factor $\epsilon(\sigma \otimes {\rm Ind}_K^k(\chi^{-1}))$
in cases $(1)$ and $(2)$ is simply the product of the epsilon
factors, $\epsilon(\sigma_1 \otimes {\rm Ind}_K^k(\chi^{-1}))$ and
$\epsilon(\sigma_2 \otimes {\rm Ind}_K^k(\chi^{-1}))$ which by the
theorem of Saito and Tunnell can be easily interpreted in terms of
the existence of the character $\chi$ of $K^*$ in the
representations $\tau_1$, $\tau_2$, making Theorem \ref{thm1.4} a consequence
of Corollaries \ref{corollary5} and \ref{corollary6}. Similarly in
case $(3)$, conjecture 1 is equivalent to Theorem \ref{thm8.1}.

 These deductions have been made assuming of course that $\Theta(\pi)
= \theta(\pi)$, which is true in particular when $\pi$ is a
supercuspidal representation. We use the methods of theta for
exactly these representations, except one more case when the
representation of $\GSO(V^s)$ is $\tau= \tau_1 \boxtimes \tau_2$ with
$\tau_1$ supercuspidal of trivial central character and $\tau_2$ the
trivial representation. In this case it is easy to see that the
theta lift of $\tau$ is a nongeneric representation of $\GSp_4(k)$
which has no choice but to be  the nongeneric component of the
principal series representation of $\GSp_4(k)$ induced from the
representation $(\tau_1 |\cdot |^{1/2}, |\cdot|^{-1/2})$ of the
Siegel parabolic. In this case corollary 6 implies that the only
characters $\chi: K^* \rightarrow {\Bbb C}^*$ for which there is a
Bessel model for $\pi = \theta(\tau) = \Theta(\tau)$ is the trivial
character if it  appears in $\tau_1$ restricted to $K^*$. This is
exactly the conclusion that was desired at the end of Section
\ref{section5.1}, proving Conjecture 1 in this case.

Combining the results for irreducible principal series, reducible
principal series representations induced from supercuspidal
representations of the Siegel parabolic, completely reducible
principal series representations (thus forming a single $L$-packet),
twists of Steinberg, we are left exactly with those reducible
principal series arising out of supercuspidal representations of
Klingen parabolic which have a non-trivial self-twist, and whose
Langlands parameters are of the form $\sigma \otimes {\rm St}_2$ for
a two dimensional representation of the Weil group of $k$ with a
non-trivial self-twist. As these representations do not arise from
$\GO(4)$, we seem to be out of luck dealing with such
representations, for which an explicit suggestion about Bessel model
was made at the end of Section \ref{section5.2}.

\section{Theorem \ref{thm1.3}}

We next consider the case of the dual reductive pair
$(\Sp(4),\O(6))$ where we will assume that $\O(6)$ is either split,
or is a rank 1 form of it. Thus $\GSO(6)$ will be one of the two
groups:

\begin{enumerate}
\item $[\GL_4(k) \times k^*]/\{(z,z^{-2}): z \in k^*\},$

\item $[\GL_2(D) \times k^*]/\{(z,z^{-2}): z \in k^*\}.$

\end{enumerate}

In this case we will be looking at the embedding of a two
dimensional space in a six dimensional space, say $K \hookrightarrow
K \oplus K \oplus {\Bbb H}$, a direct sum of quadratic spaces, which
gives an  embedding of $\G[\SO(K) \times \SO(K \oplus {\Bbb H})]$
inside $\GSO(K \oplus K \oplus {\Bbb H})$. We remind ourselves that
$$\GSO(K \oplus {\Bbb H}) \cong [\GL_2(K) \times k^*]/K^*,$$
where $K^*$ sits inside $ [\GL_2(K) \times k^*]$ as $(x,{\rm Nm}
x)$. Therefore there is a natural embedding of $G[\SO(K) \times
\SO(K \oplus {\Bbb H})]$ inside $K^* \times [\GL_2(K) \times
k^*]/K^* $. We claim that under this embedding, the image of
$\G[\SO(K) \times \SO(K \oplus {\Bbb H})]$ inside $K^*  \times
[\GL_2(K) \times k^*]/K^* $ can be identified to $[\GL_2(K) \times
k^*]/k^* $ where $k^*$ sits naturally as the scalar matrices in
$\GL_2(K)$, and in $k^*$ through $t\rightarrow t^2$. To prove this
claim, note that there is a natural map from  $[\GL_2(K) \times
k^*]/k^* $ to $[\GL_2(K) \times k^*]/K^* $, and therefore to $K^*
\times [\GL_2(K) \times k^*]/K^* $ in which $(X,t) $ goes to
$t^{-1}\det X$ in $K^*$. It is easy to check that this map is
injective, and its image is exactly $\G[\SO(K) \times \SO(K \oplus
{\Bbb H})]$.

Using the identifications indicated above, the embedding of
$\G[\SO(K) \times \SO(K \oplus {\Bbb H})]$ inside $\GSO(K \oplus K
\oplus {\Bbb H})$, becomes the standard embedding of $[\GL_2(K)
\times k^*]/k^*$ inside $[\GL_4(k) \times k^*]/\{(z,z^{-2}): z \in
k^*\},$ or inside $[\GL_2(D) \times k^*]/\{(z,z^{-2}): z \in k^*\}$
as the case may be, and further the natural map from $\G[\SO(K)
\times \SO(K \oplus {\Bbb H})] = [\GL_2(K) \times k^*]/k^*$ to $K^*=
\GSO(K)$ appearing in theorem 4 is nothing but $(X,t) $ goes to
$t^{-1}\det X$ in $K^*$, and thus theorem 4 detects the appearance
of one dimensional representations of $\GL_2(K)$ as a quotient of a
representation of $\GL_4(k)$ which arise from theta lifting from
$\GSp_4(k)$.

From the work of Gan and Takeda \cite{gan-takeda2}, it follows that
a representation of $\GL_4(k)$ arises as a theta lift from
$\GSp_4(k)$ if and only if its Langlands parameter belongs to the
symplectic similitude group $\GSp_4({\Bbb C})$. By the remark
following theorem 4, as soon as a character of $\GL_2(K)$ appears as
a quotient of
 a representation of $\GL_4(k)$, the representation of $\GL_4(k)$ arises
 from theta
lifting from $\GSp_4(k)$, and therefore its parameter belongs to the
symplectic similitude group.

To get the finer assertion in Theorem \ref{thm1.3} regarding the
epsilon factors, one needs to just use  the theorem about Bessel
models for $\GSp_4(k)$, which is what we just proved in the earlier
sections in odd residue characteristic. In even residue
characteristic, since Bessel models have not been completely
determined for `exceptional' representations of $\GSp_4(k)$, same
gap remains here.

We also note that as usual the methods of theta correspondence give
results only for those irreducible representations of $\GL_4(k)$
which arise as $\Theta(\pi)$ for an irreducible representation $\pi$
of $\GSp_4(k)$, therefore once again we will use the methods of
theta correspondence only for supercuspidal representations of
$\GL_4(k)$.  Other representations of $\GL_4(k)$ for which there is
a character of $\GL_2(K)$ appearing in it as a quotient, must arise
from parabolic induction of an irreducible representation of the
$(2,2)$ parabolic (as their parameter is in $\GSp_4({\Bbb C})$).
If we are dealing with non-discrete series but generic
representation of $\GL_4(k)$, we can assume that the representation
is a full induced representation from an irreducible representation
of the $(2,2)$ parabolic.

For the full induced representation from the $(2,2)$ parabolic subgroup,
 the Mackey theory which answers questions about
 restriction of an induced representation to a subgroup can be worked out
easily as the double coset,
$$\GL_2(K)\backslash \GL_4(k)/P_{(2,2)},$$
can be identified to $\GL_2(K)$-orbits on the set of subspaces $W$
of $V$ of dimension 2 which is easily seen to consist of two orbits,
one represented by a $W$ which is invariant under $K$, and the other
which is not. We omit the details needed here for completing the proof of
theorem 1.2, but remind the reader that we have not been
able to handle representations of $\GL_4(k)$ whose Langlands
parameter is of the form $\sigma_1 \otimes {\rm St}_2$ for a
two dimensional parameter $\sigma_1$ with a nontrivial self-twist;
these are of course the notorious generalized Steinberg
representations.

We end this section by  formulating the following general
conjecture, which is a modified form of a conjecture in
\cite{Prasad1}.

\begin{conj}\label{conj2}
Let $D$ be a quaternion division algebra over a local field $k$, $K$
a quadratic separable algebra over $k$. Let $\pi$ be an irreducible,
admissible, generic representation of $\GL_{2n}(k)$ with central
character $\omega_\pi$. Let $\chi$ be a character of $K^*$, also
considered as a character of $\GL_n(K)$ via the determinant map
$\det: \GL_n(K)  \rightarrow K^*$, such that $\chi^n|_{k^*} =
\omega_{\pi}$. Then the character $\chi$ of $\GL_n(K)$ appears as a
quotient in $\pi$ restricted to $\GL_n(K)$  if and only if

\begin{enumerate}

\item The Langlands parameter of $\pi$ takes values in $\GSp_{2n}({\Bbb C})$
with similitude factor $\chi|_{k^*}$.

\item The epsilon factor
$\epsilon( \pi \otimes {\rm { Ind}_K^k}(\chi^{-1})) = 1. $
\end{enumerate}

Similarly, assuming that $\pi$ can be transferred to a
representation $\pi'$ of $\GL_n(D)$, and that $K$ is a quadratic
field extension of $k$ so that $\GL_n(K)$ embeds into $\GL_n(D)$,
the character $\chi$ of $\GL_{n}(K)$ appears in $\pi'$ restricted to
$\GL_n(K)$ as a quotient if and only if

\begin{enumerate}

\item The Langlands parameter of $\pi$ takes values in $\GSp_{2n}({\Bbb C})$
with  similitude factor $\chi|_{k^*}$.

\item The epsilon factor
$\epsilon( \pi \otimes {\rm { Ind}_K^k}(\chi^{-1})) = -1. $
\end{enumerate}

\end{conj}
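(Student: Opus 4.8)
The plan is to follow the architecture of the case $n=2$ (Theorem~\ref{thm1.3}): reduce by Mackey theory to supercuspidal $\pi$, realize supercuspidals with symplectic parameter through a correspondence in which the relevant branching law is known, and then convert the answer into the stated root-number dichotomy by the theorem of Saito and Tunnell. \emph{Step 1 (reduction to supercuspidals).} Writing $K=k[J]$ with $J^{2}\in k^{*}$, the subgroup $\GL_{n}(K)\subset\GL_{2n}(k)$ is the centralizer of $J$, hence the fixed-point group of the inner involution $\mathrm{Ad}(J)$; consequently the double cosets $\GL_{n}(K)\backslash\GL_{2n}(k)/P$ for a parabolic $P$ are parametrized by $\GL_{n}(K)$-orbits of flags, which for the Levi blocks occurring in a symplectic-type parameter are finite in number and governed by the dimensions of the intersections $W\cap JW$, exactly as in the analysis of $\GL_{2}(K)\backslash\GL_{4}(k)/P_{(2,2)}$ used in the proof of Theorem~\ref{thm1.3}. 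A generic $\pi$ whose parameter lies in $\GSp_{2n}(\mathbb{C})$ is, by the standard-module theorem, either tempered or fully induced; a reducible symplectic-type parameter decomposes into symplectic-type blocks and pairs of blocks dual to one another up to the similitude twist, so such a $\pi$ is induced from a Levi $\prod\GL_{m_{i}}$ on which the desired statement is an instance of the same conjecture in lower rank together with the $\GL_{2}$-case. Running Mackey theory on the open and closed orbits, and using that $\epsilon(\pi\otimes{\rm Ind}_{K}^{k}(\chi^{-1}))$ is multiplicative along the inducing Levi, one reduces to $\pi$ supercuspidal, for which moreover $\Theta(\pi)=\theta(\pi)$ so that the Weil-representation machinery developed above applies without the $\Theta$-versus-$\theta$ caveat.

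\emph{Step 2 (from the $\GL_{n}(K)$-period to a Bessel period).} For $n=2$ one uses the exceptional relation between $\GSO_{6}$ and $\GL_{4}$ and the dual pair $(\Sp_{4},\O_{6})$: a supercuspidal $\pi$ of $\GL_{4}(k)$ with symplectic parameter is $\Theta(\pi_{0})$ for some $\pi_{0}$ on $\GSp_{4}(k)$, and under the geometric-algebra identification of $\GL_{2}(K)$ with $\G[\SO(W_{1})\times\SO(W_{1}^{\perp})]$ inside $\GSO_{6}$ (with the determinant $\GL_{2}(K)\to K^{*}$ matching the natural map $\G[\SO(W_{1})\times\SO(W_{1}^{\perp})]\to\GSO(W_{1})$), Theorem~\ref{thm4} rewrites the occurrence of $\chi$ in $\pi|_{\GL_{2}(K)}$ as its occurrence in the $\psi$-Bessel model of $\pi_{0}$; one then quotes the $(\SO_{2},\SO_{5})$ case of Gross--Prasad proved in the body of this paper. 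For general $n$ the substitute I would pursue is to transfer the supercuspidal $\pi$ of $\GL_{2n}(k)$ to a generic representation $\widetilde\pi$ of $\mathrm{GSpin}_{2n+1}(k)$ (whose dual group is $\GSp_{2n}(\mathbb{C})$), to prove --- by a relative-trace-formula comparison in the style of Jacquet--Rallis, or a see-saw/doubling argument --- that twisted $\GL_{n}(K)$-distinction of $\pi$ is equivalent to the non-vanishing of the corresponding Bessel period of $\widetilde\pi$, and then to invoke the $(\SO_{2},\SO_{2n+1})$ case of Gross--Prasad. Alternatively one could globalize, factor a global root number into local ones, and extract the local dichotomy from a global period-to-$L$-value criterion, in the style of the global theorems above.

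\emph{Step 3 (root-number bookkeeping).} Since $\sigma_{\pi}$ takes values in $\GSp_{2n}(\mathbb{C})$ with similitude factor $\chi|_{k^{*}}$, the virtual representation $\sigma_{\pi}\otimes{\rm Ind}_{K}^{k}(\chi^{-1})$ is self-dual, so by $\epsilon(U)\epsilon(U^{\vee})=\det U(-1)$ the contributions beyond the genuinely $\GL_{2}$-like constituents collapse to an explicit sign built from $\omega_{K/k}(-1)$ and $\chi(-1)$, exactly as in the principal-series computations carried out above and in Theorem~\ref{thm8.1}; Saito--Tunnell then identifies the surviving sign $\epsilon=\pm1$ with the appearance of $\chi$, the value $+1$ giving $\GL_{2n}(k)$ and $-1$ giving $\GL_{n}(D)$. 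The cases $K=k\oplus k$ and $K$ equal to the discriminant field of the ambient quadratic space, should they intervene, are handled separately and more easily, again as in Theorem~\ref{thm8.1}.

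\emph{The main obstacle.} For $n>2$ there is no exceptional isomorphism packaging $\GL_{2n}$ in an orthogonal dual pair together with $\GSp_{2n}$: both the identification of $\GL_{n}(K)$ with $\G[\SO(W_{1})\times\SO(W_{1}^{\perp})]$ and the statement that symplectic-parameter supercuspidals are theta lifts break down, so the heart of the problem becomes the comparison in Step~2 relating the $\GL_{n}(K)$-period to a $(\SO_{2},\SO_{2n+1})$ Bessel period, and then the higher-rank Gross--Prasad conjecture itself, which is open. Finally, the generalized Steinberg representations with parameter $\sigma\otimes{\rm St}_{2}$ for a two-dimensional $\sigma$ carrying a nontrivial self-twist will be an exceptional case here exactly as they are for $n=2$, since they lie outside the range of the relevant theta correspondence.
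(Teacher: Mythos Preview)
This statement is labeled \emph{Conjecture} in the paper and is not proved there; the authors explicitly introduce it with ``We end this section by formulating the following general conjecture,'' immediately after the proof of the $n=2$ case (Theorem~\ref{thm1.3}), and they follow it only with remarks on multiplicity one and a related branching problem. So there is no proof in the paper to compare your proposal against.

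Your proposal is honest about this: Step~2 for $n>2$ rests on two open ingredients --- a comparison of the twisted $\GL_n(K)$-period with a Bessel period on $\mathrm{GSpin}_{2n+1}$, and the Gross--Prasad conjecture for $(\SO_2,\SO_{2n+1})$ --- and you flag both in your ``main obstacle'' paragraph. That is the correct assessment, and it matches the paper's stance that this is a conjecture rather than a theorem. One further caution on Step~1: the claim that a reducible symplectic-type parameter decomposes so that the question reduces to lower-rank instances of the \emph{same} conjecture is a genuine induction on something not yet proved, so even the non-supercuspidal reduction is not self-contained; for $n=2$ the paper handles the principal-series cases by direct Mackey computations together with Saito--Tunnell rather than by such an induction, and the analogous direct analysis for general $n$ would itself be substantial. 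In short, your outline is a reasonable sketch of how one might attack the conjecture along the lines of the $n=2$ proof, but it is not a proof, and the paper does not claim one.
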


\begin{rem} Multiplicity 1 of the trivial character of $\GL_n(K)$ inside
an irreducible admissible representation of $\GL_{2n}(k)$ was proved
by J. Guo in \cite{Guo}, but the multiplicity 1 of more general
characters of $\GL_n(K)$  seems not to have been addressed in the
literature.
\end{rem}

\begin{rem} There is a very related branching law (in that it also
involves root numbers of a symplectic representation which is a tensor
product of a two dimensional orthogonal representation with a
symplectic representation) which should be of considerable interest.
It is to describe those characters of $\U(n)$ factoring
through ${\rm SU(n)}$ which appear
as a quotient in a representation of $\SO(2n+1)$ where $\U(n)
\hookrightarrow \SO(2n+1)$, and the groups are defined over any local field. 
We note that the dimension of
$\SO(2n+1)/\U(n)$ is the same as the dimension of $\SO(2n+1)/U$
where $U$ is the unipotent radical of the Borel subgroup of $\SO(2n+1)$,
so by some heuristic `most' representations of $\SO(2n+1)$ may have a
linear form invariant under $U(n)$.
\end{rem}

\section{Dual pairs involving division algebras: Archimedean Case}
\label{Li}

 Although it is not strictly  necessary to discuss theta
correspondence in the Archimedean case for this work, but since the
Archimedean theta correspondence was at the source of the work in
\cite{Takloo-Bighash} which we will complete in a later section, it
seems appropriate to discuss it, specially to bring out a difference
in the Archimedean correspondence and the non-Archimedean one.

We dual pairs $(U(V),U(W))$ discussed in the last section without
any specific base field continue to hold good in the case when $k =
{\Bbb R}$, and where the quaternion algebra is the Hamiltonian
${\Bbb H}$. We recall that over ${\Bbb H}$ a skew-Hermitian form of
any rank is unique, and can be taken to be
$$\bar{X}_1X_{n} + \bar{X}_2X_{n-2}+ \cdots + \bar{X}_nX_1.$$
The corresponding unitary group is conventionally written as
$O^*(2n)$; it is a form of $SO(2n)$, of real rank $[n/2]$, and is a
connected group.

Hermitian forms over  ${\Bbb H}$ are classified by a signature, and
up to isomorphism can be taken to be
$$\bar{X}_1iX_{1} + \bar{X}_2iX_{2}+ \cdots  \bar{X}_piX_p - \bar{X}_{p+1}
iX_{p+1} - \cdots -\bar{X}_niX_n.$$ The corresponding unitary group is
conventionally written as $Sp(p,n-p)$; it is a form of $Sp(2n)$, of
real rank ${\rm min}(p,n-p)$, and is a connected group.

The theta correspondence between $O^*(2n)$ and $Sp(p,q)$ is
considered in detail in \cite{Li-etal}, and their main theorem can
be succinctly stated as follows:

\begin{thm}Denoting $\hat{G}$ the isomorphism classes of irreducible
Harish-Chandra modules for a real Lie group $G$, the theta
correspondence gives a bijection of sets relating obvious Harish-Chandra
and Langlands parameters,
$$\bigsqcup_{p+q =n, n-1} \hat{{\rm Sp}}(p,q)  \longrightarrow \hat{O}^*(2n).$$
\end{thm}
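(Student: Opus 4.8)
The plan is to derive the bijection from Howe's archimedean duality theorem together with two structural facts about the correspondence in the near-equal-rank range: that the theta lifts from the groups ${\rm Sp}(p,q)$ with $p+q\in\{n-1,n\}$ together exhaust $\hat O^*(2n)$, and that each $\pi\in\hat O^*(2n)$ arises from exactly one of them. First I would set up the dual pair as in the preceding section: realize $(O^*(2n),{\rm Sp}(p,q))$ as $(\U(V_1),\U(V_2))$ over the Hamilton quaternions $\qH$, with $V_1$ an $n$-dimensional skew-Hermitian space and $V_2$ a Hermitian space of signature $(p,q)$, so that $V_1\otimes_{\qH}V_2$ is a real symplectic space containing $\U(V_1)\times\U(V_2)$ as a reductive dual pair. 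Restricting the oscillator representation $\omega$ of the relevant metaplectic group to this pair and invoking Howe's duality theorem for real reductive dual pairs gives, for each fixed $V_2$, a bijection $\pi\leftrightarrow\theta_{V_2}(\pi)$ between the irreducible Harish-Chandra modules of $O^*(2n)$ occurring as quotients of $\omega$ and those of ${\rm Sp}(p,q)$ occurring as quotients of $\omega$. Thus $\theta$ is well-defined and injective on its domain for each $(p,q)$, and the entire content of the theorem is exhaustion together with the disjointness of these domains as $(p,q)$ varies.

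For exhaustion I would work with first occurrence. Organize the Hermitian $\qH$-spaces into Witt towers (each obtained by fixing an anisotropic kernel, necessarily a definite form over $\qH$, and adjoining hyperbolic summands, so that $p+q$ changes by $2$ and $p-q$ is constant along a tower), and for $\pi\in\hat O^*(2n)$ let the first-occurrence index in a tower be the smallest dimension of a space in that tower for which the theta lift of $\pi$ is nonzero. A stable-range estimate shows these indices are finite, and the key input is the archimedean analogue of the Kudla--Rallis conservation relations, which bound and relate the first-occurrence indices of $\pi$ across the relevant towers; in the near-equal-rank range this forces $\pi$ to occur, at first occurrence, for exactly one pair $(p,q)$ with $p+q$ equal to $n$ or $n-1$. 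To pin down the actual signature $(p,q)$ rather than just $p+q$, I would carry out a lowest-$K$-type computation, decomposing the restriction of $\omega$ to the product of maximal compact subgroups of $O^*(2n)$ and ${\rm Sp}(p,q)$: the lowest $K$-type of $\pi$ determines the unique signature capable of hosting it at first occurrence, which simultaneously rules out collisions between distinct $(p,q)$ with the same $p+q$. Together with the conservation relations this yields both exhaustion of $\hat O^*(2n)$ and disjointness of the domains.

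It then remains to check that the bijection matches the ``obvious'' Harish-Chandra and Langlands parameters. For this I would compute $\theta_{V_2}(\pi)$ explicitly using the Kudla filtration and the mixed model of $\omega$: for tempered $\pi$ one verifies directly that the lift is tempered with data obtained from that of $\pi$ by the expected twist by the quadratic characters attached to the splitting of the oscillator representation and by the expected $\rho$-shift depending on $\dim V_1$ and $\dim V_2$; for a general $\pi$, realized as a Langlands quotient of a representation parabolically induced from tempered data, one uses compatibility of $\theta$ with parabolic induction (the induction principle) to reduce the parameter matching to the tempered case on the Levi. The main obstacle is precisely the combination of exhaustion with this bookkeeping: establishing the archimedean conservation relations and controlling the lowest $K$-types and the $\rho$-shifts uniformly enough to guarantee that nothing in $\hat O^*(2n)$ is missed and nothing is double-counted. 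This is the genuine technical heart of the statement, and it is exactly what is carried out in \cite{Li-etal}; I would treat the reduction above as the conceptual skeleton and defer the explicit computations to that reference.
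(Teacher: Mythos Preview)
The paper does not prove this theorem at all: it is stated as a succinct summary of the main result of \cite{Li-etal} and is used as a black box. There is nothing in the paper to compare your argument against beyond the citation itself.

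Your sketch is a plausible outline of the ingredients that go into such a result (Howe duality, first-occurrence/conservation-type input, lowest $K$-type control, compatibility with parabolic induction for parameter matching), and you correctly identify that the substantive work lives in \cite{Li-etal}. Since the paper's own ``proof'' is simply the reference to that paper, your proposal already does more than the paper does, and its concluding deferral to \cite{Li-etal} aligns exactly with how the authors treat the statement.
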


\begin{rem}For our purposes, an explicit version of this theorem for
$n=2$ will be what would be relevant, in which case $O^*(4)\cong
SU(2) \times SL_2({\Bbb R})$. A discrete series representation of
$SU(2) \times SL_2({\Bbb R})$ is of the form $F_n \boxtimes D_{\pm m}$,
for integers $n>0, m>0$, where $F_n$ is the finite dimensional
representation of $SU(2)$ of dimension $n$, and $D_{\pm (m)}$ is a
discrete series representation with highest (lowest) weight $m+1$,
resp.  $-(m+1)$. Then the theorems of \cite{Li-etal} imply that  the
theta lift of $F_n \boxtimes D_m$ to $Sp(1,1)$ is zero if $n \leq |m|$.
The representations $F_n \boxtimes D_{\pm n}$ arise from theta lift of
$F_n$ from $Sp(1,0)=Sp(0,1) = SU(2)$, and the representations $F_n
\boxtimes D_m$ for $n < |m|$ from the compact groups $Sp(2,0)$ and
$Sp(0,2)$.
\end{rem}

\begin{rem}\label{weird} As $GO^*(4) = [D^* \times GL_2(k)]/k^*$,
there is a curious involution, call it $\iota$, on the discrete
series representations of this group taking $\pi_1 \boxtimes \pi_2$ to
$JL(\pi_2) \boxtimes JL(\pi_1)$ where for a representation $\pi$ of
$D^*$, $JL(\pi)$ denotes the representation of $GL_2(k)$ obtained
from $\pi$ by the Jacquet Langlands correspondence, and vice-versa.
A  consequence of the previous remark is that for discrete series
representations $\pi$ of $O^*(4)$, for $k = {\Bbb R}$, theta lift to
$Sp(1,1)$ is nonzero for exactly one of the representations $\pi$ or
$\iota(\pi)$, but that both of them might be nonzero in the
non-Archimdean case.
\end{rem}

\section{Discrete series over the reals}

It is well-known that automorphic representations associated to
holomorphic Siegel modular forms are not generic; that is, they fail
to have Whittaker models. It is also known that the genericity of
such representations specifically fails at the archimedean place.
For this reason it is desirable to determine when holomorphic
discrete series representations posses Bessel models which seem to
be the next best thing in applications to $L$-functions
\cite{Furusawa, Furusawa-Shalika}. In the proof of our theorem in
the next paragraph we will use some results of Kolk and Varadarajan
\cite{K-Var} on invariant distributions. For the convenience of the
reader, and for ease of reference,  we  include a review of their
results here.

\subsection{A review of some results of Kolk and Varadarajan}
The heart of the matter in this paragraph is Proposition \ref{3637}
followed by Theorem \ref{thm315}. We refer the reader to the
 paper of \cite{K-Var} for historical comments and various
applications of Theorem \ref{thm315}.

\subsubsection{Transverse symbol} Let $X$ be a $C^\infty$ manifold of dimension $n$. If
$E$ is a Fr\'{e}chet space, we let $E'$ be its dual space provided
with strong dual topology. We write ${\mathcal Dist}(X; E) : =
C_c^\infty(X;E)'$ for the space of $E$-distributions on $X$. For $r
\in \Z_{\geq 0}$, an $E$-distribution $T$ is said to be of order
$\leq r$ if for every compact set $K \subset X$ there exist a
constant $C>0$, a finite number of elements say $D_1, \dots, D_m \in
{\mathcal D}^{(r)}(X)$ (differential operators of order $\leq r$),
and a continuous semi-norm $\nu$ on $E$ such that for all $f \in
C_c^\infty(K;E)$
$$
|\langle T, f\rangle | \leq C \sum_{1 \leq j \leq m} \sup_{x \in K}
\nu(D_j f(x)).
$$
We write ${\mathcal Dist}^{(r)}(X; E)$ for the space of all such
distributions. Now let $Y$ be a closed $C^\infty$ submanifold
embedded in $X$ and of dimension $q$. Set $p = n-q$. Let ${\mathcal
Dist}_Y(X;E)$ be the collection of $E$-distributions with support in
$Y$. If $x \in Y$, select an open neighborhood $U$ of $x$ and local
coordinates $(t, u) = (t^1, \dots, t^p, u^1, \dots, u^q)$ on $U$
such that $Y \cap U$ is $\{(t, u) ; t^1 = \dots = t^p =0\}$. We say
$T \in {\mathcal Dist}(X; E)$ has transverse order $\leq r$ at $x
\in Y$ if there exits an open neighborhood $U$ of $x$ in $X$ such
that $\langle T, f \rangle =0$ if $f \in C_c^\infty (X; E)$
satisfies $Vf|_{Y \cap U} =0$ for all $V \in {\mathcal D}^{(r)}(U)$.
We let ${\mathcal Dist}_Y^{(r)}(X; E)$ be the linear space of
such distributions.

We will now define a vector bundle $M^{(r)}$, the $r$th graded
subspace of the transverse jet bundle over $Y$. For any $x \in Y$,
we let ${\mathcal O}_x$ be the algebra of germs of $C^\infty$
functions around $x$, and ${\mathcal D}_x^{(r)}$ the ${\mathcal
O}_x$-module of germs of differential operators of order $\leq r$
around $x$. Let $V_x^{(r)}$ be the ${\mathcal O}_x$-submodule of
${\mathcal D}_x^{(r)}$ generated by products of $ \leq r$ of germs
of vector fields around $x$ for which at least one is tangent to
$Y$. Set ${\mathcal I}_x^{(r)} = {\mathcal D}_x^{(r-1)} +
V_x^{(r)}$. It turns out that ${\mathcal I}_x^{(r)}$ is the stalk of
a subsheaf ${\mathcal I}^{(r)}$ of ${\mathcal D}^{(r)}$. Then we set
${\mathcal M}^{(r)} ={\mathcal D}^{(r)} /{\mathcal I}^{(r)}$ with
stalk at $x$ denoted by ${\mathcal M}_x^{(r)}$. We write $\partial
\mapsto \overline{\partial}$ for the projection ${\mathcal
D}_x^{(r)} \to {\mathcal M}_x^{(r)}$. If we have local coordinates
$(t, u)$ as above, then $\overline{\partial}_t^\alpha$ with
$|\alpha| = r$ forms a free basis for the sections of $M^{(r)}$
around $x$. This shows that ${\mathcal M}^{(r)}$ is in fact a vector
bundle on $Y$. We let ${\mathcal M}^{(r)'}$ be the dual bundle. We
observe that ${\mathcal M}^{(r)} \otimes {\mathcal Dist}(Y; E)
\simeq {\mathcal Dist}(Y;{\mathcal M}^{(r)'} \otimes E)$. We can now
define the transverse symbol of an $E$-distribution supported on $Y$
as an ${\mathcal M}^{(r)'} \otimes E$-distribution living on $Y$.
Let $T \in {\mathcal Dist}_Y^{(r)}(X;E)$. Then for each $x \in Y$
there exist a neighborhood $U$ of $x$ and local coordinates $(t, u)$
as above, and uniquely determined distributions $\tau_\alpha \in
{\mathcal Dist}(Y; E)$ for each $\alpha$ with $|\alpha| \leq r$ such
that on $U$
\begin{equation}\label{schwartz}
T = \sum_{|\alpha| \leq r} (-1)^{|\alpha|} \tau_\alpha
\partial_t^\alpha.
\end{equation}
The transverse symbol of $T$ then is $\sigma(T) =\sum_{|\alpha| = r}
(-1)^{|\alpha|} \partial_t^\alpha \otimes \tau_\alpha$. The symbol
is uniquely determined and is independent of local coordinates. Also
the map $\sigma^{(r)}: {\mathcal Dist}_Y^{(r)}(X;E) \to {\mathcal
Dist}(Y; {\mathcal M}^{(r)'} \otimes E)$ is injective modulo
${\mathcal Dist}_Y^{(r-1)}(X;E)$.

\subsubsection{Invariant distributions} We will retain the notations and
conventions of the previous paragraph. Let $H$ be a Lie group of
$C^\infty$ diffeomorphisms of $X$ which leave $Y$ invariant. Assume
that the action of $H$ on $Y$ is transitive. Let $H' \subset H$ be a
closed subgroup, and suppose we are given a differentiable action
$\beta$ of $H'$ on $E$. Then it makes sense to talk about
$E$-distributions on $X$ invariant under $H'$. In applications, such
as those considered in the next paragraph, it is sometimes desirable
to know when there are no invariant distributions under $H'$. Here
we will concentrate on ${\mathcal Dist}^{(r)}(X;E)^{H'}$ for $r \geq
0$ and will study situations where this space is trivial. First
observation is that if $Z \subset X$ is open and is $H'$ invariant,
then if we set $U = Z \cap Y$, the set $U$ is closed in $Z$ and the
map $\sigma^{(r)}$ defined on ${\mathcal Dist}^{(r)}_U(Z;E)^{H'}$ will
have its image in ${\mathcal Dist}(U; {\mathcal M}^{(r)'} \otimes
E)^{H'}$.

Now let $F$ be any $H$-homogeneous $C^\infty$ vector bundle of
finite rank on $Y$, and let the action of $H$ be $\alpha$. Then we
have a natural action of $H'$ on $F \otimes E$. We first describe
the structure of ${\mathcal Dist}(Y; F \otimes E)$. Fix an arbitrary
point $x \in Y$, and let $W_0 = F_x \otimes E$. Let $W$ be the
trivial bundle over $H$ whose fibers are isomorphic to $W_0$. Let
$H_x$ be the stabilizer of $x$. Suppose $U$ is an $H'$-invariant
open set in $Y$, and set $V= \pi^{-1}(U)$ where $\pi: H \to Y$ is
given by $h\mapsto h.x$. We define a structure of $H'$-module on
$\Gamma_c^\infty(V;W)$ be setting
$$
(h'.s)(h) = (id \otimes \beta)(h')s(h'^{-1} h),
$$
$h' \in H'$, $h \in H$, and $s \in \Gamma_c^\infty(V; W)$. Also
define $\delta_x$ be the $\R_{>0}$ valued homomorphism on $H_x$
given by $|\det(Ad|_{{\mathfrak h}_x})|$. Note that
$\Gamma_c^\infty(V;W)$ has the structure of an $H_x$ module via
$$
(R(\xi)s)(h) = (\alpha \otimes id)(\xi)s(h \xi)
$$
for $\xi \in H_x$. Then we have the following theorem:
\begin{thm}[Theorem 3.2 of \cite{K-Var}]
There exists an injective continuous mapping of $H'$-modules
$$
\sharp: {\mathcal Dist}(U; F \otimes E) \to {\mathcal Dist}(V; W)
$$
linear over $C^\infty(Y)$ satisfying $\delta_{H_x}(\xi)^{-1} R(\xi)
\circ \sharp = \sharp$ for $\xi \in H_x$. Also $supp \, (\sharp
\tau) \subset \pi^{-1}(supp \, \tau)$.
\end{thm}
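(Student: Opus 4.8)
The plan is to realise $\sharp$ as the transpose of a fibre--integration map along the submersion $\pi\colon V\to U$. Since $H$ acts transitively on $Y$, the map $\pi\colon H\to Y$, $h\mapsto h\cdot x$, is a locally trivial principal $H_x$-bundle for the right action $h\mapsto h\xi$, $\xi\in H_x$; restricting over the $H'$-invariant open set $U$ gives a surjective submersion $\pi\colon V\to U$ whose fibres are the right $H_x$-cosets and which is $H'$-equivariant for the left translation action of $H'$ on $V$. As usual, pullback along $\pi$ corrected by the moving frame identifies a smooth section of $F\otimes E$ over $U$ with a function $\phi\colon V\to F_x\otimes E=W_0$ satisfying $\phi(h\xi)=(\alpha_x\otimes\mathrm{id}_E)(\xi)^{-1}\phi(h)$, where $\alpha_x$ is the isotropy representation of $H_x$ on the fibre $F_x$; under this identification the geometric $H'$-action on sections over $U$ goes over to left translation on $V$ twisted by $\mathrm{id}_{F_x}\otimes\beta$ on the $E$-factor, which is exactly the module structure placed on $\Gamma_c^\infty(V;W)$ in the statement. (I suppress throughout the fixed smooth positive density on $Y$, and the associated dual-bundle convention, needed to read an $F\otimes E$-valued distribution as a functional on test sections; these do not affect the argument.)

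Next I would construct the push-forward
\[
\Phi\colon\ \Gamma_c^\infty(V;W)=C_c^\infty(V;W_0)\ \longrightarrow\ C_c^\infty(U;F\otimes E),\qquad \Phi(s)(h)=\int_{H_x}(\alpha_x\otimes\mathrm{id}_E)(\xi)\,s(h\xi)\,d\xi,
\]
with $d\xi$ a fixed left Haar measure on $H_x$, and then check, all by routine arguments: (i) for fixed $h$ the integrand is supported on the compact set $\{\xi\in H_x:h\xi\in\mathrm{supp}\,s\}$, so the integral converges and $\Phi(s)$ is smooth with compact support over $U$; (ii) $\Phi(s)(h\eta)=(\alpha_x\otimes\mathrm{id}_E)(\eta)^{-1}\Phi(s)(h)$ by left-invariance of $d\xi$, so $\Phi(s)$ really is a section of $F\otimes E$ over $U$; (iii) $\Phi$ is continuous for the natural topologies and $C^\infty(U)$-linear, since $\pi^*f$ is constant along fibres; (iv) $\Phi$ intertwines the left-translation-with-$\beta$ actions of $H'$ on source and target; (v) $\Phi$ is surjective: given a test section $t$ over $U$, choose by local triviality of $\pi$ and a partition of unity a function $\rho\in C_c^\infty(V)$ with $\int_{H_x}\rho(h\xi)\,d\xi=1$ for every $h$ with $\pi(h)\in\mathrm{supp}\,t$, and set $s=\rho\cdot\widetilde t$ with $\widetilde t$ the equivariant lift of $t$; then $\Phi(s)=t$; and (vi) $\Phi\circ R(\xi)=\delta_{H_x}(\xi)^{\pm1}\Phi$ for $\xi\in H_x$, because the substitution $\zeta=\eta\xi$ replaces $d\eta$ by $\delta_{H_x}(\xi)^{\pm1}d\zeta$, where $\delta_{H_x}=|\det(\mathrm{Ad}|_{\mathfrak{h}_x})|$ is the modular function of $H_x$.

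I would then simply set $\sharp={}^t\Phi$, i.e.\ $\langle\sharp\tau,s\rangle=\langle\tau,\Phi(s)\rangle$. Continuity and $C^\infty(Y)$-linearity of $\sharp$ are the transposes of (iii); $H'$-equivariance of $\sharp$ is the transpose of (iv); injectivity of $\sharp$ is exactly the surjectivity (v) of $\Phi$, since $\langle\tau,\Phi(s)\rangle=0$ for all $s$ forces $\tau=0$; and the relation $\delta_{H_x}(\xi)^{-1}R(\xi)\circ\sharp=\sharp$ is the transpose of (vi), with the power pinned down by tracking the conventions for transporting distributions along $R(\xi)$. For the support statement, if $s\in\Gamma_c^\infty(V;W)$ has support disjoint from $\pi^{-1}(\mathrm{supp}\,\tau)$, then $\mathrm{supp}\,\Phi(s)\subset\pi(\mathrm{supp}\,s)$ is disjoint from $\mathrm{supp}\,\tau$, so $\langle\sharp\tau,s\rangle=0$; hence $\sharp\tau$ vanishes on $V\setminus\pi^{-1}(\mathrm{supp}\,\tau)$, i.e.\ $\mathrm{supp}(\sharp\tau)\subset\pi^{-1}(\mathrm{supp}\,\tau)$.

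The one point that is genuinely more than bookkeeping is item (vi) and its transpose: one must verify that the character by which the image of $\sharp$ transforms under $R$ is precisely $\delta_{H_x}$ --- the modulus of the adjoint action of $H_x$ on $\mathfrak{h}_x$ --- and with the correct power, and not some other character of $H_x$ also involving $\delta_H$ or $\det\alpha_x$. This comes down to comparing the Jacobian of right translation on a left Haar measure of $H_x$ against the conventions used for $R(\xi)$ on distributions and for the frame identification, and it is exactly the normalisation that the statement encodes. Everything else --- convergence, smoothness, the equivariance (ii) making $\Phi$ land in honest sections, continuity, the transpose formalism, surjectivity via fibrewise bump functions, and the support bound --- is standard once the principal-bundle picture for $\pi\colon V\to U$ is in place.
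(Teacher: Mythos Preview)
The paper does not contain a proof of this statement: it is quoted verbatim as Theorem~3.2 of Kolk--Varadarajan \cite{K-Var} in a review subsection, with no argument supplied. So there is nothing in the present paper to compare your proposal against.

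That said, your sketch is the standard and correct construction: realise sections of the homogeneous bundle $F\otimes E$ over $U$ as $H_x$-equivariant $W_0$-valued functions on $V=\pi^{-1}(U)$, build the fibre-integration (averaging over $H_x$) map $\Phi$ from $\Gamma_c^\infty(V;W)$ to $\Gamma_c^\infty(U;F\otimes E)$, and take $\sharp={}^t\Phi$. Injectivity from surjectivity of $\Phi$, $C^\infty(Y)$-linearity, $H'$-equivariance, and the support bound all fall out as you say. Your own caveat is the right one: the only genuinely delicate point is pinning down that the $R(\xi)$-equivariance of $\sharp$ involves exactly $\delta_{H_x}(\xi)^{-1}$ and not some other modular factor; this is a matter of tracking the change of variable $\eta\mapsto\eta\xi$ in a left Haar measure on $H_x$ against the convention for how $R(\xi)$ acts on distributions. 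If you want to certify the sign of the exponent you should write that computation out rather than leave it as ``$\pm1$''. Otherwise the argument is complete and is, as far as one can tell without the original in hand, essentially what Kolk and Varadarajan do.
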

Now assume that $H'$ is normal in $H$, and set $H'_x = H' \cap H_x$.
Define a homomorphism $\chi_x: H_x' \to \R$ by $\chi_x(\xi) =
\delta_{H'}(\xi) / \delta_{H_x}(\xi)$. Let $\C_x$ and $\C_x'$ be one
dimensional $H'_x$-spaces determined by $\chi_x$ and $\chi_x^{-1}$,
respectively. Also for each $h \in H$, set $\beta^h(h') : = \beta(h
h' h^{-1})$. Now let $V= H' V$ be open in $H$, and $\zeta \in
{\mathcal Dist}(V;W)^{H'}$. Write $V$ as a union of open sets $V_M =
H' M$ with $M$ a $C^\infty$ manifold in $H$ which is an open subset
of a fixed closed submanifold.

\begin{prop}[Lemmas 3.6 and 3.7 of \cite{K-Var}]\label{3637} To each $\zeta \in
{\mathcal Dist}(V_M, W)^{H'}$ there corresponds a unique $\omega \in
{\mathcal Dist}(M; W)$ such that for all $s \in \Gamma_c^\infty(V_M;
W)$ we have
$$
\langle \zeta, s \rangle = \langle \omega_m, \int_{H'} (id \otimes
\beta)(h'^{-1})s(h'm)\, d_l h' \rangle.
$$
In particular, $supp \, \zeta = H'. supp \, \omega$. If $\zeta$
satisfies $\delta_{H_x}(\xi)^{-1} R(\xi) \zeta = \zeta$ for all $\xi
\in H_x'$, then $\omega$ satisfies
$$
\langle \omega, w \rangle = \langle \omega_m , \chi_x(\xi) (\alpha
\otimes \beta^m )(\xi) w(m) \rangle,
$$
for all $\xi \in H_x'$ and $w \in \Gamma_c^\infty(M;W)$.
\end{prop}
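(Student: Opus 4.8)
The plan is to realize ${\mathcal Dist}(V_M;W)^{H'}$ as a space of distributions on the transversal $M$ by integration along $H'$-orbits. Since $M$ is an open piece of a closed submanifold of $H$ transverse to the left $H'$-action, we may and do assume (shrinking $M$ if necessary) that the multiplication map $a\colon H'\times M\to V_M$, $(h',m)\mapsto h'm$, is a diffeomorphism onto the open set $V_M=H'M$. First I would introduce the averaging map $P\colon\Gamma_c^\infty(V_M;W)\to\Gamma_c^\infty(M;W)$ defined by $(Ps)(m)=\int_{H'}({\rm id}\otimes\beta)(h'^{-1})s(h'm)\,d_lh'$, and establish its two basic properties: (i) $P$ is surjective — given $w\in\Gamma_c^\infty(M;W)$ one writes down an explicit preimage using a bump function in the $H'$-direction — and (ii) $\ker P$ is exactly the common kernel of all $H'$-invariant $E$-distributions on $V_M$; that is, $P$ is the canonical projection of $\Gamma_c^\infty(V_M;W)$ onto its ($\delta_{H'}$-twisted) $H'$-coinvariants. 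Assertion (ii) is the heart of the first half: it is proved by the usual argument that a compactly supported section all of whose orbital integrals vanish is a finite sum of sections of the form $h'\cdot t-t$ (again via a partition in the $H'$-direction), together with the compatibility under $a$ of the $H'$-module structure on $\Gamma_c^\infty(V_M;W)$ with the one on $\Gamma_c^\infty(H';\Gamma_c^\infty(M;W))$.

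Granting (i) and (ii), every $\zeta\in{\mathcal Dist}(V_M;W)^{H'}$ factors uniquely through $P$, so there is a unique $\omega\in{\mathcal Dist}(M;W)$ with $\langle\zeta,s\rangle=\langle\omega,Ps\rangle$, which is precisely the displayed formula $\langle\zeta,s\rangle=\langle\omega_m,\int_{H'}({\rm id}\otimes\beta)(h'^{-1})s(h'm)\,d_lh'\rangle$; uniqueness of $\omega$ is immediate from the surjectivity of $P$. The support statement $\mathrm{supp}\,\zeta=H'\cdot\mathrm{supp}\,\omega$ then follows because $(Ps)(m)$ depends only on $s|_{H'm}$, and conversely by testing $\zeta$ against sections supported in small $H'$-saturated neighbourhoods of points of $M$.

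For the second assertion I would start from $\zeta$ satisfying $\delta_{H_x}(\xi)^{-1}R(\xi)\zeta=\zeta$ for all $\xi\in H_x'=H'\cap H_x$ and compute $\langle\zeta,R(\xi)s\rangle$ using the formula just obtained, where $(R(\xi)s)(h)=(\alpha\otimes{\rm id})(\xi)s(h\xi)$. Inside the orbital integral one must rewrite the substitution $h'm\mapsto h'm\xi$ in the coordinates $H'\times M$: since $\xi\in H_x$ fixes $x$, right translation by $\xi$ carries each $H'$-orbit through a point of $M$ back to that same orbit under the cross-section identification, and pushing $\xi$ across $m$ replaces the action of $\xi$ on $E$ by the conjugated action $\beta^m(\xi)=\beta(m\xi m^{-1})$ and produces the Jacobian of $h'\mapsto h'\xi$ for left Haar measure on $H'$, namely $\delta_{H'}(\xi)$. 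Assembling the factor $\delta_{H'}(\xi)$ from the measure, $\delta_{H_x}(\xi)^{-1}$ from the hypothesis, $(\alpha\otimes{\rm id})(\xi)$ from $R(\xi)$, and $\beta^m(\xi)$ from moving $\xi$ past $m$, one obtains $\langle\omega,w\rangle=\langle\omega_m,\chi_x(\xi)(\alpha\otimes\beta^m)(\xi)w(m)\rangle$ with $\chi_x(\xi)=\delta_{H'}(\xi)/\delta_{H_x}(\xi)$, as required.

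The step I expect to be the real obstacle is the modular bookkeeping in both halves. In (ii) one must pin down exactly which modulus of $H'$ (left versus right Haar) enters so that an honestly $H'$-invariant distribution corresponds to the untwisted map $P$; and in the second half one must verify that the Jacobians of the right-translation substitutions combined with the ${\rm Ad}$-factors in $\delta_x=|\det({\rm Ad}|_{{\mathfrak h}_{x}})|$ reassemble precisely into $\chi_x$ and $\beta^m$, rather than an inverse or a leftover power of some modulus. None of this is deep, but it is the part where inverses and left/right conventions must be tracked with care, honouring exactly the normalizations of \cite{K-Var} (the definition of $\delta_x$ and the choice of the left-invariant measure $d_lh'$).
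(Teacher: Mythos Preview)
The paper does not give its own proof of this proposition: it is stated purely as a review of Lemmas~3.6 and~3.7 of \cite{K-Var}, so there is nothing in the paper to compare your argument against. Your outline is the standard one and matches what Kolk--Varadarajan actually do: realize $V_M\cong H'\times M$ via multiplication, push $H'$-invariant distributions across the averaging map $P$, and then track the right $H_x'$-action through the coordinates to obtain the $\chi_x(\xi)(\alpha\otimes\beta^m)(\xi)$ relation.
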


Finally we have the following theorem:

\begin{thm}[Theorem 3.15 of \cite{K-Var}]\label{thm315} Assume we are in one of the
following situations:
\begin{enumerate}
\item The representation $\beta$ extends to $H$ (this includes
$H'=H$);
\item $\dim E < \infty$.
\end{enumerate}
Suppose for all $y \in Y, r \geq 0$, we have
\begin{equation}\label{vanishing}
({\mathcal M}_y^{(r)} \otimes E' \otimes \C_y')^{H_y'} = (0).
\end{equation}
Then ${\mathcal Dist}_Y(X;E)^{H'} = (0)$. In fact, it suffices to
assume the validity of \eqref{vanishing} for one element of every
$H'$-orbit in $Y$.
\end{thm}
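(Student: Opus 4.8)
The plan is to argue by contradiction, peeling off the leading transverse symbol of a putative nonzero invariant distribution so as to replace it by an invariant section‑distribution on the homogeneous space $Y=H/H_x$, then transporting this to the group $H$ via the $\sharp$‑map of Theorem 3.2 of \cite{K-Var} and the structure result Proposition~\ref{3637}, at which point the whole problem collapses onto the representation‑theoretic condition \eqref{vanishing}. So suppose $T\in{\mathcal Dist}_Y(X;E)^{H'}$ is nonzero. For each $r\ge 0$ the set $Z_r$ of points at which $T$ has transverse order $\le r$ is open and, since transverse order is a diffeomorphism invariant and $h'_*T=T$ for $h'\in H'$, also $H'$‑invariant; these sets exhaust $X$ because $T$ has locally finite transverse order. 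Let $r\ge 0$ be minimal with $Z_r\neq\varnothing$. On $Z_r$ the restriction $T|_{Z_r}$ lies in ${\mathcal Dist}^{(r)}_{U}(Z_r;E)^{H'}$ with $U=Z_r\cap Y$ open and $H'$‑invariant in $Y$, and by minimality it has transverse order $\le r-1$ on no nonempty open subset; since $\sigma^{(r)}$ is injective modulo ${\mathcal Dist}_Y^{(r-1)}$, this forces $\tau:=\sigma^{(r)}(T|_{Z_r})$ to be a \emph{nonzero} element of ${\mathcal Dist}(U;{\mathcal M}^{(r)'}\otimes E)^{H'}$. Thus it is enough to show that \eqref{vanishing} implies ${\mathcal Dist}(U;F\otimes E)^{H'}=(0)$ for every $H'$‑invariant open $U\subseteq Y$ and every finite‑rank $H$‑homogeneous bundle $F$ on $Y$ (here $F={\mathcal M}^{(r)'}$).

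To prove this, transport $\tau$ to the group. The injective $C^\infty(Y)$‑linear map $\sharp$ of Theorem 3.2 of \cite{K-Var} sends $\tau$ to a nonzero $\zeta:=\sharp\tau\in{\mathcal Dist}(V;W)^{H'}$, where $V=\pi^{-1}(U)\subseteq H$ is left $H'$‑invariant and open, $W$ is the trivial bundle with fibre $W_0=F_x\otimes E$, and $\zeta$ satisfies $\delta_{H_x}(\xi)^{-1}R(\xi)\zeta=\zeta$ for all $\xi\in H_x$ — in particular for all $\xi\in H'_x=H'\cap H_x$. Writing $V$ as a union of open sets $V_M=H'M$ and choosing $M$ so that $\zeta|_{V_M}\neq 0$, Proposition~\ref{3637} produces a unique nonzero $\omega\in{\mathcal Dist}(M;W)$ with $\operatorname{supp}\zeta=H'\cdot\operatorname{supp}\omega$ and, from the $H'_x$‑covariance of $\zeta$, the pointwise transformation law
\[
\langle\omega,w\rangle=\langle\omega_m,\ \chi_x(\xi)\,(\alpha\otimes\beta^m)(\xi)\,w(m)\rangle,\qquad \xi\in H'_x,\ \ w\in\Gamma_c^\infty(M;W),
\]
where $\chi_x(\xi)=\delta_{H'}(\xi)/\delta_{H_x}(\xi)$.

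The endgame is local on $M$. Fix $m_0\in\operatorname{supp}\omega$. Because the displayed law is \emph{pointwise} in the integration variable $m$ — it involves no translation on $M$ — reading off the leading (lowest‑order) coefficient of $\omega$ at $m_0$ yields a nonzero vector $v_0\in W_0=F_x\otimes E$ with $v_0=\chi_x(\xi)\,(\alpha\otimes\beta^{m_0})(\xi)\,v_0$ for every $\xi\in H'_x$; equivalently, $v_0$ is a nonzero $H'_x$‑invariant vector in $F_x\otimes E$ for the $\chi_x$‑twisted action. Recalling $F={\mathcal M}^{(r)'}$ and that $\mathbb{C}'_x$ is the one‑dimensional $H'_x$‑module attached to $\chi_x^{-1}$, a duality turns this into a nonzero element of $({\mathcal M}_x^{(r)}\otimes E'\otimes\mathbb{C}'_x)^{H'_x}$, which contradicts \eqref{vanishing} at the single point $x\in Y$ and completes the argument. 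For the final, sharper assertion — that one need only impose \eqref{vanishing} at one point of each $H'$‑orbit — observe that since $H$ is transitive on $Y$ and $H'\trianglelefteq H$, conjugation by an element of $H$ carries $({\mathcal M}_y^{(r)}\otimes E'\otimes\mathbb{C}'_y)^{H'_y}$ isomorphically onto $({\mathcal M}_{y'}^{(r)}\otimes E'\otimes\mathbb{C}'_{y'})^{H'_{y'}}$ whenever $y,y'$ lie in the same $H'$‑orbit, so vanishing at one point of an orbit already gives it at all of them.

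The step I expect to be the main obstacle is this final paragraph, on two counts. First, one must check that pushing the covariance $\delta_{H_x}(\xi)^{-1}R(\xi)\zeta=\zeta$ through the correspondence $\zeta\leftrightarrow\omega$ of Proposition~\ref{3637} produces \emph{exactly} the twist $\chi_x=\delta_{H'}/\delta_{H_x}$ on $H'_x$, so that the module that appears is precisely $\mathbb{C}'_x$ and not some other combination of the modular functions $\delta_{H_x}$, $\delta_{H'}$; this is where the normality of $H'$ in $H$ and the precise definitions of $R$ and of the $\delta$'s enter. Second, "read off the leading coefficient and dualize" is immediate when $\dim E<\infty$, but when $E$ is an infinite‑dimensional Fr\'echet space it must instead be run using the hypothesis that $\beta$ extends to $H$ — so that $W$ becomes a genuine $H$‑homogeneous bundle and the covariance analysis on the slices $V_M$ is insensitive to $\beta$'s failure to extend — which is exactly the dichotomy built into the hypotheses. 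Everything else is the formal apparatus already assembled in the preceding subsections.
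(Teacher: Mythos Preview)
The paper does not prove this statement. Theorem~\ref{thm315} is quoted, without proof or sketch, as Theorem~3.15 of \cite{K-Var} in the subsection reviewing Kolk--Varadarajan's results; the present paper merely invokes it as a black box in Step~1 of the proof of Theorem~\ref{bessel-discrete}. So there is no proof here against which to compare your proposal.

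For what it is worth, your outline tracks what one expects the Kolk--Varadarajan argument to be: peel off the top transverse symbol to land in ${\mathcal Dist}(U;{\mathcal M}^{(r)'}\otimes E)^{H'}$, push through $\sharp$ to a $W_0$-valued distribution on the group, slice via Proposition~\ref{3637}, and read off an $H'_x$-invariant vector contradicting \eqref{vanishing}. The two delicate points you single out --- the bookkeeping producing exactly $\chi_x=\delta_{H'}/\delta_{H_x}$, and the role of hypothesis (1) versus (2) in controlling the $m$-dependence of $\beta^m$ --- are precisely where the work lies. One quibble of phrasing: ``reading off the leading coefficient of $\omega$ at $m_0$'' is not literally available for a general distribution; the cleaner formulation is that the pointwise covariance forces $\omega$ to annihilate test sections valued in the image of each $\mathrm{id}-\chi_x(\xi)(\alpha\otimes\beta^m)(\xi)$, hence to factor through the fibrewise $H'_x$-coinvariants, which vanish by the hypothesis (dualized). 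In case (2) this is immediate since the fibre is finite-dimensional; in case (1) the extension of $\beta$ to $H$ makes the coinvariant bundle genuinely $H$-homogeneous and independent of the slice point up to isomorphism. But this is wording, not a gap in your strategy.
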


\subsection{Discrete series for $\GSp(4, \R)$ and inner forms}

First we describe
the discrete series representations of $\GL(2, \R)$. Let $\eta =
|.|^s sgn^\epsilon$, $\epsilon =0, 1$, be any quasi-character of
$\R^\times$. Then for any positive integer $k$, we have the
following exact sequence of representations
$$ 0 \longrightarrow \delta(\eta, k) \longrightarrow \eta |.|^{k/2}sgn^{k+1} \times \eta |.|^{-k/2}
\longrightarrow \zeta(\eta, k) \longrightarrow 0 .$$

The representation $\zeta(\eta, k)$ is finite dimensional of
dimension $k$. The representation $\delta(\eta, k)$ is essentially
square-integrable, and it is discrete series if $\eta$ is unitary.

We now deal with the group $\Sp(4, \R)$. For every pair of integers
$(p, t)$ with $p > t > 0$ one has a collection of four discrete
series representations of $\Sp(4, \R)$ with the same infinitesimal 
character. We will denote these by
$X(p, t), X(p, -t), X(t, -p), X(-t, -p)$. These representations can
be obtained from the Siegel parabolic subgroup with $\GL_2(\R)$ as
the Levi subgroup
$$
\zeta(|.|^{-\frac{p+t}{2}}sgn^p, p-t)\rtimes 1 \twoheadrightarrow
X(p, t) \oplus X(-t, -p)
$$
and
$$
\delta(|.|^{\frac{t-p}{2}}sgn^p, p+t)\rtimes 1 \twoheadrightarrow
X(p, -t) \oplus X(t, -p).
$$
An essential point for us is the fact that the kernel of the
first map is a finite dimensional representation 
of $\Sp_4({\Bbb R})$, and therefore for calculation of 
Bessel models, there is no difference between the 
principal series and the sum $X(p, t) \oplus X(-t, -p)$.
The representations $X(p, -t)$ and $X(t, -p)$ are generic. 
For $\GSp(4,
\R)$, essentially square-integrable representations, that is
those irreducible representations $\Pi$ whose restriction to $\Sp(4,
\R)$ contains a discrete series representation, will be called a 
discrete series 
representation of $\GSp_4({\Bbb R})$ (thus without
requiring the central character to be unitary).

The group $\GSp_4({\Bbb R})$ contains ${\Bbb R}^*\cdot Sp_4({\Bbb R})$ 
as a subgroup of index 2, and every discrete series representation
of $\GSp_4({\Bbb R})$ is obtained by inducing 
a discrete series representation of ${\Bbb R}^* \cdot Sp_4({\Bbb R})$ 
which thus can be parametrized as $X(p,t;\xi)$ with $\xi$ a character
of ${\Bbb R}^*$ such that $\xi|_{\pm 1}$ is the central
character of the representation $X(p,t)$ of $Sp_4({\Bbb R})$. The action
of $\GSp_4({\Bbb R})$ 
on $\Sp_4({\Bbb R})$ interchanges $X(p,t)$ with
$X(-t,-p)$, and $X(p,-t)$ with $X(t,-p)$.   

Given $(p,t)$ with $p>t>0$, and a character $\xi: {\Bbb R}^*\rightarrow
{\Bbb C}^*$, let $\Pi_1$ be the generic representation of
$\GSp_4({\Bbb R})$ with central character $\xi$, and let $\Pi_2$
be the other discrete series representation of $\GSp_4({\Bbb R})$ 
with the same infinitesimal character. Let $\Pi_3$ be the unique
discrete series representation of $\GSp_{\Bbb H}(4)$ with the 
same infinitesimal and central character.

\subsection{The result}

For a given representation $\pi$, the Bessel functional is
 a continuous linear functional on the space of smooth vectors
$V_\pi^\infty$ in $V_\pi$ which comes equipped with its Fr\'{e}chet
topology satisfying appropriate invariance equations with respect to
the Bessel subgroup. Explicitly, let $\chi$ be a character of
$\C^\times$ given by $\chi(re^{i\theta}) = \chi_1(r)e^{in\theta}$,
for some quasi-character $\chi_1$ of $\R_+^\times$. Given $n$ and
$\chi$ as above, we set $n(\chi) = n$. We identify $\C^\times$ with
a subgroup of $\GL_2(\R)$, and $D^\times$, by sending $z= a+ib
\mapsto t(z) : =
\begin{pmatrix} a& b \\ -b & a \end{pmatrix}$. Define a subgroup $R$
of $\GSp_4(\R)$ by setting
\begin{equation*}
R = \left\{ b(z; r, s, t): = \begin{pmatrix} t(z) \\ & t(z)
\end{pmatrix}
\begin{pmatrix} 1& & s & r \\ & 1 & r & t \\ &&1 \\ &&&1 \end{pmatrix};
r, s, t \in \R, z \in \C^\times \right\}.
\end{equation*}

We now
define a character $\chi_R$ of $R$ by setting
$$
\chi_R(b(z; r, s, t)) = \chi(z)e^{2 \pi i (s+t)}.
$$
There is a closely related subgroup $R_D$  of $\GSp_D(4)$. 
One defines a similar character of $R_D$, again denoted by
$\chi_R$. We say a continuous functional $\lambda$ on $V_\pi^\infty$
is a $\chi$-Bessel functional if it satisfies
$$
\lambda(\pi(r)v) = \chi_R(r)\lambda(v),
$$
for all $v \in V_\pi^\infty$ and $r \in R$. We define $\chi$-Bessel
functionals for representations of $\GSp_D(4)$ similarly.

In the following theorem we are interested in the existence of
Bessel functionals for the representations $\Pi_i$.

\begin{thm}\label{bessel-discrete}
Let $\chi$ be a character of $\C^\times$ as above, and let $\{\Pi_1,
\Pi_2, \Pi_3 \}$ be a Vogan packet associated with a pair of
integers $(p,t)$ subject to $p > t > 0$, in such a way that
$\chi|_{\R^\times}$ is the same as the central character of $\Pi_1$.
Then exactly one of the representations $\Pi_i$, $1 \leq i \leq 3$,
has a $\chi$-Bessel model. More precisely
\begin{enumerate}
\item $\Pi_1$ has the model if and only if $|n(\chi)| > p + t$;
\item $\Pi_2$ has the model if and only if $|n(\chi)| \leq p-t$; and
\item $\Pi_3$ has the model if and only if $p-t < |n(\chi)| \leq
p+t$.
\end{enumerate}
In each case the space of the functionals is one dimensional.
\end{thm}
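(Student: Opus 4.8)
The plan is to handle the two representations $\Pi_1,\Pi_2$ of $\GSp_4(\R)$ together, by realizing them inside principal series induced from the Siegel parabolic and reducing the $\chi$-Bessel problem to a branching problem for the Siegel Levi, and to handle the representation $\Pi_3$ of $\GSp_D(4)$ ($D=\mathbb{H}$) separately — as a discrete series of a group of real rank one it is not a subquotient of a proper principal series, and for it one invokes Wallach's results directly. For $\Pi_2$ one uses the surjection $\zeta(|\cdot|^{-(p+t)/2}\mathrm{sgn}^p,p-t)\rtimes 1\twoheadrightarrow X(p,t)\oplus X(-t,-p)$, twisted by $\xi$ and induced up to $\GSp_4(\R)$, and for the generic $\Pi_1$ the surjection $\delta(|\cdot|^{(t-p)/2}\mathrm{sgn}^p,p+t)\rtimes 1\twoheadrightarrow X(p,-t)\oplus X(t,-p)$. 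Since $\chi_R$ is non-degenerate on $N$, no finite-dimensional subquotient — in particular the finite-dimensional kernel of the first surjection, as noted above — carries a $\chi$-Bessel functional, so $\Pi_i$ and the ambient principal series have the same $\chi$-Bessel functional space, which by the archimedean case of Theorem \ref{thm1.1} is at most one-dimensional.

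For $\Pi_1$ and $\Pi_2$ I would then argue as in the $p$-adic Proposition \ref{prop2.1}: a $\chi$-Bessel functional on $\mathrm{Ind}_P^{\GSp_4(\R)}(\rho\otimes 1)$ is a $\rho'$-valued, $\chi_R$-equivariant distribution on $X=\GSp_4(\R)/P$, and on the open $R$-orbit it is nothing but a functional on the restriction of the $\GL_2(\R)$-datum $\rho$ to $\C^\times=\GSO(2)$ transforming by $\chi$; moreover the central-character constraint $\chi|_{\R^\times}=\omega_{\Pi_1}$ pins down $\chi_1$ and the parity of $n(\chi)$. For $\Pi_2$ the datum $\zeta(\eta,p-t)$ is the $(p-t)$-dimensional representation of $\GL_2(\R)$, whose restriction to $\GSO(2)$ contains $\chi$ exactly when $|n(\chi)|\le p-t$; realizing $\Pi_2$ on vector-valued holomorphic functions on the Siegel upper half space and evaluating the attendant archimedean Bessel integral — essentially the rank two computation of \cite{Takloo-Bighash}, in the spirit of \cite{Furusawa} — produces a nonzero functional throughout that range, giving (ii). For $\Pi_1$ the datum $\delta(\eta,p+t)$ is an (essentially) square-integrable representation of $\GL_2(\R)$, whose restriction to $\GSO(2)$ contains $\chi$ exactly when $|n(\chi)|>p+t$, and again \cite{Takloo-Bighash} supplies the functional (for the generic $\Pi_1$ one may also route it through the Whittaker model), giving (i). For $\Pi_3$, since $\GSp_D(4)$ has real rank one and the Bessel quotient $R_D\backslash\GSp_D(4)$ is small, Wallach's results on generalized Whittaker and Bessel functionals for discrete series apply and yield, in both directions, the intermediate range $p-t<|n(\chi)|\le p+t$ of (iii); these three ranges are complementary among the integers $n(\chi)$ of the admissible parity, so at most one $\Pi_i$ can carry the functional.

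It remains, for $\Pi_1$ and $\Pi_2$, to prove that the functional space vanishes when $\chi$ lies outside the range just found; this is where Kolk and Varadarajan's vanishing theorem enters. For such $\chi$ the open-orbit analysis already produces nothing (the relevant character does not occur in $\rho|_{\GSO(2)}$), so a putative functional is a $\rho'$-valued distribution on $X$ supported on the closed complement $Y$ — the degenerate locus. Absorbing $\chi_R|_N=\psi$ into the coefficient module one obtains a Fr\'echet $R$-module on which the $N$-action extends to all of $R$ (because $\GSO(2)$ fixes $\psi$), so Theorem \ref{thm315} is applicable even though the coefficient space is infinite-dimensional. I would apply it with $H=R$, $H'=N$ the (normal) Siegel unipotent radical, and $Y$ as above, and verify the hypothesis \eqref{vanishing}, namely $(\mathcal M_y^{(r)}\otimes E'\otimes\C_y')^{H_y'}=(0)$ for every $r\ge 0$ at one point $y$ of each $N$-orbit in $Y$. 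Since $N$ acts trivially on $\rho$ itself, this reduces to the vanishing of the $(H_y',\psi)$-equivariant part of $\mathcal M_y^{(r)}\otimes\C_y'$, that is, to the statement that even allowing transverse derivatives along $Y$ the non-degenerate character $\psi$ remains non-trivial on $N\cap H_y$ — the distributional strengthening of the elementary observation used in the $p$-adic case. Granting this, the functional space equals the open-orbit contribution and hence vanishes outside the range, establishing necessity in (i) and (ii); combined with the second paragraph this proves the theorem, one-dimensionality in the non-vanishing case being the archimedean case of Theorem \ref{thm1.1}.

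The step I expect to be the real obstacle is this Kolk--Varadarajan verification of \eqref{vanishing}: one must fix the $R$-orbit stratification of the Siegel flag variety, identify each isotropy $H_y'$ and its action on all the transverse jet spaces $\mathcal M_y^{(r)}$, and rule out transverse derivatives resurrecting a non-degenerate character of $N$ on the degenerate orbits — for all $r$ at once. A subsidiary but necessary point is to reconcile the parametrizations $X(p,\pm t),X(\mp t,-p)$ with the $\GL_2(\R)$ data $\delta(\eta,k),\zeta(\eta,k)$ so that the open-orbit branching gives the cut-offs exactly as $p-t$ and $p+t$, to confirm that the rank two construction of \cite{Takloo-Bighash} is available for the generic $\Pi_1$ and not merely for the holomorphic $\Pi_2$, and to check that Wallach's criterion specializes for $\GSp_D(4)$ to precisely the annulus in (iii).
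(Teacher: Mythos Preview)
Your broad architecture matches the paper's: Wallach's results for the rank-one $\Pi_3$, the constructions of \cite{Takloo-Bighash} for existence in rank two, and Kolk--Varadarajan for necessity in rank two. One simplification you miss: the paper disposes of $\Pi_2$ entirely by Wallach \cite{Wallach3}, since holomorphic discrete series are quotients of principal series induced from \emph{finite-dimensional} data, putting them in the scope of that paper; so Kolk--Varadarajan is invoked only for the generic $\Pi_1$. (Your route for $\Pi_2$ would also work, for the same reason --- hypothesis~(2) of Theorem~\ref{thm315} holds --- but it is longer.) Uniqueness, incidentally, is obtained in the paper from the same distributional analysis (their Step~4), not by appeal to an archimedean Theorem~\ref{thm1.1}.

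The genuine gap is in your proposed Kolk--Varadarajan setup for $\Pi_1$. You take $X=G/P$, $H=R$, $H'=N$, and assert that Theorem~\ref{thm315} applies via hypothesis~(1) because ``the $N$-action extends to all of $R$''. But hypothesis~(1) concerns the action $\beta$ of $H'$ on the \emph{coefficient space} $E$, which here carries the infinite-dimensional $\GL_2(\R)$-module $\delta(\eta,p+t)$; there is no natural $R$-action on that. What extends from $N$ to $R$ is the equivariance character $\chi_R$, a different object entirely. In addition, Theorem~\ref{thm315} requires $H$ to act transitively on the closed support $Y$, and the degenerate locus in $G/P$ is not a single $R$-orbit, so you cannot apply the theorem in one stroke.

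The paper's resolution is different in both framing and substance. It works on $X=G$ (so the bundle is trivialized and one has honest $V_\pi$-valued distributions), uses the Bruhat stratification $G=P\cup Pw_1P\cup Pw_2P$, and acts by $P\times N$ (left by $P$, right by $N\subset R$). On the closed cell $P$, Theorem~\ref{thm315} applies and kills any supported distribution. On the middle cell $Pw_1P$, the paper \emph{explicitly} records that Theorem~\ref{thm315} fails for three reasons --- $P\times N$ is not transitive on $Pw_1P$; $\dim V_\pi=\infty$; and $\beta$ does not extend to $P\times P$ --- and instead invokes Proposition~\ref{3637} directly: one shows that the associated distribution $\omega$ on the transversal $M$ vanishes by producing, for each compact $K\subset M$, an element $\xi\in H'_{w_1}$ with $L_\xi(m)=\mathrm{id}-(\alpha\otimes\beta^m)(\xi)$ invertible for all $m\in K$ (a short explicit computation with the non-degenerate $\psi$). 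Only then does one reach the open cell $Pw_2P$, where the functional is seen to come from a $\chi$-Waldspurger functional on the $\GL_2$ datum $\delta(\eta,p+t)$, giving the cutoff $|n(\chi)|>p+t$. You correctly anticipated that this step is the obstacle; what you should revise is the expectation that Theorem~\ref{thm315} alone suffices --- the detour through Proposition~\ref{3637} with the invertibility argument is the missing idea.
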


A few remarks are in order. The theorem is of course the
Gross-Prasad conjecture for discrete series representations of 
$\GSp_4({\Bbb R})$ though we will not check the condition on local 
epsilon factors. We
observe that the parity of $n(\chi)$ is opposite that of $p+t$ and
$p-t$. Theorem \ref{bessel-discrete} completes the work
\cite{Takloo-Bighash}. 
We recall that  \cite{Takloo-Bighash} proved the existence  of 
Bessel functionals using global theta correspondence. It may be desirable to
find a direct local proof of the existence theorem as in Wallach's
paper \cite{Wallach3}. The paper \cite{Wallach3} applies directly
to  the case
of holomorphic representations of $\GSp_4({\Bbb R})$, or discrete
series representations of $\GSp_D(4)$, which are induced from finite 
dimensional representations of Levi subgroups (corresponding to the Siegel
parabolic), and yields the desired result in these cases. (As already
noted for $\GSp_4({\Bbb R})$, these discrete series representations
can be obtained as quotients of principal series representations for which
the kernel is a finite dimensional representation, so does not matter for
calculation of the Bessel models.)

\begin{proof}[{\bf Proof of Theorem \ref{bessel-discrete}}]
In the rank one situation the results follow from Wallach's results.
The same is true of holomorphic discrete series representations as
such representations are quotients of inductions from finite
dimensional representations. It remains to deal with the generic
discrete series representations. The sufficiency of the condition is
one the main results of \cite{Takloo-Bighash}. We just need to
verify the necessity of the conditions. Here as in \cite{Wallach},
we will use the results of Kolk and Varadarajan. Our result will
follow from the following claim:
\begin{claim} Suppose the $\Pi$ is a
quotient of the ${\rm Ind}(\pi | P, G)$ 
with $\pi$ an irreducible
representation of $\GL(2)$. Then if $\Pi$ has a $\chi$-Bessel
functional,  there is a continuous functional
$\lambda$ on $V_\pi^\infty$ satisfying $\lambda(\pi(t(z))v = \chi(z)
\lambda(v)$ for all $v \in V_\pi^\infty$, $z \in \C^\times$; such 
linear forms will be called Waldspurger functional.
\end{claim}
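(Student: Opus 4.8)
The plan is to pull the Bessel functional back to the induced representation and then run an archimedean Bruhat analysis on $P\backslash G$, with $G=\GSp_4(\R)$ and $P$ the Siegel parabolic, invoking the Kolk--Varadarajan criterion (Theorem~\ref{thm315}) to show that only the open cell supports an equivariant functional. Concretely, since $\Pi$ is a quotient of $I:=\mathrm{Ind}_P^G(\pi\rtimes 1)$, composition with the $G$-equivariant surjection $I^\infty\twoheadrightarrow V_\Pi^\infty$ sends a $\chi$-Bessel functional on $\Pi$ to a nonzero continuous $\lambda\in\Hom_R(I^\infty,\chi_R)$. Realizing $I^\infty$ as the space of smooth sections over $X:=P\backslash G$ of the $G$-homogeneous bundle whose fibre over the base point $x_0$ is $V_\pi^\infty$ (with $M\cong\GL_2(\R)\times\R^\times$ acting by $\delta_P^{1/2}\cdot(\pi\boxtimes\mathrm{triv})$ and $N$ acting trivially), the functional $\lambda$ becomes an $(R,\chi_R)$-equivariant, coefficient-valued distribution on $X$. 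Since $R=TN\subset P$, with $T\cong\C^\times$ the nonsplit torus of $\GL_2(\R)$ attached to $\psi$, the $R$-orbits on $X$ refine the three cells of $P\backslash G/P$ found in \S2.1: the closed point $\{x_0\}$, an intermediate orbit, and the open dense orbit $O\cong R/(R\cap M)$, on which $R\cap M=T$ and $N\cap M=1$ (and, one checks, $T$ acts on $N$ by modulus one, so $R/T$ carries an invariant measure).

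The heart of the argument is to show that no nonzero $(R,\chi_R)$-equivariant distribution is supported on the closed complement $Z:=X\setminus O$. I would apply Theorem~\ref{thm315} (via Proposition~\ref{3637}) orbit by orbit, first on $X$ and then on $X$ minus the closed point, taking $H=H'=R$ acting transitively on each orbit $Y$ --- so that the coefficient representation trivially extends to $H$ and the theorem applies even though the coefficient module is a priori infinite dimensional. For a point $y\in Z$ the isotropy $R_y$ contains a nontrivial subgroup $N_y=N\cap R_y$ of the unipotent radical on which $\psi$ is nontrivial: this is exactly the computation of \S2.1, case (iii) (and the analogous one at $x_0$), which showed that a character of $N$ trivial on such a subgroup cannot be nondegenerate. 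On the other hand $N$ is unipotent, hence $N_y$ acts unipotently on $T_yX$ and on every transverse jet space $\mathcal{M}_y^{(r)}$, trivially on the coefficient fibre, and trivially on the modular character $\C_y'$; therefore the space $(\mathcal{M}_y^{(r)}\otimes E'\otimes\C_y')^{R_y}$ occurring in the vanishing hypothesis \eqref{vanishing} sits inside the $N_y$-invariants of a unipotent $N_y$-module twisted by the nontrivial character $\psi^{\pm 1}$, which is $0$. Theorem~\ref{thm315} then yields $\mathcal{Dist}_Z(X;E)^R=0$.

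It follows that $\lambda$ has nonzero restriction to the open orbit $O\cong R/T$, and Frobenius reciprocity on this homogeneous space --- where $\delta_P$ is trivial on $T$, $N$ acts trivially on the fibre, and $R/T$ is unimodular, so no extra modulus intervenes --- identifies that restriction with a continuous functional $\lambda$ on $V_\pi^\infty$ satisfying $\lambda(\pi(t(z))v)=\chi_R(t(z))\lambda(v)=\chi(z)\lambda(v)$ for all $z\in\C^\times$, that is, a Waldspurger functional, which proves the claim. I expect the genuine obstacle to be the vanishing step: identifying the isotropy groups $R_y$ on $Z$ precisely, checking \eqref{vanishing} at one representative of each orbit, and carrying out the bookkeeping needed to apply the Kolk--Varadarajan machinery with an infinite-dimensional coefficient module (which forces the use of the $H'=H$ case). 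Everything ultimately rests on the dichotomy that $N$ is unipotent --- hence carries no nontrivial characters on jet spaces or modular characters --- while $\psi$ is nondegenerate, hence nontrivial on $N\cap R_y$.
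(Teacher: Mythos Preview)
Your strategy---pull the Bessel functional back to the induced representation, run a Bruhat-type stratification of $P\backslash G$, and invoke the Kolk--Varadarajan machinery to kill contributions from the lower strata---is exactly the paper's. The difference is in how the intermediate cell is handled, and that is where your argument has a genuine gap.

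At a point $y=Pw_1$ on the intermediate stratum, the subgroup $N_y=N\cap R_y$ does \emph{not} act trivially on the coefficient fibre. The fibre action of $n\in N_y$ is through $\pi(w_1nw_1^{-1})$, and the paper's explicit matrix computation shows that for
\[
n=\begin{pmatrix}1&&s&-r\\&1&-r&0\\&&1\\&&&1\end{pmatrix},\qquad
w_1nw_1^{-1}=\begin{pmatrix}1&r&s&0\\&1&0&0\\&&1\\&&-r&1\end{pmatrix},
\]
whose Levi part is $\bigl(\begin{smallmatrix}1&r\\0&1\end{smallmatrix}\bigr)\in\GL_2(\R)$; for $r\neq 0$ this acts nontrivially on $V_\pi^\infty$. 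So your assertion ``$N_y$ acts \dots\ trivially on the coefficient fibre'' fails here, and with it the clean ``unipotent module twisted by a nontrivial character has no invariants'' step. (On an infinite-dimensional $V_\pi'$ an action through unipotent elements of $\GL_2$ is not ``unipotent'' in any usable sense.)

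The paper confronts this directly: it notes that Theorem~\ref{thm315} does not apply to the intermediate cell (with $H=P\times P$, $H'=P\times N$: the action of $H'$ is not transitive, $\dim V_\pi=\infty$, and $\beta$ does not extend to $H$), and instead applies Proposition~\ref{3637}. The key move is to set $r=0$, which forces $w_1nw_1^{-1}\in N$ so that the fibre action really is trivial, while the character $\psi$ remains nontrivial (it sees $s$); one then shows the operator $L_\xi(m)=\mathrm{id}-(\alpha\otimes\beta^m)(\xi)$ is invertible for $m$ in any compact subset of the Levi, which kills the transversal distribution $\omega$. Your proposal to take $H=H'=R$ on $P\backslash G$ would formally put you in case~(1) of Theorem~\ref{thm315}, but you would then need $R$ to act transitively on the intermediate cell over $\R$ (which you have not verified---and the paper's use of the whole Levi $L$ as a transversal for $P\times N$-orbits suggests caution) and you would still need a correct check of the vanishing hypothesis there. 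The paper's use of Proposition~\ref{3637} with the explicit choice $r=0$ is precisely what makes this step go through.
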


Suppose
$\pi$ acts on a space $V_\pi$. 
By the definition of an induced representation, a Bessel functional 
on  ${\rm Ind}(\pi | P, G)$ defines a distribution $T$ on the space
of $V_\pi$ valued Schwartz functions on $G = \GSp_4(\R)$ satisfying
\begin{enumerate}
\item $T(L_p F) = T(\pi(p)^{-1}F)$, for $p \in P$
\item $T(R_r F) = \theta(n)\chi_n(t) T(F)$, for $r=nt \in R$.
\end{enumerate}
Consider the Bruhat decomposition of $G$ as $P \times P$ double
cosets written as
$$
G= P \cup Pw_1 P \cup P w_2 P,
$$
with $P w_2 P$ the unique open cell. The element $w_1$ can be
represented by the following matrix
$$
w_1 = \begin{pmatrix} 1 \\ & & & 1 \\ && 1 \\ & -1 \end{pmatrix}.
$$

\

The idea here is to show first that $T$ restricted to the open cell
is non-zero.

\

\noindent {\em Step 1.} First step is to show that $T$ restricted to
the open set $Pw_1 P \cup P w_2 P$ is non-zero. If it were zero,
then $T$ would be supported on $P$. We will show that there are no
distributions supported on $P$ satisfying the invariance properties.
In fact we do not need the entire group $P \times R$; $P \times N$
is sufficient. We note that $P \times N$ acts transitively on $P$,
and we can use the Vanishing Theorem \ref{thm315}. In the notation
of that theorem set $x = e$. Hence $H_e' = \{(n, n) |n \in N\}$. A
nice transversal Lie algebra for $P$ at $e$ can be taken to be $L_0
= Lie(\overline{N})$. It is then not hard to see that $H_e'$ acts
trivially on $M_e^{(r)'}$. Also the character $\chi_e$ is trivial.
It is then clear that as the character $\theta$ is non-trivial, we
get
$$
({\mathcal M}_e^{(r)'} \otimes V_\pi' \otimes \C)^{H_e'} = (0).
$$
The vanishing theorem now gives us the result.

\

\noindent {\em Step 2.} We now consider the restriction of $T$ to
the open set $Pw_1 P \cup P w_2 P$. We would like to show that the
restriction of $T$ to $Pw_2 P$ is non-zero. We show that there are
no distributions supported on $Pw_1P$ satisfying the invariance
properties. Here too we just need to use $P \times N$, but
unfortunately the Vanishing Theorem \ref{thm315} does not apply
directly as the action of $P \times N$ is not transitive on $Pw_1P$;
$\pi$ is not finite dimensional; and the action of $P \times N$ does
not extend to $P \times P$. We can however use Proposition
\ref{3637}. Here $H = P \times P$, and $H' = P \times N$. We let
$x=w_1$. A nice transversal Lie algebra for $Pw_1 P = P w_1 ( P \cap
w_1^{-1} \overline{N} w_1)$ at $w_1$ will be $L_0 = Lie(\overline{N}
\cap w_1^{-1} \overline{N} w)$. Then $H_{w_1}= \{ (p, w_1^{-1} p
w_1) | p \in P \cap w_1 P w_1^{-1} \}$. Then action of an element of
$H_{w_1}$, say $(p, w_1^{-1} p w_1)$, on the Lie algebra of $G$ is
given by $Ad(w_1^{-1} p w_1)$. Then $H_{w_1}' = \{ (p, w_1^{-1} p
w_1) | p \in P \cap w_1 N w_1^{-1} \}$. In coordinates
$$
H_{w_1}' = \left\{\left( \begin{pmatrix} 1& r & s& 0 \\ & 1 & 0 & 0 \\
&& 1 \\ && - r & 1 \end{pmatrix},
\begin{pmatrix} 1& & s & -r \\ & 1 & -r & 0 \\ &&1 \\ &&&1
\end{pmatrix} \right); r, s \in \R  \right\}
$$
Again $H_{w_1}'$ acts trivially on ${\mathcal M}_{w_1}^{(r)'}$. We
will show that the only distribution $\omega$ that satisfies
Proposition \ref{3637} is the zero distribution. In the notation of
Proposition \ref{3637}, set
$$
L_\xi(m) = id - (\alpha \otimes \beta^m)(\xi)
$$
for $\xi \in H_{w_1}'$. We will think of $\xi$ as a pair of
unipotent matrices as above. Let $L$ be the Levi factor of $P$. We
claim that for any compact set $K \subset L$, there is $\xi \in
H_{w_1}'$ such that for all $m \in K$, the operator $L_\xi(m)$ is
invertible. This will prove the assertion about $\omega$. Indeed, we
know by Proposition \ref{3637} that $\langle\omega_m,
L_\xi(m)f(m)\rangle = 0$ for all $\xi \in H_x'$. Suppose an
arbitrary $F \in \Gamma_c(M, W)$ is given. Choose $\xi$ so that for
all $m \in \, Supp \, F$, the operator $L_\xi(m)$ is invertible. Set
$f(m) = L_\xi(m)^{-1} F(m)$. Then
\begin{equation*}
\langle\omega_m, F(m)\rangle = \langle\omega, L_\xi(m)f(m)\rangle =
0.
\end{equation*}
Hence we have to prove the claim about the invertibility of
$L_\xi(m)$. Again it is not hard to verify that $\alpha(\xi)$ is
trivial. Hence $L_\xi(m)$ is of the form $id - id \otimes X$ for an
operator $X$. It is easy to see that for such an operator to be
invertible it is sufficient that $X - id$ is invertible. In our
case, $X = \beta^m(\xi)$. We set $r=0$. Then we get
$$
X - id = \left(\psi\left( \tr \begin{pmatrix} s & 0 \\ 0 & 0
\end{pmatrix} \lambda^{-1} g g^t \right) - 1 \right) id
$$
whenever $m = \begin{pmatrix} g \\ & \lambda g^{-t} \end{pmatrix}
\in L$. If $g = \begin{pmatrix} a & b \\ c & d \end{pmatrix}$, then
we get
$$
X - id =\left(\psi\left( s\lambda(a^2 + b^2)\right) - 1 \right) id.
$$
Clearly for each $g$, $a^2 + b^2 \ne 0$. We just need to choose $s$
so that for each each $\lambda, g$, $0 < |s\lambda(a^2 + b^2)| < 1$.
This is possible as the set $K$ is compact.

\

\noindent {\em Step 3.} The above two steps show that the
restriction of $T$ to $P w_2 P = P w_2 N$ is a non-zero
distribution. Note that $P \times N$ acts transitively on $P w_2 N$.
So in the notation of Proposition \ref{3637}, we set $H'=H= P \times
N$, and $M=\{ e \}$.  The space of $V_\pi$-valued functions on the
singleton is the same of $V_\pi$. Hence the collection of
distributions on this space is canonically isomorphic to $V_\pi'$.
Consequently Proposition \ref{3637} implies that there is a
continuous functional $\lambda$ on $V_\pi$ such that
$$
T(f) = \left\langle \lambda, \int_P \int_N \sigma^{-1}(p)
\overline{\theta(n)} f(p w u) \, d_l(p) \, dn \right \rangle.
$$
Note that this makes sense as the inner integral is in fact a vector
in $V_\pi$. In order for $T(f)$ to be invariant under the torus in
$R$, the functional $\lambda$ has to be a $\chi$-Waldspurger model
for $\pi$. This shows that if $\pi$ does not have a
$\chi$-Waldspurger functional, then $\Pi$ does not have a
$\chi$-Bessel functional.

\

\noindent {\em Step 4.} Suppose we have two different Bessel
functionals, and we consider the associated distributions $T_1,
T_2$. By the above step there are two Waldspurger functionals
$\lambda_1, \lambda_2$ corresponding to $T_1, T_2$, respectively. By
the uniqueness of Waldspurger functionals, there is a constant $c$
such that $\lambda_2 = c \lambda_1$. Now we consider the
distribution $T = T_2 - c T_1$. This is a distribution satisfying
the same invariance properties, and further $T|_{Pw_2 P} \equiv 0$.
Consequently, $T=0$.

\

This finishes the proof of the theorem \ref{bessel-discrete}.

\end{proof}

\begin{rem}
This proof, especially the ``inductive" process on the dimension of
the double cosets, is by now fairly standard in harmonic analysis on
Lie groups. Compare Step 1 and 2 of the proof with Proposition 2.10
of \cite{Shalika}. In the rank one case, Shalika had outlined this
argument to one of the authors back in 2002. Also see
\cite{Wallach3, Wallach}.
\end{rem}

\section{The global correspondence for the dual pair $(\GSp, \GO)$}

We now turn to the global setting. Following \cite{Harris-Kudla} we
describe the theta correspondence for the dual pair $(\GSp, \GO)$.
Let $F$ be a number field and let $W, \langle\, . \, \rangle$ (resp.
$V, (\, . \,)$) be a non-degenerate symplectic (resp. orthogonal)
vector space over $F$ with ${\rm dim}_F W = 2n$ (resp. ${\rm dim}_F
V=m$). Let $G=\GSp(W)$ and $H=\GO(V)$. Also let $\mathbb{W}= V
\otimes W$ and $\langle\langle\, . \, \rangle\rangle = (\, . \,)
\otimes \langle \, . \, \rangle$, so that $G$ and $H$ form a dual
reductive pair in the similitude group $\GSp(\mathbb{W})$. More
precisely, we may view $\GSp(W)$ (resp. $\GSp(\mathbb{W})$) as
acting on $W$ (resp. $\mathbb{W}$) on the right. Then, if $h \in
\GO(V)$ and $g \in \GSp(W)$, we define $i(g, h) \in
\GSp(\mathbb{W})$, by $(v \otimes w).i(g, h)= h^{-1}v \otimes w g$.
Note that if $\nu$ denotes the similitude character for 
the various groups involved, then $\nu(i(g,h))=\nu(g)\nu(h)^{-1}$. Let
\begin{equation*}
R=\{ (g, h) \in G\times H\, \vert \, \nu(g) = \nu(h)\,\},
\end{equation*}
so there is a natural homomorphism $i: R \to \Sp(\mathbb{W})$.
Note that if we let $G_1 = \Sp(W)$ and $H_1=\O(V)$, then $G_1
\times H_1 \subset R$.

From now on assume that $m={\rm dim}_F V$ is even, and fix a
non-trivial character $\psi$ of $\A=\A_F$ trivial on
$F$. Let $ W = W_1 \oplus W_1^\vee$ denote a complete polarization
of the symplectic space $W$. Let $\omega=\omega_\psi$ denote the usual action of $G_1(\A)$
on the Schwartz-Bruhat space $\mathcal{S}((V \otimes W_1^\vee)(\A))$
of $(V \otimes W_1^\vee)(\A)$.
For $h\in H(\A)$ and $\varphi \in \mathcal{S}((V \otimes W_1^\vee)(\A))$, let
\begin{equation*}
L(h)\varphi(x)= |\nu(h)|^{-mn/2}\varphi(h^{-1}x).
\end{equation*}
Since $(\det h)^2=\nu(h)^m$, these operators are unitary with respect to the 
natural pre-Hilbert space structure on the Schwartz-Bruhat functions. Note
that the actions of $G_1(\A)$ and $H_1(\A)$ on
$\mathcal{S}((V \otimes W_1^\vee)(\A))$
commute, and
are the usual ones associated to the dual pair $(G_1, H_1)$. On
the other hand, it is not difficult to check the following lemma:
\begin{lem}
For $g_1 \in G_1(\A)$ and $h\in H(\A)$,
\begin{equation*}
L(h^{-1})\omega(g_1)L(h)=\omega( \begin{pmatrix}1 \\&
\nu(h)^{-1}\end{pmatrix} g_1 \begin{pmatrix} 1 \\ & \nu(h)
\end{pmatrix}).
\end{equation*}
\end{lem}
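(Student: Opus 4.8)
The plan is to verify the asserted operator identity on a set of generators of $\Sp(W)(\A)$ and then extend it by multiplicativity. Throughout write $m_\mu$ for the element $\mathrm{diag}(1,\mu)$ of $\GSp(W)$ acting as the identity on $W_1$ and as $\mu\in\A^\times$ on $W_1^\vee$, so that $\nu(m_\mu)=\mu$ and the claim reads $L(h^{-1})\omega(g_1)L(h)=\omega(m_{\nu(h)^{-1}}\,g_1\,m_{\nu(h)})$. Since $m_{\nu(h)}$ normalizes $\Sp(W)$ and (as $m=\dim V$ is even) the metaplectic cover splits over $\Sp(W)(\A)$, both sides are honest operators on $\mathcal S((V\otimes W_1^\vee)(\A))$, and both are representations of $\Sp(W)(\A)$ as functions of $g_1$; hence it suffices to check equality for $g_1$ ranging over the unipotent radical $N$ of the Siegel parabolic, its Levi $\GL(W_1)$, and one Weyl element $w$ interchanging $W_1$ and $W_1^\vee$, as these generate $\Sp(W)(\A)$.

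First I would dispose of the two easy cases. For $g_1=n\in N(\A)$, given by $q_n\in\Sym^2W_1$, the formula $(\omega(n)\varphi)(x)=\psi((q_V\otimes q_n)(x))\varphi(x)$ together with $L(h)\varphi(x)=|\nu(h)|^{-mn/2}\varphi(h^{-1}x)$ yields $(L(h^{-1})\omega(n)L(h)\varphi)(x)=\psi((q_V\otimes q_n)(hx))\varphi(x)$; interpreting $x$ as a map $W_1\to V$ and using Lemma~\ref{lemma2} together with the similitude relation $h^\vee q_V h=\nu(h)q_V$ (as maps $V\to V^\vee$), one finds $(q_V\otimes q_n)(hx)=\nu(h)(q_V\otimes q_n)(x)$, so the left side is $\omega(n')$ with $n'$ corresponding to $\nu(h)q_n$; a one-line matrix calculation then gives $m_{\nu(h)^{-1}}\,n\,m_{\nu(h)}=n'$. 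For $g_1=a\in\GL(W_1)(\A)$ the operator $\omega(a)$ affects only the $W_1^\vee$-variable while $L(h)$ affects only the $V$-variable, so they commute and the left side is $\omega(a)$; on the other hand $m_{\nu(h)}$, being a scalar on $W_1^\vee$ and the identity on $W_1$, commutes with the image $\mathrm{diag}(a,{}^ta^{-1})$ of $a$, so the right side is $\omega(a)$ as well.

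The case $g_1=w$ is the crux, and the step I expect to be the real obstacle. Here $\omega(w)$ acts by the self-dual $\psi$-Fourier transform on $V\otimes W_1^\vee$, and a matrix computation gives $m_{\nu(h)^{-1}}\,w\,m_{\nu(h)}=w\,d$, where $d\in\Sp(W)$ is the image of the scalar $a=\nu(h)^{-1}1_{W_1}\in\GL(W_1)$, so the right side is $\omega(w)\omega(d)$. On the operator side, conjugating the Fourier transform by $L(h)$ — that is, by the $V$-dilation $\varphi\mapsto|\nu(h)|^{-mn/2}\varphi(h^{-1}\cdot)$ — replaces it by the Fourier transform followed by the dilation that is $q_V$-adjoint to $h$, which by $h^\vee q_V h=\nu(h)q_V$ is precisely the scalar dilation by $\nu(h)^{-1}$ in the $W_1$-variable, together with an accompanying Jacobian factor and a Weil index. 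The content of this step is that this correction factor is exactly the normalizing constant $\chi_V(\det a)\,|\det a|^{m/2}$ built into $\omega(d)$. I would pin it down by noting that $L(h)$, $\omega(w)$ and $\omega(d)$ are all unitary on $L^2$, so the discrepancy is a scalar of modulus $1$, and then identify the root of unity and the power of $|\nu(h)|$ either from the classical transformation law of the Weil index under rescaling of the ambient quadratic space or from a direct Gaussian integral; extracting the quadratic character $\chi_V$ correctly is the one genuinely delicate point. Once the identity is known on $N$, $\GL(W_1)$ and $w$, multiplicativity of $\omega$ and of $h\mapsto L(h)$ propagates it to all of $\Sp(W)(\A)$.

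As a cross-check and an alternative to the constant-chasing above, I would also remark that both $g_1\mapsto L(h^{-1})\omega(g_1)L(h)$ and $g_1\mapsto\omega(m_{\nu(h)^{-1}}g_1m_{\nu(h)})$ are representations of $\Sp(W)(\A)$ on $\mathcal S((V\otimes W_1^\vee)(\A))$ which intertwine one and the same irreducible Schr\"odinger representation of the Heisenberg group of $V\otimes W$ — namely the one attached to the additive character $\psi\circ\nu(h)^{-1}$ — in the same way; by Stone--von Neumann they then differ by a scalar-valued function of $g_1$, which is a continuous character of $\Sp(W)(\A)$ and hence trivial since $\Sp(W)(\A)$ is perfect, and evaluation at $g_1=1$ fixes the constant to be $1$.
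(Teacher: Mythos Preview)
The paper does not actually prove this lemma; it merely asserts that ``it is not difficult to check'' and refers to \cite{Harris-Kudla} for the ambient setup. So there is no proof in the paper to compare against, and your generator-by-generator verification is precisely the standard way one checks such an identity. Your treatment of the unipotent radical $N$ and the Levi $\GL(W_1)$ is correct and cleanly written; the observation that $L(h)$ moves only the $V$-variable while $\omega(a)$ moves only the $W_1^\vee$-variable, together with the commutation of $m_{\nu(h)}$ with block-diagonal elements, disposes of those two cases.

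One comment on the remaining case and the alternative argument. For the Weyl element you correctly isolate the issue --- conjugating the Fourier transform by the $V$-dilation $L(h)$ and matching the resulting Jacobian and Weil-index contributions against $\chi_V(\det a)\,|\det a|^{m/2}$ --- but you stop short of actually computing it; this is fine for a sketch, since the calculation is indeed routine once set up. Your Stone--von Neumann cross-check, however, hides exactly the same work: to conclude that the two representations of $\Sp(W)(\A)$ differ by a character, you must first verify that $L(h^{-1})\rho_\psi(\,\cdot\,)L(h)$ and the Heisenberg action conjugated by $1\otimes m_{\nu(h)}$ agree as representations of the Heisenberg group, and that verification (especially on the $V\otimes W_1$ piece, where the similitude factor enters the pairing) is a computation of the same shape as the direct Fourier-transform check. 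So the alternative is not really a shortcut, though it is a perfectly valid and conceptually clean way to organize the argument.
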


Next, observe that we have an
isomorphism $G_1 \times H \tilde{\to} R$ given by
\begin{equation*}
(g_1, h) \mapsto ( g_1. \begin{pmatrix} 1 \\ & \nu(h)
\end{pmatrix}, h),
\end{equation*}
whose inverse is given by
\begin{equation*}
(g,h)\mapsto (g.\begin{pmatrix} 1 \\ & \nu(h)^{-1}
\end{pmatrix}, h).
\end{equation*}
Thus, we obtain a representation, again denoted by $\omega$, of
the group $R(\A)$ on $\mathcal{S}((V \otimes W_1^\vee)(\A))$, given by
\begin{equation*}
\omega(g, h)\varphi(x) = \omega(g_1).L(h)\varphi(x)
\end{equation*}
where $g=g_1. \begin{pmatrix} 1 \\ & \nu(g)^{-1}
\end{pmatrix}$. Note that the restriction of $\omega$ to the
subgroup $G_1(\A)\times H_1(\A)$ is just the usual action of the
dual pair.

For $(g, h) \in R(\A)$ and $\varphi \in \mathcal{S}((V \otimes W_1^\vee)(\A))$, let
\begin{equation*}
\theta(g, h; \varphi) = \sum_{x \in (V \otimes W_1^\vee)(F)} \omega(g,
h)\varphi(x).
\end{equation*}
It is then well-known
that $\theta(g, h; \varphi)$ is invariant under
$R(F)$. For $\varphi \in \mathcal{S}((V \otimes W_1^\vee)(\A))$ and a
cusp form $f \in \mathcal{A}_0(H)$, consider
the integral
\begin{equation*}
\theta(f; \varphi)(g) = \int_{H_1(F)\backslash H_1(\A)} \theta(g,
h_1 h; \varphi)f(h_1 h)\, dh_1
\end{equation*}
where $h \in H(\A)$ is any element such that $\nu(g) = \nu(h)$ and $dh_1$
is a Haar measure on $H_1(F) \backslash H_1(\A)$.

It is
easy to check that the integral defining $\theta(f; \varphi)$ is
absolutely convergent and is independent of the choice of $h$. One
can also check that $\theta(f; \varphi)$ is left-invariant under
$$\{\gamma \in G(F)\, | \, \nu(\gamma) = \nu(\gamma'), \text{ for
some }\gamma' \in H(F)\}.$$ As far as the central characters are
concerned, it's not hard to see that if the central character of
$f$ is $\chi$, then the central character of $\theta(f; \varphi)$
is $\chi.\chi_V^n$, where $\chi_V(x) = (x, (-1)^{m/2}\det V)$ is
the quadratic character associated to $V$, and therefore for $n$ even,
the central character of $\theta(f; \varphi)$ is $\chi$.

\begin{rem}
One usually defines 
$\theta(f;\varphi)(g)$ 
by integration on 
$H_1(F)\backslash H_1({\Bbb A})$ for $H_1 = O(V)$. However, if $f$ 
belongs to an automorphic representation of $GO(V)({\Bbb A})$
which does not remain irreducible when restricted to $GSO(V)({\Bbb A})$,
then the space of automorphic functions on $GSp(W)$ defined by
\begin{equation*}
\theta^0(f; \varphi)(g) = \int_{H_{1,0}(F)\backslash H_{1,0}(\A)} \theta(g,
h_1 h; \varphi)f(h_1 h)\, dh_1
\end{equation*} 
with $H_{1,0} = SO(V)$, is the same space of functions as those 
obtained as $\theta(f;\varphi)(g)$. We will use this well-known
observation, and use $\theta^0$ instead of $\theta$ in what follows.
\end{rem}

\subsection{Bessel Models} We recall the notion of Bessel model
introduced by Novodvorsky and Piatetski-Shapiro
\cite{Novodvorsky-PS}. For a symmetric matrix $S \in \GL(2,F)$, define a
subgroup $T = T_S$ of $\GL(2)$ by
\begin{equation*}
T = \{ g \in \GL(2)\, \vert \, \,^t g S g = \det g. S \}.
\end{equation*}
We consider $T$ as a subgroup of $\GSp(4)$ via
\begin{equation*}
t \mapsto \begin{pmatrix} t \\ & \det t. \,^t t ^{-1}
\end{pmatrix}.
\end{equation*}

Let us denote by $U$ the subgroup of $\GSp(4)$ defined by
\begin{equation*}
U = \left \{ u(X) = \begin{pmatrix} I_2 & X \\ & I_2 \end{pmatrix} \,
\vert \, X = \,^t X \right \}.
\end{equation*}
Finally, we define a subgroup $R$ of $\GSp(4)$ by $R = TU$.

Let $\psi$ be a non-trivial character of $F \backslash \A$.
Define a character $\psi_S$ on $U(\A)$ by $\psi_S(u(X)) =
\psi(\tr (SX))$ for $X = \,^t X \in \M_2(\A)$; as $S$ will
be fixed throughout,
we abbreviate $\psi_S$ to $\psi$. Let
$\chi$ be a character of $T(F) \backslash T(\A)$. Denote by
$\chi \otimes \psi$ the character of $R(\A)$ defined by
$(\chi \otimes \psi)(tu) = \chi(t) \psi(u)$ for $t \in
T(\A)$ and $u \in U(\A)$.

Let $\pi$ be an automorphic cuspidal representation of $\GSp_4(\A)$
realized on a space $V_\pi$  of automorphic functions. We assume that
\begin{equation}\label{compatible}
\chi \vert_{\A^\times} = \omega_\pi.
\end{equation}
Then for $\varphi \in V_\pi$, we define a function $B(\varphi,g)$ on
$\GSp_4(\A)$ by
\begin{equation}\label{Bessel}
B(\varphi,g) = \int_{Z_{\A} R_F \backslash R_{\A}} (\chi \otimes
\psi)(r)^{-1}. \varphi(rg) \, dr.
\end{equation}
We say that $\pi$ has a global Bessel model of type $(S, \chi,
\psi)$ if for some $\varphi \in V_\pi$, the function
$B(\varphi,g)$ is non-zero. In this case, the $\C$-vector space of
functions on $\GSp_4(\A)$ spanned by $\{ B(\varphi,g) \, \vert \,
\varphi \in V_\pi \}$ is called the space of the global Bessel model
of $\pi$. We abbreviate $B(\varphi,e) $ to be $B(\varphi)$.

Let $\mu: W_1 \rightarrow V$ 
be a homomorphism of vector spaces
 such that the quadratic form
on $V$ restricted to $W_1$ via $\mu$ is the quadratic form on $W_1$
with respect to which the Fourier coefficients is being calculated
on $\GSp(W)$, i.e., the symmetric matrix $S$ in the notation above, but now we 
prefer to do things in a co-ordinate free way. Let $\GO(W_1)^+$ 
be the subgroup of $\GO(W_1)$ consisting
of those elements for which the similitude factor is the similitude
factor of an element of $\GO(V)$. (It is understood that the
quadratic form on $W_1$  arises from a $\mu: W_1 \rightarrow V$ which is 
 fixed.)  In our applications, $\GO^+(W_1)=
\GO(W_1)$.

A map $\mu: W_1 \rightarrow V$ will be identified to a ($F$-valued) 
point of $V
\otimes W_1^\vee$, also denoted by $\mu$, and therefore for a
function $f \in {\mathcal S}((V \otimes W_1^\vee) ({\Bbb A}))$, it
makes sense to consider $f(\mu)$, as well as $L(h)f(\mu)$ for any $h
\in [\GO(V)\times \GL(W_1)]({\Bbb A})$. Let $\O(W_1^\perp)$ be the
subgroup of $\O(V)$ acting trivially on $\mu: W_1 \rightarrow V$. 
It is a  standard calculation that in the
summation defining the theta function, 
$\theta(\varphi) = \sum_{ \mu: W_1\rightarrow V} \varphi(\mu)$, only
those $\mu$'s contribute to the Fourier coefficient we are looking
at for which the quadratic form on $V$ restricts to the desired
quadratic form on $W_1$. Since such embeddings
$\mu: W_1\rightarrow V$ are conjugate under $\SO(V)$ with stabilizer
$\SO(W_1^\perp)$,
  for an automorphic form $f$ on $\GSO(V)
({\Bbb A})$, $\varphi \in {\mathcal S}((V \otimes W_1^\vee) ({\Bbb A}))$,
and $\chi$ an automorphic form on $\GSO(W_1)({\Bbb A})$
$$B_{\chi, \mu}(\theta^0(f;\varphi))
= \int_{\SO(W^\perp_1)(\A) \backslash \SO_V(\A)} \Lambda_\mu(f,
\chi)(h) L(h ) \varphi({\mu}) \, dh,$$ where $h \in \SO(V)({\Bbb A})$, and
\begin{eqnarray*}
\Lambda_\mu(f, \chi)(h) & =&  \int_{\A^\times \GSO(W_1)
\backslash \GSO(W_1)(\A)} \left [\int_{\SO(W^\perp_1) \backslash
\SO(W^\perp_1)(\A)} f(\delta h(g) h)d \delta \right ] \chi(g)dg \\
& = &  
\int_{\A^\times \G[\SO(W_1^\perp) \times \SO(W_1)](F)
\backslash \G[\SO(W_1^\perp) \times \SO(W_1)](\A)} f(\delta h(g)
h) \chi(g) \, d\delta \, dg,
\end{eqnarray*}
where $h(g) \in \GSO(V)(\A)$ has similitude factor $\nu(g)$,
 preserves the embedding $\mu: W_1 \rightarrow V$, and acts as $g$ 
on $W_1$; we have 
$(\delta,g) \in G[SO(W_1^\perp) \times SO(W_1)] \subset 
GSO(W_1^\perp) \times GSO(W_1)$. For sake of explicitness, we record
the following simple lemma needed for the last equality above.

\begin{lem}
Let $G$ be an algebraic group over a number field $F$, and $N$ 
a normal subgroup, with $H = N\backslash G$. Then for appropriate choice of
Haar measures, the following holds for appropriate choice of
functions $f$ on $G(F)\backslash G({\Bbb A})$
$$\int_{H(F)\backslash H({\Bbb A})} 
\int_{N(F)\backslash N({\Bbb A})} f(ng)dn d\bar{g} = 
\int_{G(F)\backslash G({\Bbb A})} f(g) dg.$$
\end{lem}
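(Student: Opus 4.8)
The plan is to prove the identity by the standard two-step ``unfolding'' along the exact sequence $1 \to N \to G \to H \to 1$, reducing it to Fubini's theorem for a fibration of homogeneous spaces together with one genuinely arithmetic input. Put $F(g) = \int_{N(F)\backslash N({\Bbb A})} f(ng)\, dn$. The first point is that $F$ descends to a function on $N({\Bbb A})G(F)\backslash G({\Bbb A})$. For $n_0 \in N({\Bbb A})$ the change of variables $n \mapsto n n_0$ is a measure-preserving bijection of $N(F)\backslash N({\Bbb A})$ (right translation commutes with the left $N(F)$-action, and $N$ is unimodular), whence $F(n_0 g)=F(g)$. For $\gamma\in G(F)$ the change of variables $n\mapsto \gamma^{-1}n\gamma$ is a bijection of $N(F)\backslash N({\Bbb A})$ --- here one uses that $\gamma$ normalizes both $N(F)$ and $N({\Bbb A})$ --- and it is measure preserving because by the product formula the adelic modulus of an $F$-rational automorphism of $N$ is trivial; combined with the left $N(F)$-invariance of $f$ this gives $F(\gamma g)=F(g)$. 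This appeal to the product formula is the only place where rationality of $\gamma$ genuinely enters.

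Next I would decompose $\int_{G(F)\backslash G({\Bbb A})} f$ along the projection $G(F)\backslash G({\Bbb A}) \twoheadrightarrow G(F)N({\Bbb A})\backslash G({\Bbb A})$: the fiber over the base coset is $G(F)\backslash G(F)N({\Bbb A}) \cong (G(F)\cap N({\Bbb A}))\backslash N({\Bbb A}) = N(F)\backslash N({\Bbb A})$ (using $G(F)\cap N({\Bbb A})=N(F)$, which holds since $N$ is an $F$-subscheme), and the fibers over other cosets are translates of it. Choosing the Haar measures compatibly --- this is the force of ``for appropriate choice of Haar measures'', and it requires $\Delta_G|_N = \Delta_N$, automatic here since all the groups in sight are unimodular --- so that $dg = dn\,d\bar g$, Fubini gives $\int_{G(F)\backslash G({\Bbb A})} f(g)\,dg = \int_{G(F)N({\Bbb A})\backslash G({\Bbb A})} F(\bar g)\, d\bar g$, valid for any $f$ for which the iterated integral converges absolutely (compactly supported modulo $G(F)$, or a cusp form as in the intended application), which is the force of ``for appropriate choice of functions $f$''. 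It then remains to identify $G(F)N({\Bbb A})\backslash G({\Bbb A})$ with $H(F)\backslash H({\Bbb A})$ via the canonical $G(F)$-equivariant map $G({\Bbb A})/N({\Bbb A}) \to H({\Bbb A})$ induced by $G\to H$, transporting $d\bar g$ to a Haar measure on $H(F)\backslash H({\Bbb A})$.

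The step I expect to be the real (if minor) obstacle is this last identification: the map $G({\Bbb A})\to H({\Bbb A})$ need not be surjective, its cokernel being controlled by a restricted product of the local $H^1(k_v,N)$, so $N({\Bbb A})\backslash G({\Bbb A})$ is in general only an open-and-closed piece of $H({\Bbb A})$, and correspondingly $G(F)N({\Bbb A})\backslash G({\Bbb A})$ sits as a closed subset of $H(F)\backslash H({\Bbb A})$. This is precisely why the statement is hedged with ``appropriate''. In the intended application $N=\SO(W_1^\perp)$ with $W_1^\perp$ anisotropic of dimension two, so $N(F)\backslash N({\Bbb A})$ is compact and the discrepancy affects neither the convergence nor the equality of the two integrals actually written down; one verifies the identity directly in that case. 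Everything else is routine bookkeeping with unimodularity and Fubini.
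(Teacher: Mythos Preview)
Your argument is correct and, in fact, considerably more careful than what the paper offers: the paper records this lemma without proof, describing it only as a ``simple lemma needed for the last equality above.'' So there is no proof in the paper to compare against; your unfolding via Fubini along $1 \to N \to G \to H \to 1$, the check that the inner integral descends using the product formula, and the identification of the base with $H(F)\backslash H({\Bbb A})$ is exactly the standard justification one would supply if asked to fill in the details.

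Your final paragraph is a genuine addition: the paper's formulation with the qualifiers ``appropriate choice of Haar measures'' and ``appropriate choice of functions'' is clearly meant to wave away exactly the surjectivity/$H^1$ issue you flag, and you are right that in the application (with $N = \SO(W_1^\perp)$ for $W_1^\perp$ a two-dimensional anisotropic quadratic space) this causes no trouble. If anything, you have been more scrupulous than the authors intended.
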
  

\

The following theorem is now immediate by standard arguments:
\begin{thm}
Let $V$ be an even dimensional non-degenerate quadratic space over a
global field $F$, and $\mu: W_1 \rightarrow V$ a linear map giving
rise to a non-degenerate quadratic form on $W_1$ making it possible
to speak of $\mu$-th Fourier coefficient of an automorphic form on
$\GSp(W_1 \oplus W_1^\vee)$, and hence given an automorphic form
$\chi$ on $\GO(W_1)(\A)$, one can define 
the Bessel coefficient $B_{\chi,\mu}(F)$
for an automorphic form $F$ on $\GSp(W_1 \oplus W_1^\vee)$. Given an
automorphic cuspidal representation $\rho$ of $\GO(V)$, there is a
$f \in \rho$ and $\varphi \in {\mathcal S}((V \otimes
W_1^\vee)(\A))$ such that $B_{\chi, \mu}(\theta(f;\varphi)) \ne 0$
if and only if there is $f \in \rho$ such that $\Lambda_\mu(f,
\chi) \ne 0$.
\end{thm}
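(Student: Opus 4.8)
The plan is to reduce the whole statement to the explicit unfolding of $B_{\chi,\mu}(\theta^0(f;\varphi))$ already recorded just before the theorem, after which it becomes purely a question of when a single adelic integral can be made non-zero by varying the Schwartz function. First I would pass from $\theta$ to $\theta^0$: by the Remark above (on replacing $\theta$ by $\theta^0$), as $f$ runs over the $\GO(V)$-representation $\rho$ and $\varphi$ over $\mathcal{S}((V\otimes W_1^\vee)(\A))$ the automorphic functions $\theta(f;\varphi)$ on $\GSp(W)$ span the same space as the functions $\theta^0(f;\varphi)$; since $F\mapsto B_{\chi,\mu}(F)$ is linear, some $B_{\chi,\mu}(\theta(f;\varphi))$ is non-zero if and only if some $B_{\chi,\mu}(\theta^0(f;\varphi))$ is. Then I would invoke the identity
$$B_{\chi,\mu}(\theta^0(f;\varphi)) \;=\; \int_{\SO(W_1^\perp)(\A)\backslash\SO(V)(\A)} \Lambda_\mu(f,\chi)(h)\,L(h)\varphi(\mu)\,dh,$$
so that the theorem becomes the assertion: for fixed $f\in\rho$, the right-hand side is non-zero for some $\varphi$ precisely when the function $h\mapsto\Lambda_\mu(f,\chi)(h)$ on $X:=\SO(W_1^\perp)(\A)\backslash\SO(V)(\A)$ is not identically zero. (Absolute convergence of the Bessel integral, the theta integral and the integral defining $\Lambda_\mu$ is the usual matter — rapid decay of cusp forms together with compactness of the relevant arithmetic quotients — already invoked in the text.)

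The ``only if'' direction is then immediate: if $\Lambda_\mu(f,\chi)\equiv 0$ on $X$, the integrand above vanishes for every $\varphi$, so $B_{\chi,\mu}(\theta^0(f;\varphi))=0$ for all $\varphi$; taking the contrapositive and combining with the reduction of the first paragraph gives this implication.

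For the ``if'' direction I would argue by concentrating $\varphi$. Fix $f\in\rho$ with $\Lambda_\mu(f,\chi)\not\equiv 0$. Since $\rho$ is cuspidal, $\Lambda_\mu(f,\chi)$ is continuous on $X$ (smooth in the archimedean variables, locally constant in the others), so we may choose $h_0\in\SO(V)(\A)$ with $\Lambda_\mu(f,\chi)(h_0)\ne 0$. On $\SO(V)$ one has $\nu=1$, hence $L(h)\varphi(\mu)=\varphi(h^{-1}\mu)$; moreover the stabilizer of $\mu$ in $\SO(V)(\A)$ is exactly $\SO(W_1^\perp)(\A)$, so $h\mapsto h^{-1}\mu$ identifies $X$ homeomorphically with the orbit $\SO(V)(\A)\cdot\mu$, a closed subset of $(V\otimes W_1^\vee)(\A)$ (Witt's theorem, cf. Lemma~\ref{lemma4}). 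Now choose $\varphi\ge 0$ in $\mathcal{S}((V\otimes W_1^\vee)(\A))$ supported in a small neighbourhood of $h_0^{-1}\mu$ and normalized so that $\int_X\varphi(h^{-1}\mu)\,dh=1$; shrinking its support and using the continuity of $\Lambda_\mu(f,\chi)$, the integral $\int_X\Lambda_\mu(f,\chi)(h)\varphi(h^{-1}\mu)\,dh$ becomes as close as we like to $\Lambda_\mu(f,\chi)(h_0)\ne 0$, hence non-zero. This produces $\varphi$ with $B_{\chi,\mu}(\theta^0(f;\varphi))\ne 0$, and the reduction of the first paragraph finishes the proof.

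I expect the only genuinely non-formal point — the main obstacle — to be the geometric input used in the third paragraph: that the $\SO(V)(\A)$-orbit of $\mu$ in $(V\otimes W_1^\vee)(\A)$ is closed (so that bump functions really do restrict to bump functions on $X$) with stabilizer precisely $\SO(W_1^\perp)(\A)$. This rests on Witt's extension theorem, together with the standard fact that over a local or adelic field the locus of linear maps $W_1\to V$ inducing the prescribed quadratic form on $W_1$ is Zariski closed and decomposes into finitely many $\SO(V)$-orbits, each of which is therefore closed; at the archimedean places one must note in addition that the relevant orbit is a union of connected components of this locus. Everything else — unfolding the theta integral to obtain the displayed formula, checking convergence, and the passage between $\theta$ and $\theta^0$ — is routine and has already been signposted in the text.
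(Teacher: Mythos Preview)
Your proposal is correct and is precisely the ``standard argument'' that the paper merely gestures at: the paper gives no proof beyond the sentence ``The following theorem is now immediate by standard arguments,'' having already recorded the unfolded formula for $B_{\chi,\mu}(\theta^0(f;\varphi))$. Your reduction to $\theta^0$, the trivial ``only if'' direction, and the ``if'' direction via concentrating $\varphi$ near a point of the closed $\SO(V)(\A)$-orbit of $\mu$ are exactly what is intended; the geometric input you flag (closedness of the orbit, stabilizer $=\SO(W_1^\perp)$, hence restriction of Schwartz bump functions gives compactly supported bump functions on $X$) is the only non-formal point, and your justification via Witt plus the assumption $W_1^\perp\ne 0$ is the right one.
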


Just as in the local case, 
the following 
diagram allows one to identify $(E^* \times E^*)/\Delta F^*$ 
inside $(D^* \times D^*)/\Delta F^*$ as the subgroup 
$G[SO(E) \times SO(E)]$ inside
$GSO(D)=GSO(E \oplus E)$

$$
\xymatrix{ &[E ^*\times E^*]/ \Delta(F^*) \ar@{->}[dl]_{\cong}
\ar@{->}[dr]^{} \\
\G[\SO(E)\times \SO(E)] & & ( D^* \times D^*)/(\Delta F^*)}
$$

Therefore the
integral 
$$\int_{\A^\times \G[\SO(W_1^\perp) \times \SO(W_1)](F)
\backslash \G[\SO(W_1^\perp) \times \SO(W_1)](\A)} f(\delta h(g)
) \chi(g) \, d\delta \, dg,$$
becomes a product of two toral 
integrals on  $E^*{\Bbb A}_F^*\backslash {\Bbb A}_E^*$ on which the
theorem of Waldspurger applies, yielding Theorem \ref{thm1.5} of the
introduction. In the case where the dual pair involves division
algebras one can prove a similar theorem. The proof carries over in
an essentially verbatim manner.

\section{Global implies local}  Suppose we
want to show that a given representation $\pi_w$ of $\GSp_4(k)$
has a specific Bessel model $\lambda_w$. One way to show is to 
prove
that there is an automorphic cuspidal representation $\Pi =
\otimes_v \Pi_v$ of $\GSp_4(\A)$ with a non-zero global Bessel
functional $\Lambda = \otimes_v \Lambda_v$ in such a way that $\Pi_w
= \pi_w$ and $\Lambda_w = \lambda_w$.  As not all local 
representations $\pi_w$ arise as the local component 
of an automorphic representation, this is not always
possible but in some situations works quite well. For example if $w$
is a real place, and $\pi_w$ is a discrete series representation,
then it is the image of local theta correspondence from a discrete
series representation of an orthogonal group $\GO(4)$. Then since
discrete series representations of $\GO(4)$ can be globalized to
automorphic cuspidal representations, we can use global theta
correspondence to construct candidates for $\Pi$. Then one uses
results on non-vanishing of $L$-functions of $\GL(2)$
representations to show that $\Pi$ has global Bessel models with
desired local properties. See \cite{Takloo-Bighash} for details. Let
us explain how this procedure works in the non-archimedean
situation, assuming that the theorem about Bessel models has been proved
in the Archimedean case, as well as for principal series
representations. For this we follow the method
of \cite{P} and \cite{P-S}
to deduce the local theorem from the global theorem. Thus we give
ourselves a local field $k$, a number field $F$, a non-archimedean
place $v$ of $F$ such that $F_v \cong k$, a quadratic extension $E$
of $F$ such that  $k \otimes_FE =K$. 
Let $\Pi_v$ be a cuspidal
representation on $\GSp_4(k)$, and $\chi_v$ a character of $K^*$
considered as a character of $R(k) = K^* N(k)$ extending a 
character $\psi_v$ on $N$, such that ${\rm
Hom}_R[\pi_v,\chi_v] \not = 0$. Assume that $\Pi_v$ comes as a theta
lift of a representation $\tau_v$ of $\GSO_4(k)$. We globalize
$\tau_v$ to an automorphic representation $\tau$ on $\GSO_4({\Bbb
A})$ which is unramified at all finite places outside $v$. This is
standard. Then we construct the automorphic form $\Pi$ on
$\GSp_4(k)$ to be the theta lift of $\tau$.

By \cite{P, P-S}, there exists a Gr\"osssencharacter $\chi$ on
${\Bbb A}_E^*$ with local component $\chi_v$ at a place of $E$ above
$v$ of $F$, such that the map taking $f \in \Pi$ to
$$\int_{R(F){\Bbb A}_F^*\backslash R({\Bbb A}_F)} f(g) \chi^{-1}(g) dg ,$$
is not identically zero. By our global theorem, the period
integrals,
$$\int_{E^*{\Bbb A}_F^*\backslash 
{\Bbb A}_E^*} f_1(g) \chi^{-1}(g) dg ,$$
and
$$\int_{E^*{\Bbb A}_F^*\backslash {\Bbb A}_E^*} f_2(g) \chi^{-1}(g) dg $$
on $\Pi_1$ and $\Pi_2$ respectively are not identically zero. This
implies by Waldspurger's theorem that
$$ L(\frac{1}{2}, BC_E(\Pi_1) \otimes \chi^{-1}) \not = 0,$$
and
$$ L(\frac{1}{2}, BC_E(\Pi_2) \otimes \chi^{-1}) \not = 0.$$

In particular the global root number of these $L$-functions is 1:

$$ 1 = \epsilon(\frac{1}{2}, BC_E(\Pi_1) \otimes \chi^{-1})
= \prod_w \epsilon_w ,$$ and
$$ 1 = \epsilon(\frac{1}{2}, BC_E(\Pi_2) \otimes \chi^{-1})  = \prod_w
\epsilon'_w,$$ which as all the other epsilon factors away from the
chosen one at $v$ are equal to 1, we find that

$$\epsilon_v( BC_E(\Pi_1) \otimes \chi^{-1}) = 1,$$ and
  $$\epsilon_v( BC_E(\Pi_2) \otimes \chi^{-1}) = 1,$$
proving  the local result as a consequence of the corresponding
global theorem. This was a case where both local and global results
could be proved by means of theta correspondence. However, the method
acquires teeth in contexts where global results are available without
local facts, such as in residue characteristic 2.

\section{B\"ocherer, global Gross-Prasad, and the Ichino-Ikeda  conjecture}

We begin with  the following
conjecture of B\"ocherer:
\begin{conj} Let $\Phi$ be a holomorphic cuspidal Siegel eigenform
of degree two and weight $k$ with respect to $\Sp_4(\Z)$. Let
\begin{equation*}
\Phi(Z) = \sum_{T >0} a(T, \Phi) \exp(2 \pi \sqrt{-1} tr (T Z))
\end{equation*}
be its Fourier expansion. For a fundamental discriminant $-D$,
i.e. a discriminant of an imaginary quadratic field
$\Q(\sqrt{-D})$, let
\begin{equation*}
B_D(\Phi) = \sum_{\{T \, \vert \, \det(T) = {D \over 4}\} / \sim }
{a (T, \Phi) \over \epsilon(T)},
\end{equation*}
where $\sim$ denotes the equivalence relation defined by $T_1 \sim
T_2$ when $T_1 = ^t \gamma T_2 \gamma$ for some $\gamma \in
\SL_2(\Z)$ and $\epsilon(T) = \# \{ \gamma \in \SL_2(\Z) \, \vert \,
^t \gamma T \gamma = T \}$.

Then there exists a constant $C_\Phi$ which depends only on $\Phi$
such that
\begin{equation*}
L({1\over 2}, \pi_\Phi \otimes \chi_D) = C_\Phi. D^{-k+1}. \vert
B_D(\Phi)\vert^2,
\end{equation*}
where $\pi_\Phi$ is the automorphic representation of
$\GSp_4(\A_\Q)$ associated with $\Phi$, $\chi_D$ is the quadratic
character of $\A_\Q^\times$ associated with $\Q(\sqrt{-D})$ and the
left hand side denotes the central critical value of the quadratic
twist by $\chi_D$ of the degree four Spinor L-function for
$\pi_\Phi$.
\end{conj}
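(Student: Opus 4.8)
The plan is to deduce B\"ocherer's conjecture from the refined, Ichino--Ikeda-style form of the global Gross--Prasad conjecture for Bessel periods on $\SO(5)\cong\PGSp_4$, combined with the unfolding of the Bessel period recorded in this paper and an explicit archimedean computation. \emph{Step 1 (from Fourier coefficients to an adelic period).} First I would rewrite $B_D(\Phi)$ as the value $B(\varphi_\Phi,e)=B(\varphi_\Phi)$ of the global Bessel function of \eqref{Bessel} attached to the cusp form $\varphi_\Phi$ on $\GSp_4(\A_\Q)$ generated by $\Phi$, taking $S$ a half-integral symmetric matrix of discriminant $-D$, $\chi$ the trivial character of $T_S(\Q)\backslash T_S(\A)$, and $\psi$ the standard additive character. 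This is the classical dictionary: $T_S(\Q)\backslash T_S(\A)/T_S(\widehat{\Z})T_S(\R)$ is the ideal class group of the imaginary quadratic order of discriminant $-D$, the classes $\{T:\det T=D/4\}/\SL_2(\Z)$ index its elements, the weight $1/\epsilon(T)$ is exactly the local volume $1/\#(T_S(\Q)\cap\mathrm{stab})$, the coefficient $a(T,\Phi)$ is (up to the normalising factor $e^{-2\pi\,\tr S}$) the value of $\varphi_\Phi$ on the corresponding matrix, and for $\Phi$ of level one the finite part of the integral collapses to a single unramified orbit. The archimedean integral against the lowest weight vector of the holomorphic discrete series of weight $k$ is where the power $D^{-k+1}$ will eventually come from; it is computed by the explicit formula for the Bessel function of that representation (Sugano), and this is the first place care is needed.

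\emph{Step 2 (the refined Bessel period identity).} Next I would invoke the refined global Gross--Prasad conjecture for the pair $(\SO(5),\SO(2))$: for a cuspidal $\pi$ on $\PGSp_4$ and a character $\Lambda$ of the torus,
\[\frac{|B_{\Lambda,S}(\varphi)|^2}{\langle\varphi,\varphi\rangle}=C_\infty(\pi_\infty,\Lambda_\infty)\cdot\frac{L(\tfrac12,\pi\times\sigma_\Lambda)}{L(1,\pi,\mathrm{Ad})}\cdot\prod_{p<\infty}\alpha_p^{\natural},\]
where $\sigma_\Lambda$ is the $\GL_2$-automorphic induction of $\Lambda$ and the $\alpha_p^{\natural}$ are normalised local Bessel integrals, equal to $1$ at all unramified places. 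Specialising $\Lambda=\mathbf 1$ and $E=\Q(\sqrt{-D})$ one has $\sigma_{\mathbf 1}$ with $L$-factor $\zeta\cdot L(\,\cdot\,,\chi_D)$, so $L(\tfrac12,\pi_\Phi\times\sigma_{\mathbf 1})=L(\tfrac12,\pi_\Phi)\,L(\tfrac12,\pi_\Phi\otimes\chi_D)$; since $\Phi$ has level one the finite product is trivial, and absorbing the $D$-independent quantities $L(\tfrac12,\pi_\Phi)$, $L(1,\pi_\Phi,\mathrm{Ad})$, $\langle\varphi_\Phi,\varphi_\Phi\rangle$ and the archimedean constant into a single $C_\Phi$, together with the factor $D^{-k+1}$ extracted in Step 1, yields precisely B\"ocherer's identity. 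For the $\pi_\Phi$ that are accessible to the theta methods of this paper --- namely the Saito--Kurokawa and Yoshida (endoscopic) types, which arise from $\GO_4$ --- one can sidestep the full refined conjecture: by Theorem \ref{thm1.5} and the seesaw with $\GSO_4=(\GL_2\times\GL_2)/\Delta\mathbb{G}_m$ the Bessel period factors as a product of two $\GL_2$-toral periods, and Waldspurger's formula turns these into the central values above, again up to the same $D$-independent constants.

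\emph{Main obstacle.} The crux is the refined identity itself, as opposed to the mere nonvanishing that already follows from Theorem \ref{thm1.5} and Waldspurger: establishing it for \emph{general type} Siegel eigenforms, whose parameters are irreducible four-dimensional and hence not theta lifts from $\GO_4$, requires genuinely new input --- either a relative trace formula comparison \`a la Jacquet--Rallis and Furusawa--Martin, with its local orbital integral identities and a fundamental lemma, or Furusawa's $\GSp_4\times\GL_2$ Rankin--Selberg integral together with a complete local theory of the resulting zeta integrals at every place. Within either route the delicate point is the archimedean local integral for the holomorphic discrete series of weight $k$: pinning down its exact value, and in particular the clean $D$-dependence $D^{-k+1}$, is the technical heart, and this is where the computation of Step 1 must be carried out in full. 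A secondary subtlety is excluding degenerate behaviour --- $\pi_\Phi$ CAP, or vanishing of $L(1,\pi_\Phi,\mathrm{Ad})$ or of the auxiliary factor $L(\tfrac12,\pi_\Phi)$; for level-one $\Phi$ this is under control because Saito--Kurokawa forms occur only in restricted weights, but a careful treatment of the remaining cases is needed before $C_\Phi$ can be declared to depend on $\Phi$ alone.
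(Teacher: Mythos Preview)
The statement you are addressing is presented in the paper as a \emph{conjecture}, not as a theorem; the paper offers no proof and does not claim one. What the paper does is exactly in the spirit of your sketch: it observes (via Sugano's result, cf.\ \cite{Furusawa}) that $B_D(\Phi)$ is essentially the adelic Bessel period integral, so that B\"ocherer's conjecture is a refinement of the global Gross--Prasad conjecture in this context; it then formulates the Ichino--Ikeda conjecture for $\GSp_4$ and remarks that to deduce B\"ocherer from Ichino--Ikeda one would still need to control the dependence of the unramified local integral $I_v(\varphi_v,\chi_v)$ on the torus $K_v^\ast$ and the character $\chi_v$. Your Step~1 and Step~2 match this discussion, and your ``Main obstacle'' paragraph correctly identifies that the refined period identity for general-type $\pi_\Phi$ is the missing ingredient.

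So there is no discrepancy of approach to diagnose: both you and the paper agree on the strategy (Bessel period $\leadsto$ Ichino--Ikeda $\leadsto$ central $L$-value) and on where it currently breaks down. The only correction is one of framing: this is a proof \emph{plan} for an open conjecture, conditional on the refined Gross--Prasad/Ichino--Ikeda identity and on the explicit archimedean and unramified computations you mention; it is not a proof, and the paper does not claim to have one either. Note also a minor point in your Step~2: the factor $L(\tfrac12,\pi_\Phi)$ you absorb into $C_\Phi$ could in principle vanish, in which case the identity degenerates; the paper does not address this, and neither does your sketch, so it remains a genuine caveat.
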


This is a natural generalization of a well-known theorem of
Waldspurger \cite{Waldspurger} for modular form on the upper half
plane to the case of the Siegel cusp forms. B\"ocherer proved this
assertion in the cases of the Klingen Eisenstein series and the
Saito-Kurokawa lifting in \cite{Bocherer}. Later he and
Schulze-Pillot proved this in the case of the Yoshida lifting in
\cite{BS}. More recently Furusawa and Shalika have started
investigating this conjecture from a different angle
\cite{Furusawa-Shalika}. Their approach to the problem is to
generalize Jacquet's {\em relative trace formula} for $\GL(2)$ to
$\GSp(4)$. Jacquet had used his $\GL(2)$ relative trace formula to
give another proof for the theorem of Waldspurger mentioned above.

Here is the global form of the conjecture of Gross and Prasad in this
context.
\begin{conj} For a globally generic cuspidal automorphic
representation $\Pi$ of $\GSp_4(\A_F)$, $E$ a quadratic extension of
the global field $F$, and $\chi$ a Gr\"ossencharacter of
$\A_E^\times$ such that $\chi$ restricted to ${\Bbb A}_F^*$ is the
central character of $\Pi$, we have:
\begin{equation}\label{L}
L({1\over 2},  \Pi \otimes {\rm ind}_E^F (\chi^{-1})) \ne 0
\end{equation}
if and only if there exists a triple $(D, \Pi_D, \Psi_D)$, where $D$
is a central simple quaternion algebra over $F$ containing $E$,
$\Pi_D$ is a cuspidal automorphic representation of $G_D$, an inner
form of $\GSp_4(F)$ defined using  $D$, which corresponds to $\Pi$
in the functorial sense, and $\Psi_D$ is a cusp form in the space of
$\Pi_D$ such that
\begin{equation}\label{B}
\int_{\A_F^\times R_D(F) \backslash R_D(\A_F)} \Psi_D(r) \tau^{-1}(r)
\, dr \ne 0.
\end{equation}
Here $R_D$ denotes the Bessel subgroup of $G_D$ and $\tau$ is the
character of $R_D(\A_F) = {\A_E^* \cdot N({\A_F})}$ which is $\chi$
on ${\Bbb A}_E^*$, and $\psi$ on $N({\Bbb A}_F)$.

\end{conj}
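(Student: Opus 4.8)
The plan is to prove the conjecture for the class of $\Pi$ handled by Theorem \ref{thm1.5}, namely those $\Pi$ arising as a global theta lift from $\GSO(4)$, and to reduce the assertion to Waldspurger's theorem on toral periods for $\GL_2$. So write $\Pi = \theta(\Pi_1 \boxtimes \Pi_2)$ for cuspidal automorphic representations $\Pi_1, \Pi_2$ of $\GL_2(\A_F)$ (or of $D^*(\A_F)$ for a quaternion algebra $D$ over $F$) with equal central characters, so that the global parameter of $\Pi$ is $\sigma_1 \oplus \sigma_2$. Let $E$ be the relevant quadratic algebra, pick a four dimensional quadratic space $V$ with $\GSO(V) \cong [D_1^* \times D_2^*]/\Delta \mathbb{G}_m$ into which $E$ embeds, and let $W_1 \subset V$ be the two dimensional subspace carrying the form attached to the fixed character $\psi$ of $N$. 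Then $W_1 \cong E$ and $W_1^\perp \cong E$ with their norm forms, and, as in the global see-saw diagram of the global correspondence section, $\G[\SO(W_1) \times \SO(W_1^\perp)]$ is $[E^* \times E^*]/\Delta F^*$ sitting inside $\GSO(V)$.

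First I would unfold the Bessel period $B(\varphi, g)$ of $\theta^0(f; \varphi)$: only the embeddings $\mu: W_1 \to V$ pulling the form on $V$ back to the prescribed form on $W_1$ contribute, these form a single $\SO(V)$-orbit with stabilizer $\SO(W_1^\perp)$, and one is left with the inner integral $\Lambda_\mu(f, \chi)$, which by the Lemma on integration over $G/N$ is a period over $\A_F^* \G[\SO(W_1^\perp) \times \SO(W_1)](F) \backslash \G[\SO(W_1^\perp) \times \SO(W_1)](\A_F)$. Via the see-saw identification and the substitution $\iota: (x,y) \mapsto (xy, x\bar{y})$ (specialized at the second coordinate trivial, exactly as in Corollary \ref{corollary5}), this period factors as the product of the two toral periods $\int_{E^* \A_F^* \backslash \A_E^*} f_i(g) \chi^{-1}(g)\, dg$ on $\Pi_1$ and $\Pi_2$. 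Hence the Bessel period on $\Pi$ is nonzero for some $\varphi$ if and only if both toral periods are nonzero for some choices, and by Waldspurger's theorem this happens if and only if $L(1/2, BC_E(\Pi_i) \otimes \chi^{-1}) \neq 0$ for $i = 1, 2$ (together with local nonvanishing conditions, which are automatic at almost all places and can be arranged at the rest). Since the parameter of $\Pi$ is $\sigma_1 \oplus \sigma_2$, inductivity of $L$-factors gives $L(1/2, \Pi \otimes \mathrm{ind}_E^F(\chi^{-1})) = \prod_i L(1/2, BC_E(\Pi_i) \otimes \chi^{-1})$, which yields the forward implication.

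For the converse, start from $L(1/2, \Pi \otimes \mathrm{ind}_E^F(\chi^{-1})) \neq 0$; this forces $L(1/2, BC_E(\Pi_i) \otimes \chi^{-1}) \neq 0$ for both $i$. Waldspurger's theorem then produces quaternion algebras $D_1, D_2$ over $F$ and an automorphic representation of $[D_1^* \times D_2^*]/\Delta\mathbb{G}_m$ with nonvanishing toral periods. Writing $D_1 \otimes_F D_2 \cong M_2(D)$ and transporting the tensor of the canonical involutions, one gets $\GSO_D(4) \cong [D_1^* \times D_2^*]/\Delta\mathbb{G}_m$; defining $\GSp_D(4)$ with this $D$ and theta-lifting, the unfolding above run in reverse shows the Bessel period of the lift $\tilde{\Pi}$ on $\GSp_D(4)(\A_F)$ is nonzero (in particular $\tilde{\Pi} \neq 0$), producing the required triple $(D, \Pi_D, \Psi_D)$.

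The main obstacle is the hypothesis that $\Pi$ be a theta lift from $\GSO(4)$, equivalently that its parameter split as $\sigma_1 \oplus \sigma_2$: a general globally generic cuspidal $\Pi$ of $\GSp_4(\A_F)$ has irreducible parameter, and no reduction to $\GL_2$ toral periods is then available, so those cases require genuinely different input (a relative trace formula in the spirit of Furusawa and Shalika, or Arthur-theoretic and endoscopic methods). Two further technical points must be watched even within the theta case: one has to control the local theta lifts and the Rallis inner product formula so that the global nonvanishing of $L$-values translates faithfully into nonvanishing of the global period; and the degenerate case $\Pi_1 \cong \Pi_2$, where one of the inner-form lifts vanishes, must be treated separately, mirroring the local fact that $\pi_3 = \pi_4 = 0$ when $\tau_1 = \tau_2$.
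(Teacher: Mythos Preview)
The statement you are asked to prove is presented in the paper as a \emph{conjecture}, not a theorem; the paper does not supply its own proof. Immediately after stating it, the authors remark that the conjecture for generic representations of $\PGSp_4$ has been proven by Ginzburg, Jiang and Rallis \cite{ginzburg}, using methods (automorphic descent and Rankin--Selberg integrals) entirely different from the theta-correspondence arguments of this paper.

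What the paper \emph{does} prove is Theorem \ref{thm1.5}, which is precisely the special case you treat: $\Pi$ a theta lift from $\GSO(4)$, so that the parameter decomposes as $\sigma_1 \oplus \sigma_2$. Your unfolding of the Bessel period of $\theta^0(f;\varphi)$ into the period $\Lambda_\mu(f,\chi)$ over $\G[\SO(W_1^\perp)\times\SO(W_1)]$, followed by the identification of that group with $[E^*\times E^*]/\Delta F^*$ via the map $(x,y)\mapsto(xy,x\bar y)$ and the factorisation into two toral periods, is exactly the argument the paper gives in its global section leading to Theorem \ref{thm1.5}. You have also correctly reproduced the converse step, using Waldspurger to pick out $D_1,D_2$ and then $D$ via $D_1\otimes D_2\cong M_2(D)$.

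Your concluding paragraph is accurate: you correctly identify that a general globally generic $\Pi$ with irreducible parameter lies outside the reach of this method, and that something like a relative trace formula or the Ginzburg--Jiang--Rallis approach is needed. So there is no genuine error in your proposal, but you should be aware that you have reproduced the paper's Theorem \ref{thm1.5} rather than a proof of the full conjecture, which the paper does not claim to supply.
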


\begin{rem} The global Gross-Prasad conjecture
for Bessel models for generic representations of $PGSp_4$ has been
proven by Ginzburg, Jiang and Rallis in \cite{ginzburg}.
\end{rem}

We now observe that B\"ocherer's conjecture is a refinement of the
global Gross-Prasad conjecture in our context. In order to see this,
we recall that there is a a result of Sugano according to which for
the torus $T_D$ defined by ${\Bbb Q}(\sqrt{-D})$, the period
integral
$$\int_{\A_{\Bbb Q}^\times R_D({\Bbb Q}) \backslash R_D(\A_{\Bbb Q})}
\Psi_D(r) \tau^{-1}(r) dr$$ is essentially $B_D(\Phi)$ of B\"oecherer's
conjecture. For a modern formulation of Sugano's result, see
\cite{Furusawa}.

\subsection{Ichino-Ikeda}
Generalizing Waldspurger, Ichino-Ikeda \cite{Ichino-Ikeda} 
have made a very precise 
conjecture about period integrals thus refining the Gross-Prasad
conjecture. The precise conjecture of Ichino-Ikeda is remarkable 
for its elegance and simplicity. In essence, what it says is that
when there are multiplicity one theorems, any two invariant linear
forms must be scalar multiples of each other. One of the invariant
linear forms that one takes is the period integral, and the other
is defined as a product of linear forms, $\ell = \otimes \ell_v$,
on $\pi = \otimes \pi_v$
through matrix coefficients of representations $\pi_v$ involved. 
Since most of these constructions are unique only up to isomorphism
(a scaling), one needs subtle care to come up with an expression
which is independent of all choices, and then one can meaningfully 
compare the two linear forms, and the suggestion of Ichino-Ikeda is that
with this care taken, the scaling is essentially the $L$-function
that appears in the Gross-Prasad conjecture besides some simple
factors. We make their suggestion precise in our context.

Let $\pi \cong \otimes \pi_v$ be a cuspidal automorphic representation
of $\G({\Bbb A})= \GSp_4({\Bbb A})$. Assume that $\pi$ is a unitary 
representation on which the hermitian form comes from a fixed realization 
of $\pi$ as a space of automorphic forms on $\G({\Bbb A})$. Assume that
$\pi_v$ also are unitary, and that the unitary structure on 
$\pi_v$ yields a unitary structure on $\pi = \otimes \pi_v$ which is the
same as the one coming from the automorphic realization of $\pi$.

Corresponding to an element $\varphi_v \in \pi_v$, 
define a matrix coefficient of $\pi_v$ by
$$\Phi_{\varphi_v,\varphi_v}(g) = <g\varphi_v,\varphi_v>.$$

Let $R$ be the Bessel subgroup of $G$ as in the previous sections, 
and let $\chi = \prod_v\chi_v$ be a character on $R({\Bbb A})$ 
as before. Define {\it local period integrals} to be,
$$I_v(\varphi_v,\chi_v) 
= \int_{k^*_v\backslash R(k_v)} \Phi_{
\varphi_v,\varphi_v}(r) \chi_v^{-1}(r) dr_v.$$
This requires fixing Haar measures $dr_v$ on $k^*_v\backslash R(k_v)$. We 
assume that they are arbitrarily fixed so that one can speak of
Haar measure on ${\Bbb A}_F^*\backslash R({\Bbb A}_F)$. Let 
$dr$ be the Tamagawa measure on ${\Bbb A}_F^*\backslash R({\Bbb A}_F)$. 
Define a constant $C_0$ by $dr = C_0 \prod_v{dr_v}$.
The following conjecture, besides possible inaccuracies, 
should be taken to be 
due to Ichino and Ikeda.

\begin{conj} With the notation as above, let    
$\varphi = \otimes \varphi_v \in \otimes \pi_v$ be a cusp form 
on $G({\Bbb A})$. Then
$$\frac{\left | \int_{{\Bbb A}_F^*R(F)\backslash R({\Bbb A}_F)} \varphi(r)
\chi^{-1}(r) dr \right |^2 }{ \int_{{\Bbb A}_F^*\G(F)\backslash 
\G({\Bbb A}_F)} |\varphi(g)|^2 dg  } = C_0 2^\beta \zeta^S(2) \zeta^S(4) P^S(\pi, \chi^{-1},
\frac{1}{2}) \prod_{v \in S}\frac{ 
\left |I_v(\varphi_v,\chi_v) \right |^2}{<\varphi_v,\varphi_v>},$$
where $S$ is a finite set of finite primes, the superscript such as 
in $\zeta^S$ means removal of the Euler factors at places in $S$,
 $dg$ is the Tamagawa measure 
on ${\Bbb A}_F^*G(F)\backslash G({\Bbb A}_F)$, and the constant $C_0$ has been
defined earlier,  $\beta$ is a non-negative integer $\leq 2$,
and $$P^S(\pi, \chi^{-1},s) = \prod_{v \not \in S} 
\frac{L_v(s,\pi \otimes {\rm Ind} \chi^{-1})}{ L_v(s+\frac{1}{2}, \pi, Ad) 
L_v(s+\frac{1}{2}, \chi)}.$$ 
\end{conj}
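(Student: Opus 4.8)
The plan is to deduce this refined formula from the known Ichino--Ikeda (equivalently, refined Waldspurger) formula for toral periods on $\GL_2$, by running the theta correspondence with $\GO(4)$ exactly as it was used to prove Theorem~\ref{thm1.5}. For the representations treated in this paper, write $\pi = \Theta(\tau)$ with $\tau = \tau_1 \boxtimes \tau_2$ an automorphic representation of $\GSO_4(\A)$ (for $\pi$ not of this shape --- e.g.\ the exceptional Klingen subquotients, or the residue characteristic two case --- the conjecture would need a different route and is not attempted here). By the seesaw identity recorded just before Theorem~\ref{thm1.5}, for $f = f_1 \boxtimes f_2 \in \tau$ and a suitable $\varphi \in {\mathcal S}((V\otimes W_1^\vee)(\A))$, the global Bessel period $B_{\chi,\mu}(\theta^0(f;\varphi))$ equals the integral $\Lambda_\mu(f,\chi)$, which in turn factors as a product of two $\GL_2$ toral integrals over $E^*\A_F^*\backslash\A_E^*$ against $f_1$ and $f_2$; hence the numerator $\bigl|\int_{\A_F^*R(F)\backslash R(\A)}\varphi(r)\chi^{-1}(r)\,dr\bigr|^2$ of the conjecture becomes a product of the squares of two such toral periods.

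Next I would treat the denominator. By the Rallis inner product formula for the pair $(\GSp_4,\GO_4)$ (doubling for $\GSp_4$ against the standard degree-$4$ $L$-function of $\GSO_4$, which is $L(s,\tau_1\times\tau_2)$), the Petersson norm $\int_{\A_F^*\G(F)\backslash\G(\A)}|\theta^0(f;\varphi)(g)|^2\,dg$ equals $\langle f,f\rangle$ times a value of $L(s,\tau_1\times\tau_2)$ at the relevant point and an explicit product of local doubling zeta integrals attached to $\varphi$. Since the local $L$-factors on the right-hand side of the conjecture, $L_v(\tfrac12,\pi\otimes\mathrm{Ind}\,\chi^{-1})$ and $L_v(1,\pi,\mathrm{Ad})$, decompose through the parameter $\sigma_1\oplus\sigma_2$ of $\pi$ into the $\GL_2$ factors $L_v(\tfrac12,BC_E(\tau_i)\otimes\chi^{-1})$ and $L_v(1,\tau_i,\mathrm{Ad})L_v(1,\tau_1\times\tau_2)$, matching the right-hand side is forced. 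This reduces the whole statement to two inputs: (i) the refined Waldspurger formula for each toral period $\int f_i(t)\chi^{-1}(t)\,dt$ on $\GL_2(\A_F)$ or on $D_i^*(\A_F)$, computing $|\int f_i|^2/\langle f_i,f_i\rangle$ as a product of local toral integrals times $L(\tfrac12,BC_E(\tau_i)\otimes\chi^{-1})/\bigl(L(1,\tau_i,\mathrm{Ad})L(1,\eta_{E/F})\bigr)$ up to explicit constants; and (ii) a purely local identity matching, at each $v$, the local Bessel integral $I_v(\varphi_v,\chi_v)/\langle\varphi_v,\varphi_v\rangle$ on $\GSp_4(k_v)$ with the product of the two local toral integrals on $\GL_2(k_v)$ supplied by the local theta lift together with the local doubling zeta integral.

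The technical heart, and the step I expect to be the main obstacle, is (ii): the local seesaw identity \emph{with constants}. One must show that the local theta integral defining $\Theta$ carries the local Bessel functional on $\pi_v$ to the product of the local toral functionals on $\tau_{1,v}$ and $\tau_{2,v}$ compatibly with the chosen unitary structures, and that the ``defect'' between the two sides is precisely the local doubling zeta integral, so that after dividing by the norms one recovers exactly $P_v(\pi,\chi^{-1},\tfrac12)$ together with the normalising factors $\zeta_v(2)\zeta_v(4)$. This is where the archimedean normalisations, the bounded integer $\beta\le 2$, and the Tamagawa-measure constant $C_0$ all enter, and where one needs the non-vanishing of the relevant local integrals --- which is exactly the content of the local Gross--Prasad theorem (Theorem~\ref{thm1.4}), already available here. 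Once (ii) is in place, assembling the global identity from the local ones, via the factorisation of Theorem~\ref{thm1.5} and the Rallis inner product formula, is bookkeeping: keeping track of the partial $L$-functions, the Tamagawa measures, and the finitely many places in $S$.
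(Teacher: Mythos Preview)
The statement you are trying to prove is labelled and treated in the paper as a \emph{conjecture}, not a theorem: the paper offers no proof of it, and indeed the remark immediately following it says explicitly that the authors ``have neither proved the convergence, nor done this calculation here'' even for the unramified local integral. So there is no proof in the paper to compare your proposal against; what you have written is a sketch of a possible attack on an open (at the time of the paper) problem, not a reconstruction of an argument the paper contains.

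As for the content of your sketch: the strategy of pulling the Bessel period back through the theta lift to a product of two $\GL_2$ toral periods, applying refined Waldspurger to each, and using the Rallis inner product formula to handle the Petersson norm is a reasonable line of attack and is in the spirit of how such results have since been approached for endoscopic or theta-lifted representations. But you should be aware that your outline does not constitute a proof even for the theta-lifted case. Step (ii), which you yourself flag as the main obstacle, is not a formality: proving the precise local identity between $I_v(\varphi_v,\chi_v)/\langle\varphi_v,\varphi_v\rangle$ and the product of local toral integrals times the local doubling zeta integral, with all constants tracked, is a substantial computation that the paper nowhere carries out, and the Rallis inner product formula for similitude pairs $(\GSp_4,\GO_4)$ with the exact normalisations you need is itself delicate. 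Moreover your argument only addresses $\pi$ arising as $\Theta(\tau_1\boxtimes\tau_2)$, whereas the conjecture is stated for arbitrary cuspidal $\pi$ on $\GSp_4$; you acknowledge this, but it means that even a completed version of your sketch would establish only a special case.
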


\begin{rem} One of the crucial inputs for the Ichino-Ikeda formulation
is the convergence of the local period integral, say for unramified
representations whose Satake parameters belong to certain half-space,
and its calculation; in our case,  we should be getting
$$I_v(\varphi_v,\chi_v) = \zeta_v(2)\zeta_v(4)\frac{L_v(s,\pi \otimes {\rm Ind} \chi^{-1})}{ L_v(s+\frac{1}{2}, \pi, Ad) 
L_v(s+\frac{1}{2}, \chi)},$$
but we have neither proved the convergence, nor done this calculation here.

\end{rem}
 
\begin{rem}
To conclude B\"ocherer's conjecture from Ichino-Ikeda, one will need to 
prove that for unramified representations $\pi_v$ of $\GSp_4(k_v)$, with 
$\varphi_v$  a spherical vector, $I_v(\varphi_v,\chi_v)$ has a simple
dependence on the subgroup $K_v^*$, and the character $\chi_v$ on it.
\end{rem}

\end{document}